\definecolor{darkgreen}{rgb}{0.0, 0.6, 0.13}
\newcommand{\legendre}[2]{%
\genfrac(){}{}{#1}{#2}
}
\newtheorem{thm}{Theorem}[section]
\newtheorem{conj}{Conjecture}[section]
 \newtheorem{cor}[thm]{Corollary}
 \newtheorem{lem}[thm]{Lemma}
 \newtheorem{prop}[thm]{Proposition}
 \theoremstyle{definition}
 \theoremstyle{remark}
 \newtheorem{rem}[thm]{Remark}
 \numberwithin{equation}{section}
\def\sqw{\hbox{\rlap{\leavevmode\raise.3ex\hbox{$\sqcap$}}$%
\sqcup$}}
\def\findem{\ifmmode\sqw\else{\ifhmode\unskip\fi\nobreak\hfil
\penalty50\hskip1em\null\nobreak\hfil\sqw
\parfillskip=0pt\finalhyphendemerits=0\endgraf}\fi}
\newcommand{\R}{\mathbb R}
\newcommand{\Q}{\mathbb Q}
\newcommand{\N}{\mathbb N}
\newcommand{\Z}{\mathbb Z}
\newcommand{\T}{{\bf T}}
\begin{document}

\title[Strichartz estimates for the Schr\"odinger equation on non-rectangular tori]{Strichartz estimates for the Schr\"odinger equation on non-rectangular two-dimensional tori}

\author{Yu Deng}
\address[Y.\@ Deng]{Courant Institute of Mathematical Sciences\\ New York University\\ 251 Mercer Street\\ New York, N.Y. 10012-1185\\ USA}
\email{yudeng@cims.nyu.edu}

\author{Pierre Germain}
\address[P.\@ Germain]{Courant Institute of Mathematical Sciences\\ New York University\\ 251 Mercer Street\\ New York, N.Y. 10012-1185\\ USA}
\email{pgermain@cims.nyu.edu}

\author{Larry Guth}
\address[L. Guth]{Massachusetts Institute of Technology \\ Department of Mathematics \\
 77 Massachusetts Avenue, Cambridge, MA 02139-4307\\ USA}
\email{larry.guth.work@gmail.com}

\author{Simon L.\@ Rydin Myerson}
\address[S.\@ L.\@ Rydin Myerson]{Mathematisches Institut\\Georg-August-Universit\"at G\"ottingen\\Bunsenstra\ss e 3-5\\D-37073 G\"ottingen\\Deutschland
}
\email{myerson@goettingen.de}

\maketitle

\begin{abstract}We propose a conjecture for long time Strichartz estimates on generic (non-rectangular) flat tori. We proceed to partially prove it in dimension 2. Our arguments involve on the one hand Weyl bounds; and on the other hands bounds on the number of solutions of Diophantine problems.\end{abstract}

\section{Introduction}

\subsection{A general question} 

It is a classical result that, if $v$ is a solution of the linear Schr\"odinger equation on $\mathbb{R}^d$ with data $v_0$, 
$$
i \partial_t v -\Delta_M v = 0,\qquad
v(0,x)=v_0(x),
$$
and if furthermore $P_N$ is a projector on frequencies $\lesssim N$, then
\begin{equation}
\label{StrichartzRd}
\| P_N v \|_{L^p(\mathbb{R} \times \mathbb{R}^d)} \lesssim N^s \| v_0\|_{L^2(\mathbb{R}^d)} \qquad \mbox{with $s= \max \left( 0,\frac{d}{2} - \frac{d+2}{p} \right)$}
\end{equation}
{(Strichartz estimates for the linear Schr\"odinger equation are usually stated with different space and time Lebesgue indices, see~\cite{Tao}; the above inequality follows from these through the Sobolev embedding theorem).}

\medskip

Can this inequality be extended to compact manifolds? Let $M$ be a compact Riemannian manifold without boundary, let $v_0\in L^2(M)$ and let $v$ be the solution  to the linear Schr\"odinger equation 
\begin{equation}\label{mock-duck}
i \partial_t v -\Delta_M v = 0,\qquad
v(0,x)=v_0(x)
\end{equation}
where $\Delta_M$ is the Laplace-Beltrami operator on $M$. 
Finally, let $P_N$ be a projector on eigenmodes of the Laplacian $< N^2$, for instance $P_N = \chi \left( \frac{-\Delta_M}{N^2} \right)$, where $\chi$ is a smooth cutoff function.

In order to extend the inequality~\eqref{StrichartzRd}, a natural question is to determine the best constant $C(T,p,N,M)$ in
\begin{equation}
\label{thequestion}
\| P_N v \|_{L^p([0,T] \times M)} \leq C(T,p,N,M) \| v_0 \|_{L^2(M)}.
\end{equation}
In cases where such a problem could be solved, experience shows that the dependence of $C$ on $T,p,N$ is in general of the type $N^{s_1(p)} T^{s_2(p)}$, or a sum of such summands, up to possible subpolynomial factors $O_\epsilon(N^{\epsilon}T^\epsilon)$.

Writing $\lVert v_0 \rVert_{H^s}$ for the Sobolev norm (defined through fractional powers of $\Delta_M$), it is essentially equivalent to examine inequalities of the type $\| P_N v \|_{L^p([0,T] \times M)} \leq C \| v_0 \|_{H^s(M)}$.

\subsection{Background for $T=1$}

Burq, G\'erard and Tzvetkov in~\cite{BGT}, showed that, for general $d$-dimensional compact Riemannian manifolds,
$$
\| P_N v \|_{L^p([0,1] \times M)} \lesssim N^{s(p)} \| v_0 \|_{L^2(M)} \qquad \mbox{with $s(p) = \max\left( \frac{d}{4} - \frac{d}{2p}, \frac{d}{2} - \frac{d+1}{p} \right)$}.
$$
Notice that these authors actually proved more general inequalities {(Theorem 1 in ~\cite{BGT})}, allowing for a different Lebesgue index in time and space. As a consequence of the above estimate, they 
deduce existence results for certain nonlinear Schr\"odinger equations in two and three dimensions.

Is the above estimate optimal for some $M$? This seems unclear. In the case of the sphere (or more generally of Zoll manifolds), which might lead to the largest constants $C(T,p,N,M)$ (in particular since it has very concentrated eigenmodes of the Laplacian), the above authors show {(Theorem 4 in~\cite{BGT})} that the above estimate can be improved, if $p=4$, to
$$
s(4) = \max\left( \frac{1}{8},\frac{d}{4}-\frac{1}{2}\right);
$$
this is furthermore optimal if $d \geq 3$.

The best-studied situation is when $M$ is a flat torus $\R^d/\Lambda$ for some rank $d$ lattice $\Lambda$. In the foundational paper of Bourgain~\cite{B1}, Strichartz estimates lead to well-posedness results for the nonlinear Schr\"odinger equation on the square torus $\mathbb{T}^d=\R^d/\Z^d$. This led to much subsequent work on the nonlinear problem, see for example~\cite{HTT} and~\cite{IP} for the energy-critical case. Coming back to (linear) Strichartz estimates, Bourgain stated a conjecture~\cite[(3.2)-(3.4), cf.\@ (1.7)]{B1} for these Strichartz estimates which was  pursued in a number of works~\cite{B2,B3,GOW}, culminating in  Theorem~2.2 of Bourgain and Demeter~\cite{BD1} which delivers an essentially sharp range 
\begin{equation}\label{resplendent-quetzal}
\| P_N v \|_{L^p([0,1] \times M)} \lesssim N^{s(p)} \| v_0 \|_{L^2(M)} \qquad \mbox{with $s(p) > \max\left( 0, \frac{d}{2} - \frac{d+2}{p} \right)$}.
\end{equation}
for general tori. 

When $M$ is a flat torus and $T$ is large, we study refinements of \eqref{resplendent-quetzal} and their relation to the geometry of $M$. Note that this case is significant in connection with \emph{weak turbulence} for nonlinear Schr\"odinger equations~\cite{DG,D}. 
We conjecture that the optimal bound is sensitive to the geometry of the torus $M$, depending on whether $M$ is rectangular ($M=\R/\theta_i\Z$) and also on whether the spectrum of $\Delta_M$ the torus is rational, in the sense that all ratios of eigenvalues are rational numbers. We prove a lower bound supporting this conjecture, and in the two-dimensional case we give partial results toward it.

For the rest of this paper we restrict to the case when $M$ is a flat torus.

\subsection{Strichartz estimates on tori for $T \geq 1$}  Define a torus
$$
M =
M_{e_1 \dots e_d} = \mathbb{R}^d / (\mathbb{Z} e_1 + \dots + \mathbb{Z} e_d),
$$
where $e_1, \dots, e_d$ are {linearly independent} vectors of $\mathbb{R}^d$. As above, we ask for the best constant $C(p,T,N,M)$ in
\begin{equation}\label{red-ruffed-lemur}
\| e^{it\Delta_M} f \|_{L^p([0,T] \times M)} \leq C(p,T,N,M)  \| f \|_{L^2 (M)},
\end{equation}
where  $T \geq 1$ and the Fourier transform $\hat{f}$ is supported on the ball  $B(0,N)$.

The three first authors~\cite{DGG} investigated the case $T \geq 1$ in the ``rectangular" case where $(e_1,\dots,e_d)$ form an orthogonal basis; we focus in the present article on the case where $(e_1,\dots,e_d)$ are in general position.

We change variables as follows. If $x\in M$ then write
$$
x = y_1 e_1 + \dots + y_d e_d
$$
(so that now $y \in \mathbb{T}^d = \mathbb{R}^d / \mathbb{Z}^d$) and expand in Fourier series
$$
f(x) = \sum \widehat{f}_k e^{2\pi i k \cdot y}.
$$
Then $\nabla_x = A \nabla_y$ for a matrix $A$, and
$$
\Delta_M f(x) = - 4 \pi^2 \sum_{k \in \mathbb{Z}^d} |Ak|^2  \widehat{f}_k e^{2\pi i k \cdot y}.
$$
In other words, it suffices to consider the inequality
$$
\| e^{it\widetilde{\Delta}} f \|_{L^p([0,T] \times \T^d)} \leq C(p,T,N,M) \| f \|_{L^2 (\T^d)},
$$
where 
$$
\widetilde{\Delta} = \frac{1}{2\pi} \sum \alpha_{ij} \partial_i \partial_j, \qquad  \mbox{for a symmetric, positive definite, matrix $(\alpha_{ij})= 2\pi A^TA$}. 
$$
Without loss of generality we set
$
\alpha_{11} = 1.
$

By dividing the range for $t$ into pieces of length 1, the result of Bourgain and Demeter referred to around \eqref{resplendent-quetzal} implies that for $p\geq 2$ and $\epsilon>0$ we have
\begin{equation}\label{adelomyrmex-quetzal}
\| e^{it\widetilde{\Delta}} f \|_{L^p([0,T] \times \T^d)} \lesssim_{p,(\alpha_{ij}),\epsilon} T^{1/p}N^{\epsilon}(1+N^{\frac{d}{2}-\frac{d+2}{p}}) \| f \|_{L^2 (\T^d)}.
\end{equation}
In this formulation, we ask whether the factor $T^{1/p}$ can be improved for large $T$.

If the $\alpha_{ij}$ are rational (equivalently: if the eigenvalues of $\Delta_M$ all lie in $\Q$) then   the operator $ e^{it\widetilde{\Delta}}$ is periodic, and consequently $T^{1/p}$ is the correct growth rate as $T\to \infty$. If the $\alpha_{ij}$ were unusually well approximated by rational numbers, for example if they were Liouville numbers, then one would expect a similar behavior.

If the $\alpha_{ij}$ are not well approximable by rational numbers then one expects $C$ to grow more slowly than $T^{1/p}$. In~\cite{DGG} the first three authors conjecture that if $(\alpha_{ij})$ is diagonal then
$$
\| e^{it\widetilde{\Delta}} f \|_{L^p([0,T] \times \T^d)} \lesssim_{p,(\alpha_{ij}),\epsilon} N^{\epsilon}(T^{1/p}+N^{\frac{d}{2}-\frac{d+2}{p}}+T^{1/p}N^{\frac{d}{2}-\frac{3d}{p}}) \| f \|_{L^2(\T^d)},
$$
provided $(\alpha_{ij})$ is \emph{generic}; that is provided $(\alpha_{ij})$ is lies outside an exceptional set of diagonal matrices $E$ which has $d$-dimensional Lebesgue measure zero. The following conjecture proposes a stronger result for general, not necessarily diagonal matrices $(\alpha_{ij})$.

Let $E$ be the set of all symmetric $(\alpha_{ij})$ such that $-2 \leq \alpha_{ij} \leq 2$, and with smallest absolute eigenvalue $|\lambda_1|>1$. In the remainder of this paper we say that a statement holds for generic $(\alpha_{ij})$ if it is true for all $(\alpha_{ij}) \in E\setminus F$, where $F$ has Lebesgue measure zero. Implicit constants might of course not be uniform in $(\alpha_{ij})$.

\begin{conj} \label{theconjecture}
	For $p\geq 2$, generic $(\alpha_{ij})$, and  any $\epsilon > 0$,
	$$
		\| e^{it {\widetilde{\Delta}}} f \|_{L^p ([0,T]  \times \mathbb{T}^d)} \lesssim N^\epsilon \left[ N^{\frac{d}{2} - \frac{d+2}{p}} + T^{\frac{1}{p}} \sum_{n=0}^d N^{\frac{n}{2} - \frac{n^2 + 2n}{p}}  \right] \| f \|_{L^2(\mathbb{T}^d)},
	$$
provided $\operatorname{Supp} \widehat{f} \subset B(0,N)$.
\end{conj}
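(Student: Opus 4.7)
The natural approach is to establish the estimate at even integer exponents $p = 2q$ via a counting argument and then interpolate. Writing $u(t,x) = \sum_{k \in B(0,N) \cap \mathbb{Z}^d} \widehat{f}_k\, e^{2\pi i k\cdot x - itQ(k)}$ with $Q(k) = \sum_{i,j} \alpha_{ij}k_ik_j$, expanding $|u|^{2q}$ and applying Parseval on $\mathbb{T}^d$ together with a smooth cutoff in $t$ reduces the $L^{2q}([0,T]\times\mathbb{T}^d)$ norm to a weighted count of $2q$-tuples in $(B(0,N)\cap\mathbb{Z}^d)^{2q}$ satisfying
$$k_1+\dots+k_q = m_1+\dots+m_q, \qquad \bigl|Q(k_1)+\dots+Q(k_q)-Q(m_1)-\dots-Q(m_q)\bigr| \lesssim 1/T.$$
By duality and dyadic decomposition it suffices to extremize over $f$ with $\widehat{f}$ essentially the indicator of $B(0,N)\cap\mathbb{Z}^d$.

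The structure $\sum_{n=0}^d N^{n/2-(n^2+2n)/p}$ in the conjecture points to stratifying solutions by the dimension $n$ of the smallest rational affine subspace $V \subset \mathbb{R}^d$ containing the vectors $k_j - m_{\sigma(j)}$ for some permutation $\sigma$. When this dimension is $n$, the linear constraint removes $n$ degrees of freedom, and the near-equality on $Q$ removes roughly one further degree of freedom at scale $1/T$, so the diagonal contributions should be of order $T\,N^{nq}$ and the non-diagonal ones of order $N^{n(2q-1)-2}$; after normalization by $\|f\|_{L^2}^{2q}$ and extraction of a $2q$-th root these reassemble into the predicted summands $T^{1/p}N^{n/2-(n^2+2n)/p}$. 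Genericity of $(\alpha_{ij})$ is used to guarantee two things: Weyl-type cancellation for the exponential sums $\sum_k e^{2\pi i t Q(k)}$ at all relevant scales of $t$, and non-degeneracy of $Q$ restricted to every rational sublattice of $\mathbb{Z}^d$. A standard metric number theory argument confines the failure of either of these to a Lebesgue-null subset of parameters $(\alpha_{ij}) \in E$. Interpolating the resulting $L^{2q}$ bounds with Bourgain--Demeter \eqref{adelomyrmex-quetzal}, which supplies the $T$-independent summand $N^{d/2-(d+2)/p}$, and with the trivial $L^2$ estimate, yields the full conjectured range for $p \geq 2$.

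The main obstacle is the genuinely multidimensional nature of $Q$ when $(\alpha_{ij})$ is non-diagonal: unlike in the rectangular case treated in \cite{DGG}, the counting does not factorize across coordinates, so one must bound non-separable Weyl sums and control mixed pair correlations of the values of $Q$ on arbitrary rational sublattices of $\mathbb{Z}^d$. In dimension two, van der Corput--style bounds combined with careful metric arguments on the space of symmetric forms $(\alpha_{ij})$ should handle the $n=1$ stratum, which is the genuine novelty beyond \cite{DGG}; however, getting both the correct dependence on $T$ and a loss no worse than $N^\epsilon$ uniformly in all strata is delicate, and the range of $p$ over which the Weyl bounds are strong enough appears to be the limiting factor. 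This likely accounts for the partial nature of the results announced in the abstract, even in $d=2$.
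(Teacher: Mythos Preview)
The statement you are attempting to prove is a \emph{conjecture}: the paper does not prove it, and indeed announces only partial results toward it in dimension $2$ (Theorems~\ref{thmweyl} and~\ref{thmp8}). What the paper offers for the conjecture itself is the heuristic of Section~\ref{conjecture-justification}, which produces the predicted \emph{lower} bound by testing against explicit focusing data on $n$-dimensional subtori, together with the rigorous lower bound of Remark~\ref{fulvous-tree-duck} for even $p$. There is no upper-bound proof to compare against.

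Your proposal is, correspondingly, not a proof but a strategy sketch, and to your credit you say as much in the final paragraph. Two concrete gaps are worth naming. First, the stratification ``by the dimension $n$ of the smallest rational affine subspace containing $k_j-m_{\sigma(j)}$'' is not well-posed: the choice of $\sigma$ is unspecified, there is no mechanism offered for summing over the infinitely many rational subspaces of each dimension, and the heuristic dimension count you give does not match the exponent $n^2+2n$ in the conjecture (that exponent arises because on an $n$-dimensional subtorus one has $\binom{n+1}{2}+n$ independent quadratic and linear constraints, not the $n+1$ your sketch suggests). Second, interpolation between even integers does not automatically preserve a bound of the form $A+B$ with different $(N,T)$-exponents in each summand; one needs either a level-set argument or to prove the bound at every $p$ directly.

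For comparison, the paper's actual partial proofs in $d=2$ do not proceed by your stratification at all. For $p\geq 8$ (Theorem~\ref{thmp8}) the counting is carried out via Pall's exact formula for representations by ternary quadratic forms, reducing the problem to delicate Diophantine estimates specific to $d=2$; for $4<p$ (Theorem~\ref{thmweyl}) the argument goes through pointwise and $L^4$-in-time bounds on the Weyl sum $K_N$ obtained by Davenport's geometry-of-numbers method, combined with a localized $TT^*$ argument. Neither route resembles a subspace stratification, and the gap between Theorem~\ref{thmweyl} and the conjecture for $4<p<8$ remains open.
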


This can also be written
\begin{multline*}
\| e^{it {\widetilde{\Delta}}} f \|_{L^p ([0,T] \times \mathbb{T}^d)}
\lesssim
\\
N^\epsilon \| f \|_{L^2(\mathbb{T}^d)}
\left\{
\begin{array}{ll}
T^{\frac{1}{p}} & \mbox{if $2\leq p\leq \frac{2(d+2)}{d}$} \\
N^{\frac{d}{2} - \frac{d+2}{p}} + T^{\frac{1}{p}} & \mbox{if $\frac{2(d+2)}{d}<p\leq 6$} \\
N^{\frac{d}{2} - \frac{d+2}{p}} + T^{\frac{1}{p}} N^{\frac{n}{2} - \frac{n^2 + 2n}{p}} & \mbox{if $4n+2<p\leq 4n+6$, with $n \in \{1,\dots,d-1\}$} \\
N^{\frac{d}{2} - \frac{d+2}{p}}+ T^{\frac{1}{p}} N^{\frac{d}{2} - \frac{d^2 + 2d}{p}}  & \mbox{if $p > 4d+2$}
\end{array}
\right.
\end{multline*}

The heuristic behind this conjecture, and some supporting lower bounds, are explained in section~\ref{conjecture-justification} below. In particular, it is seen there that the saving over the rectangular case is due to the longer \emph{refocusing time}, which more closely reflects the expected behavior on a ``typical" nonpositively curved manifold.

We now focus on the case of dimension $d=2$, in which case the above conjecture becomes:
\begin{itemize}
\item For $p \leq 4$, bound $\sim T^{\frac{1}{p}}$.
\item For $4<p\leq 6$, bound $\sim T^{\frac{1}{p}}+N^{1-\frac{4}{p}}$ (with a cutoff at $T \sim N^{p-4}$)
\item For $6<p\leq 10$, bound $\sim T^{\frac{1}{p}} N^{\frac{1}{2}-\frac{3}{p}} + N^{1-\frac{4}{p}}$ (with a cutoff at $T \sim N^{\frac{p}{2}-1}$)
\item For $p>10$, bound $\sim N^{1-\frac{8}{p}} T^{\frac{1}{p}} + N^{1-\frac{4}{p}}$ (with a cutoff at $T \sim N^{4}$).
\end{itemize}

From \eqref{adelomyrmex-quetzal} one immediately obtains the conjecture for $p<4$. Our results for the remaining cases are as follows. 

\begin{thm}
\label{thmweyl}
If $d=2$, for generic $(\alpha_{ij})$ and for $p>4$,
$$
\| e^{it\widetilde{\Delta}} f \|_{L^p([0,T] \times \mathbb{T}^2)}  \lesssim_{\alpha, \epsilon}
N^\epsilon \| f \|_{L^2(\mathbb{T}^d)} 
\left[ N^{1-\frac{4}{p}} + T^{\frac{1}{p}} N^{\frac{2}{3}(1-\frac{4}{p})} \right],
$$
provided $\operatorname{Supp} \widehat{f} \subset B(0,N)$.\end{thm}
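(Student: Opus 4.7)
The plan is to use a $TT^\ast$-style reduction followed by a Weyl bound on the kernel. Introduce
$$K(t,x)=\sum_{|k|\leq N}e^{i(k\cdot x+tQ(k))},\qquad Q(k)=\sum_{i,j}\alpha_{ij}k_ik_j,$$
so that $u(t,x)=(K(t,\cdot)\ast_x f)(x)$, and Young's inequality in $x$ gives
$$\|u\|_{L^p([0,T]\times\T^2)}^2 \lesssim \|K\|_{L^{p/2}([0,T]\times\T^2)}\|f\|_{L^2}^2.$$
It thus suffices to bound $\|K\|_{L^{p/2}}$. I would split $[0,T]=[0,1]\cup[1,T]$: the short-time piece is handled by the Bourgain--Demeter estimate \eqref{adelomyrmex-quetzal}, which produces the $N^{1-4/p}$ term without exploiting the non-rectangular structure.

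For $t\in[1,T]$, I would apply a 2D Weyl inequality to the quadratic exponential sum $K(t,\cdot)$. Weyl differencing in the lattice variable $k$ reduces $|K(t,x)|^2$ to a sum over $h\in[-N,N]^2$ of products $\min(N,\|2t\alpha_{11}h_1+t\alpha_{12}h_2\|^{-1})\cdot\min(N,\|t\alpha_{12}h_1+2t\alpha_{22}h_2\|^{-1})$. For $(\alpha_{ij})$ generic in $E$, the triple $(t\alpha_{11},t\alpha_{12},t\alpha_{22})$ equidistributes modulo $\Z^3$ as $t\in[1,T]$, and combining this equidistribution with the Weyl estimate should yield, after integrating in $t$, a bound of the form $T^{1/p}N^{\frac{2}{3}(1-\frac{4}{p})+\epsilon}$ on the $[1,T]$ contribution.

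The factor $\frac{2}{3}$ reflects the three independent Diophantine parameters $\alpha_{11},\alpha_{12},\alpha_{22}$ together with simultaneous rational approximation in $\R^3$: Dirichlet's theorem gives the exponent $\frac{1}{3}$ per coordinate, so the Weyl saving $1-\frac{1}{3}=\frac{2}{3}$ propagates through the averaging. A structural simplification is that it suffices to establish the bound at a single critical endpoint, for instance $p_0=10$ (where the target reads $\|u\|_{L^{10}}\lesssim T^{1/10}N^{2/5+\epsilon}$), and then interpolate with the $p=4$ Bourgain--Demeter endpoint $T^{1/4}N^{\epsilon}$ to cover $4<p\leq 10$; for $p>10$, the Weyl moment estimate has to be re-run directly, or the interpolation extended to a higher endpoint.

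The main obstacle is that the pointwise inequality $\|K(t,\cdot)\|_{L^\infty(\T^2)}\geq \|K(t,\cdot)\|_{L^2(\T^2)}=N$ holds for \emph{every} $t$, so the desired saving cannot come from a pointwise Weyl bound but must emerge from an averaged $L^{p/2}$-in-$x$ estimate, exploiting that $|K(t,\cdot)|$ is concentrated near the caustic of the continuous Schr\"odinger kernel. A delicate further step is handling the exceptional $t\in[1,T]$ for which $(t\alpha_{ij})\bmod\Z^3$ is too close to a rational point of small denominator: this set must either be absorbed into a local Bourgain--Demeter estimate on a short subinterval (contributing to the $N^{1-4/p}$ term), or shown by a metric Diophantine counting argument to have small enough measure not to disrupt the averaged bound.
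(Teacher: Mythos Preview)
Your reduction via Young's inequality is the fatal gap. You write $\|u\|_{L^p}^2 \lesssim \|K\|_{L^{p/2}_{t,x}} \|f\|_{L^2}^2$, which is correct as an inequality, but it is far too lossy to reach the target. Since $\|K(t,\cdot)\|_{L^2_x(\T^2)}^2 = \#\{|k|\le N\}\sim N^2$ and $|\T^2|=1$, H\"older gives $\|K(t,\cdot)\|_{L^{p/2}_x}\ge \|K(t,\cdot)\|_{L^2_x}\sim N$ for \emph{every} $t$ once $p\ge 4$. Hence $\|K\|_{L^{p/2}([1,T]\times\T^2)}\gtrsim T^{2/p}N$, and the best your method can ever yield on the long-time piece is $\|u\|_{L^p}\lesssim T^{1/p}N^{1/2}$. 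This beats the target $T^{1/p}N^{\frac{2}{3}(1-4/p)}$ only when $p\ge 16$; for $4<p<16$ the approach is blocked by the $L^2$ lower bound, not by any failure of equidistribution. Your proposed remedy---seeking savings in an ``averaged $L^{p/2}$-in-$x$'' estimate via caustic concentration---cannot help, because the obstruction is the $L^2_x$ norm itself, which no concentration argument can reduce.

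The paper avoids Young entirely and instead runs Bourgain's localized $TT^\ast$ / level-set argument. One splits $\phi(t/T)K_N = J_1+J_2+J_3$ and bounds $\lambda^2|E_\lambda|^2$ by $\|\widehat{J_1}\|_{L^\infty}|E_\lambda| + (\|J_2\|_{L^\infty}+\|J_3\|_{L^\infty})|E_\lambda|^2$; the point is that the $\widehat{L^\infty}$ piece is controlled by the \emph{time support} of $J_1$, not by any $L^{p/2}$ norm of $K$. A further refinement $J_3=J_3'+J_3''$ using the super-level set $S_\mu=\{t:\sup_x|K_N|>\mu\}$ feeds in two Weyl inputs, both in $L^\infty_x$: the pointwise bound $\sup_x|K_N(t,x)|\preceq N^{4/3}\langle t\rangle^{1/6}$ and the moment bound $\int_1^T\sup_x|K_N|^4\,dt\preceq N^4T$. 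These are obtained not by equidistribution of $t(\alpha_{ij})$ modulo $\Z^3$, but by Weyl differencing followed by a geometry-of-numbers argument (Minkowski successive minima, Davenport's lemma) and a Borel--Cantelli argument over the matrix coefficients. The ``Dirichlet exponent $1/3$'' heuristic you sketch does not produce these bounds; in particular the pointwise $N^{4/3}$ comes from controlling the product $M_1^t M_2^t$ of successive minima of a certain four-dimensional lattice, which is a genuinely different mechanism.
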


The theorem, which is proved in sections~\ref{section2} and~\ref{section3}, thus misses the conjecture {when $4<p<6$ and $T >N^{p-4}$} by a factor of $N^{\frac{2}{3}(1-\frac{4}{p})}$. {Proving Strichartz estimates can be reduced to counting solutions of Diophantine inequalities when the Lebesgue index is an even integer. Using this idea, the cases $p=8,10$ can be addressed, and they are the focus of the next theorem; but the case $p=6$ seems to remain out of reach.}

\begin{thm} 
\label{thmp8} The conjecture holds for $d = 2$, $p \geq 8$. In other words, there holds, for generic $(\alpha_{ij})$ and all $\epsilon>0$,
\begin{align*}
&\mbox{if $8 \leq p \leq 10$}, & \| e^{it\widetilde{\Delta}} f \|_{L^p([0,T] \times \mathbb{T}^2)}
\lesssim_{\alpha, \epsilon} N^\epsilon \| f \|_{L^2(\mathbb{T}^d)} \left[ N^{1-\frac{4}{p}} + T^{\frac{1}{p}} N^{\frac{1}{2}-\frac{3}{p}} \right]\\
&\mbox{if $p > 10$},  & \| e^{it\widetilde{\Delta}} f \|_{L^p([0,T] \times \mathbb{T}^2)}
\lesssim_{\alpha, \epsilon} N^\epsilon  \| f \|_{L^2(\mathbb{T}^d)} \left[ N^{1-\frac{4}{p}} + N^{1-\frac{8}{p}} T^{\frac{1}{p}} \right] ,
\end{align*}
provided $\operatorname{Supp} \widehat{f} \subset B(0,N)$
\end{thm}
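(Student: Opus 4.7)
The plan is to establish the two endpoint estimates at $p = 8$ and $p = 10$, from which the full range $p \ge 8$ follows by H\"older interpolation. For $p > 10$, log-convexity gives $\|u\|_p \le \|u\|_{10}^{10/p}\|u\|_{\infty}^{1-10/p}$; combined with the trivial pointwise bound $\|u\|_{\infty} \lesssim N\|f\|_{L^2}$ (Cauchy-Schwarz on the $N^2$-supported frequencies), this directly yields the target $N^\epsilon(N^{1-4/p} + T^{1/p}N^{1-8/p})$. For $p \in (8, 10)$, interpolation between $L^8$ and $L^{10}$ with parameter $\beta = 40/p - 4$ together with subadditivity of $x \mapsto x^r$ for $r \in [0,1]$ produces four terms; two match the target exponents $N^{1-4/p}$ and $T^{1/p}N^{1/2-3/p}$ exactly, while the two cross-terms lie on the log-linear interpolate between them and are consequently dominated by their sum.

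For each endpoint $p = 2m$ with $m \in \{4, 5\}$, I would expand $\|u\|_{L^{2m}}^{2m} = \|u^m\|_{L^2}^2$ by inserting the Fourier series of $u$, apply Plancherel in $x$ to enforce $\sum_i k_i = \sum_i k_i'$, and bound the time integral pointwise by $|\int_0^T e^{it \Delta_m Q}\,dt| \le \min(T, 2/|\Delta_m Q|)$ with $\Delta_m Q := \sum_i Q(k_i) - \sum_i Q(k_i')$. Schur's test on the resulting bilinear form -- using that the $\ell^2$-norm on $m$-tuples of the product weights $\prod_i |\widehat f_{k_i}|$ equals $\|f\|_{L^2}^m$ -- then gives
\[
\|u\|_{L^{2m}}^{2m} \le \|f\|_{L^2}^{2m}\cdot S_m, \qquad S_m := \sup_{(k) \in B(N)^m}\,\sum_{\substack{(k') \in B(N)^m\\ \sum_i k_i' = \sum_i k_i}}\min\!\Bigl(T, \frac{1}{|\Delta_m Q|}\Bigr).
\]
The diagonal where $(k_i')$ is a permutation of $(k_i)$ contributes $\le m!\,T$ to the inner sum (there $\Delta_m Q = 0$), while a dyadic decomposition in $|\Delta_m Q|$ reduces the off-diagonal contribution to controlling, uniformly in $v \in \Z^2$, $c \in \R$ and $\delta > 0$, the count
\[
r_m(v, c, \delta) := \#\bigl\{(k_i') \in B(N)^m : \textstyle\sum_i k_i' = v,\ |\sum_i Q(k_i') - c| \le \delta\bigr\}.
\]

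The decisive Diophantine input is the equidistribution estimate $\sup_{v, c}\, r_m(v, c, \delta) \lesssim_\epsilon N^\epsilon(1 + \delta\, N^{2m-4})$ for $m \in \{4,5\}$ and generic $(\alpha_{ij})$, reflecting that the map $(k_i') \mapsto \sum_i Q(k_i')$ on the $(2m-2)$-dimensional affine subspace $\{\sum k_i' = v\} \cap B(N)^m$ distributes its values approximately uniformly in an interval of length $\sim N^2$. Inserting this into the dyadic sum and adding the diagonal term gives $S_m \lesssim_\epsilon N^\epsilon(T + N^{2m-4})$. Since $T + N^{2m-4} \le N^{2m-4} + T N^{m-3}$ for $m \ge 4$ and $N \ge 1$, one obtains $\|u\|_{L^{2m}}^{2m} \lesssim_\epsilon N^\epsilon(N^{2m-4} + T N^{m-3})\|f\|_{L^2}^{2m}$, which is exactly the conjectured $L^8$ and $L^{10}$ endpoints, completing the proof after interpolation.

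The main obstacle is the uniform Diophantine count for generic $(\alpha_{ij})$. My approach is a Borel--Cantelli argument on the parameter space $E$ of symmetric matrices: for each dyadic $N$ and each relevant pair $(v, c)$, one shows that the Lebesgue measure of those $(\alpha_{ij})$ for which $r_m$ exceeds the target is summably small, so that the exceptional set summed over $N$ and $(v, c)$ has measure zero. The delicate point -- sharpest at $m = 5$, where the dimension balance between parameters and constraints is tightest -- is to obtain equidistribution of the $m$-fold quadratic form $\sum_i Q(k_i')$ on the constraint subspace with a rate uniform enough in $(v, c)$ to survive this union bound.
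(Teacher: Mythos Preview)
Your reduction to the supremum $S_m$ and the interpolation steps are fine and match the paper's scheme. The gap is in the Diophantine input. The claimed bound
\[
\sup_{v,c}\,r_m(v,c,\delta)\ \lesssim_\epsilon\ N^\epsilon\bigl(1+\delta N^{2m-4}\bigr)
\]
is \emph{false} for $m\in\{4,5\}$; the correct statement (which is exactly what the paper proves) has $N^{m-3}$ in place of~$1$. To see why the constant term cannot be $O(N^\epsilon)$, write $Q(n)=n_1^2+\alpha n_2^2+2\beta n_1n_2$ and note that for generic $(\alpha,\beta)$ the equation $\sum_iQ(k_i')=c$ with $c=\sum_iQ(k_i)$ forces the three integer equalities $\sum_i(k_i')_1^2=A$, $\sum_i(k_i')_2^2=B$, $\sum_i(k_i')_1(k_i')_2=C$ for fixed $A,B,C$. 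This count $|\Omega^{m,N,+}_{a,b,A,B,C}|$ is a purely arithmetic quantity, independent of $(\alpha,\beta)$, and for suitable $(A,B,C)$ it is genuinely of size $\sim N$ when $m=4$ (by Pall's formula, whenever $\gcd(A',B',C')$ has a square factor $\sim N^2$) and $\sim N^2$ when $m=5$. So your ``diagonal'' of permutations does not exhaust the solutions with $\Delta_mQ=0$, and the off-diagonal estimate you need fails.

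Consequently, a Borel--Cantelli argument over the parameter space $E$ cannot, by itself, produce the bound you state: genericity of $(\alpha_{ij})$ controls how many triples $(A,B,C)$ fall into a window $|A+B\alpha+C\beta-c|<\delta$, but it has no bearing on $|\Omega^{m,N,+}_{a,b,A,B,C}|$ for a fixed triple. The paper's decisive input is Pall's exact formula for representations of binary forms by $x_1^2+x_2^2+x_3^2$, which yields $|\Omega^{4,N,+}_{a,b,A,B,C}|\preceq h$ with $h^2\mid\gcd(A',B',C')$; this $h$ is then summed against the (generic) count of triples in the window, with delicate case-work when $h$ is large or $A'B'=(C')^2$. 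The $m=5$ case is actually the easier of the two (contrary to your final remark), since the extra variable gives room for a divisor-bound argument. With the corrected estimate $r_m\preceq N^{m-3}+\delta N^{2m-4}$ substituted into your dyadic sum, your chain yields $S_m\preceq N^{2m-4}+TN^{m-3}$, which is exactly the conjectured endpoint---so the architecture is right, but the missing ingredient is Pall's formula, not a measure-theoretic argument on $E$.
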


\begin{rem} Most of the arguments developed in the present article apply to the case when the "elliptic" Laplacian $\partial_x^2 + \partial_y^2$ is replaced by a "hyperbolic" Laplacian $\partial_x^2 - \partial_y^2$; or in other words, when the symmetric matrix $(\alpha_{ij})$ is allowed to be indefinite, but remains non-degenerate. However, the $\ell^2$ decoupling inequality follows a different numerology, see~\cite{BD3}. As a consequence, the results for $p \geq 8$ are identical for elliptic and hyperbolic Laplacian, but for $p \leq 8$ a different set of exponents is found. \end{rem}

\subsubsection{The Weyl bound argument}\label{sec:weyl_arg}
This is used to prove Theorem~\ref{thmweyl}. Let $d=2$. With $
f(x) = \sum_{k \in \mathbb{Z}^2} \widehat{f}_k e^{2\pi i k \cdot x},$ we have
\begin{equation}\label{bigbird}
\| e^{it\widetilde{\Delta}} f \|^p_{L^p([0,T] \times \mathbb{T}^2)}
= \int_0^T \int_{\mathbb{T}^2} \bigg\lvert  \sum_{n \in \mathbb{Z}^2} \hat{f}_n e^{2\pi i \left[x \cdot n+ t \sum_{i,j} \alpha_{ij} n_i n_j \right]} \bigg\rvert^{p}\,dxdt.
\end{equation}
We construct an exponential sum like that inside the absolute value, with the $\hat{f}_n$ replaced by some more tractable coefficients. Let  $\chi:\R^2\to[0,\infty)$ be a compactly supported smooth cutoff function. Let $K_N$ denote the exponential sum
$$
K_N(t,x) = \sum_{n \in \mathbb{Z}^2} \chi \left( \frac{n_1}{N} \right) \chi \left( \frac{n_2}{N} \right) e^{2\pi i \left[x \cdot n+ t \sum_{i,j} \alpha_{ij} n_i n_j \right]},
$$
which can be regarded as a regularized fundamental solution of $(i \partial_t - \widetilde \Delta) u = 0$.

Similar sums are investigated by Bentkus and G\"otze~\cite{BG1,BG2}, G\"otze~\cite{Goe}, M\"uller~\cite{Mue} and G\"otze and Margulis~\cite{GM} in the case when $(\alpha_{ij})$ is irrational, that is not a multiple of a matrix with rational entries. We investigate this sum for generic $(\alpha_{ij})$. In Lemma~\ref{dispersive} and Proposition~\ref{minkowski} we will prove the following bounds: 

\begin{align*}
\sup_{x \in \mathbb{T}^2} |K_N(t,x)| &\lesssim \min  \left( N^2 ,\frac{1}{t} \right)
&&(T\lesssim \frac{1}{N}),
\\
\sup_{x \in \mathbb{T}^2} |K_N(t,x)|
&\preceq N^{4/3} (1+ t^{1/6})
&&(T> \frac{1}{N}),
\\
\int_1^T \sup_{x \in \mathbb{T}^2} |K_N(t,x)|^{4} \, dt &\preceq
N^{4}T
&&(T>1).
\end{align*}
We prove these estimates using the geometry of numbers approach pioneered by Davenport~\cite{Davenport58}. The first one is classical and optimal. An analysis similar to step (2) in section~\ref{conjecture-justification} suggests that $N(1+t^{1/4})$ might be the correct bound in the second formula. The third bound can be related to a system of diagonal equations to see that on average over $\alpha_{ij}$ it is optimal, see also section~\ref{sec:d>2} below.

The classical $TT^*$ argument of Stein-Tomas can be localized to obtain level-set estimates for $ |e^{it\widetilde \Delta} f (x,t)|$. Consider $f$ a function on the torus, normalized in $L^2$, and localized in Fourier on $B(0,N)$, and let
$$
E_\lambda = \{ (x,t) \in \mathbb{T}^2 \times [-T,T] \;\; \mbox{s.t.} \;\; |e^{it\widetilde \Delta} f (x,t)|>\lambda \}.
$$
Then, if  $\chi(t/T) K_N(t,x)$ is split into
$$
\chi(t/T) K_N(t,x) = J_1(t,x) + J_2(t,x),
$$
one can bound
$$
|E_\lambda| \lesssim \frac{1}{\lambda^2 - \| J_2 \|_{L^\infty}} \| \widehat{J_1} \|_{L^\infty}
$$
provided $\lambda^2 > \| J_2 \|_{L^\infty})$. The question becomes then: how can one optimally split $\chi(t/T) K_N(t,x)$ into $J_1 + J_2$, by making the norm of $J_2$ in $L^\infty$, and that of $J_1$ in $\widehat{L^\infty}$ small? Let
$$
S=
\big\{t\in[0,T]:
\sup_{x\in\T^2}\lvert \chi(t/T) K_N(t,x)\rvert
\geq \lambda^2\big\}
$$
and let $J_1 = \mathbf{1}_{S}(t)\chi(t/T) K_N(t,x)$ and $J_2=[1-\mathbf{1}_{S}(t)]\chi(t/T) K_N(t,x)$. The bound  $\lambda^2>\| J_2 \|_{L^\infty}$ is then trivial. But we compute
$$
\widehat{J_1}(k,\tau)
=  \chi \left( \frac{k_1}{N} \right) \chi \left( \frac{k_2}{N} \right) \int 
\phi \left(\frac{t}{T}\right)\mathbf{1}_{S}(t) e^{2\pi i t(\sum_{i,j} \alpha_{ij} k_i k_j-\tau) } \,dt
$$
and so
$$
\| \widehat{J_1} \|_{L^\infty} \lesssim 
\lvert S \rvert.
$$
The measure $\lvert S \rvert$ can be controlled using the bounds for $\sup_{x\in\T^2}\lvert K_N(t,x)\rvert$ discussed above.

The approach which was has been sketched is implemented in sections~\ref{section2} and~\ref{section3}:
\begin{itemize}
\item Section 2 is dedicated to establishing bounds for the Weyl sum $K_N$.
\item These bounds are then used in Section 3 to deduce Theorem~\ref{thmweyl} by the modified $TT^*$ argument.
\end{itemize}

\subsubsection{The counting argument}\label{sec:count_arg}
This is used to prove Theorem~\ref{thmp8}. If $p$ is an even integer, one deduces from \eqref{bigbird} that
$$
\| e^{it\widetilde \Delta} f \|_{L^p([0,T] \times \mathbb{T}^2)}^p = \sum_{\substack{k_1 \dots k_p\in \mathbb{Z}^2 \\ k_1-k_2+\dotsb-k_p=0}} \widehat{f}_{k_1} \overline{\widehat{f}_{k_2}}\dots \overline{\widehat{f}_{k_p}} \frac{1 - e^{2\pi i T \Omega(k_1,\dots,k_p)}}{2\pi i \Omega(k_1,\dots,k_p)},
$$
where, denoting $Q$ the quadratic form associated to $(\alpha_{ij})$,
$$
\Omega(k_1,\dots,k_p) = Q(k_1) - Q(k_2) + \dots - Q(k_p).
$$
After some manipulations, matters reduce to counting weighted solutions of Diophantine inequalities. In Section~\ref{section4}, the decoupling theory of Bourgain and Demeter~\cite{BD2} leads to the proof of the conjecture for $p \geq 16$ in a rather straightforward way. Alternatively, one can sum trivially over the odd numbered vectors $k_1,k_3,\dotsc$ and proceed by seeking a bound for the number of solutions to
\begin{align}\label{thought-it-was-a-bird-actually-a-squirrel}
\lvert Q(k_2)+Q(k_4)+\dotsb+Q(k_{p})-\beta\rvert &<\delta,
&k_2+k_4+\dotsb+k_{p}&=a,
&&k_i\in\Z^2\cap B(0,N),
\end{align}
which is uniform in $a,\beta$. To recover the conjecture it is necessary, in particular, to have an optimal bound in the case $\delta = \frac{1}{N}$.

There is previous work on bounds for the count of integer solutions to a generic quadratic inequality, where one wants uniformity in certain parameters~\cite{BN,GK,KY}. In those cases one in interested principally in lower bounds. In short, the strategy is to seek an asymptotic for the number of solutions which is valid unless the coefficients lie in a set with small measure. If the sum of this measure over all values of the parameters is finite, then by the Borel-Cantelli lemma the asymptotic holds for generic values of the coefficients.

It seems that this strategy does not work in our case, as we want uniformity in a rather large range of parameters and the space of coefficients has a relatively low dimension compared to the number of variables.

In Section~\ref{section5} we use an alternative approach to count solutions to \eqref{thought-it-was-a-bird-actually-a-squirrel}, proving the conjecture in the case $p=8$. We rely on the formula of Pall~\cite{PAL} on the number of solutions $(k_1,k_2,k_3,\ell_1,\ell_2,\ell_3)$ to the system
$$
k_1^2 + k_2^2 + k_3^2 = A, \qquad \ell_1^2 + \ell_2^2 + \ell_3^2 = B, \qquad k_1 \ell_1 + k_2 \ell_2 + k_3 \ell_3 = C.
$$
The ensuing analysis is delicate but repeatedly falls back on a simple estimate: for any $k\geq 2$ the number of solutions $x\in \Z^d\cap B(0,N)$ to some inequality $|f(x)-\nu|<\delta$ is at most $N_k^{1/k}$, where $N_k$ is the number of $k$-tuples $x_1,\dotsc,x_k\in \Z^d\cap B(0,N)$ such that $f(x_1)-f(x_i)=O(\delta)$ for each $i=2,\dotsc,k$. In particular $N_k$ is independent of the parameter $\nu$, allowing for uniform upper bounds.

Finally, Section~\ref{section6} contains the proof of the conjecture for $p=10$, also making use of Pall's formula, but this case turns out to be much simpler than $p=8$.

\subsection{Higher dimensions $d$}\label{sec:d>2}
One naturally asks whether the proof of Theorems~\ref{thmweyl} and~\ref{thmp8} can be extended to $d>2$.

We first consider Theorem~\ref{thmweyl}. 
To follow the strategy of section \ref{sec:weyl_arg} for $d>2$ we would define
$$
K_N^{(d)}(t,x) = \sum_{n \in \mathbb{Z}^d} \chi \left( \frac{n_1}{N} \right) \dotsm \chi \left( \frac{n_d}{N} \right) e^{2\pi i \left[x \cdot n+ t \sum_{i,j} \alpha_{ij} n_i n_j \right]},
$$
and we would then look for bounds on the quantities
\begin{equation}\label{rudolf}
S(T) = \sup_{x\in \T^d,\, t\in [T,2T]}\left\lvert
{K_N^{(d)}}(t,x)\right\rvert,
\qquad
I_s(T) = 
\int_{1}^{T} \sup_{x\in \T^d}\left\lvert
{K_N^{(d)}}(t,x)\right\rvert^{2s} \, dt
\qquad
(s\in\N).
\end{equation}
As a first bound on $I_s(T)$, it follows from the work of Guo and Zhang~\cite{GZ} that for generic $(\alpha_{ij})$ we have
\begin{equation}\label{jubjub}
I_{s}(T)
\lesssim
N^\epsilon T(N^{ds+d}+N^{2ds-d(d+1)}).
\end{equation}
{See Appendix~A for a brief account of the argument by which \eqref{jubjub} follows from the cited work. There we also sketch a strategy for replacing the $ds+d$ with $ds$.}

One might be able to imitate the proof of our Proposition~\ref{minkowski} to bound $I_s(T)$ and $S(T)$ from \eqref{rudolf}. To see how, let $E^d$ be the set of all $d\times d$ symmetric $(\alpha_{ij})$ such that $-2 \leq \alpha_{ij} \leq 2$, and with smallest absolute eigenvalue $|\lambda_1|>1$. To majorize $S(T)$ we would need to bound
\begin{multline*}
\mu_1(T,\vec{\gamma})=
\operatorname{measure}\big\{
(\alpha_{ij})\in E
:
\alpha_{i1}n_1^{(j)}+\dotsb+\alpha_{id}n_d^{(j)}
=t^{-1}m_i^{(j)}+O\Big(\frac{\gamma_j}{TN}\Big)
\\
\text{ for all } i,j=1,\dotsc,d
\text{ and some }
t\sim T,\,
n_i^{(j)}\lesssim \gamma_j N,\,
m_i^{(j)} \lesssim T\gamma_j N
\big\}
\end{multline*}
for appropriate $\frac{1}{N}\leq \gamma_i\leq 1$.
For $I_s(T)$ the analogous quantity is
\begin{multline*}
\mu_2(t)=
\operatorname{measure}\big\{
(\alpha_{ij})\in E
:
\alpha_{i1}n_1^{(j)}+\dotsb+\alpha_{id}n_d^{(j)}
=t^{-1}m_i^{(j)}+O\Big(\frac{1}{tN}\Big)
\\
\text{ for all } i,j=1,\dotsc,d
\text{ and some }
n_i^{(j)}\lesssim  N,\,
m_i^{(j)} \lesssim t N
\big\}.
\end{multline*}
We suggest that it may be possible to estimate $\mu_1$ and $\mu_2$ using the geometry of numbers.

Finally we consider extending the proof of Theorem~\ref{thmp8}, as outlined in section~\ref{sec:count_arg}, to the case $d>2$. The problem here is easier to describe. Given a symmetric integer matrix $(A_{ij})$, one requires a bound for the number of solutions $ y^{(1)},\dotsc,y^{(d+1)}) \in (\Z^d)^{d+1}$ with $ |y^{(k)}|\lesssim N$ to the system
$$
\sum_{k=1}^{d+1}y^{(k)}_iy^{(k)}_j=A_{ij}
\qquad\qquad(1\leq i\leq j \leq d)
$$
and one requires this bound to have explicit dependence on the $A_{ij}$. One possibility is to use the Siegel mass formula as in \cite{BD4}.

\subsection{Notations} 
\begin{itemize}
\item For $x$ a real number, $\| x \|$ denotes the smallest distance to an integer: $\| x \| = \min_{k \in \mathbb{Z}} |x - k|$.

\item We denote $A \lesssim B$ if there exists a universal constant such that $|A| \leq CB$; and $A\sim B$ if $A \lesssim B$ and $B \lesssim A$.

\item We denote $A \preceq B$ if, for any $\epsilon > 0$, there exists $C_\epsilon$ such that $|A| \leq C_\epsilon N^\epsilon B$.

\item {We follow the analytic (rather than the number analytic) convention in using the somewhat informal notation $A \ll B$ if there is a very small constant $c$ such that $A \leq c B$.}

\item Given a set $E$, its characteristic function $\mathbf{1}_E(x)$ equals $1$ if $x \in E$, and $0$ otherwise.

\item For $f(x)$ a function on the torus $\mathbb{T}$, its Fourier coefficients are
$$
\mbox{for $k\in \mathbb{Z}^2$}, \qquad \widehat{f}_k = \int_{\mathbb{T}^2} f(x) e^{-2\pi i k \cdot x} \,dx.
$$ 

\item For $f(t,x)$ a function on $\mathbb{R} \times \mathbb{T}$, its space-time Fourier transform is given by
$$
\mbox{for $(\tau,k) \in  \mathbb{R} \times \mathbb{Z}^2$}, \qquad \widehat{f}(\tau,k) = \int_\mathbb{R} \int_{\mathbb{T}^2} f(t,x) e^{-2\pi i k \cdot x -2\pi i t \tau} \,dx \,dt.
$$ 

\item {Finally, $\langle t \rangle = \sqrt{1+t^2}$.}
\end{itemize}

\subsection*{Acknowledgements} While working on this project, Y. D. was supported by the NSF grant DMS-1900251.

P. G. was supported by the NSF grant DMS-1501019, by the Simons collaborative grant on weak
turbulence, and by the Center for Stability, Instability and Turbulence (NYUAD).

L. G. was supported by a Simons Investigator Award.

S. M. was supported by the Engineering and Physical Sciences Research Council [EP/M507970/1], by the European Research Council under ERC grant agreement no. 670239, by the Fields Institute for Research in Mathematical Sciences, and by NSF-DMS 1363013.

\section{The conjecture}\label{conjecture-justification}

In order to formulate a conjecture, we examine a few particular functions $f$, and determine heuristically the ratio $\| e^{it{\widetilde{\Delta}}} f \|_{L^p ([0,T] \times \mathbb{T}^d)} / \| f \|_{L^2(\mathbb{T}^d)}$ which they yield.

\begin{enumerate}
	\item If $f =1$, we find $\| e^{it\widetilde{\Delta}} f \|_{L^p ([0,T] \times \mathbb{T}^d)} \sim T^{\frac{1}{p}}$.
	\item Choose $f(x) = N ^{\frac{d}{2}} \chi \left( Nx \right)$ for a smooth compactly supported function $\chi$. Then $f$ is normalized in $L^2$, with $\| f \|_{L^2(\mathbb{T}^d)} \sim 1$, and the transform $\hat{f}$ has rapid decay outside the ball $B(0,N)$. Furthermore, for $\frac{1}{N^2} \ll t \ll \frac{1}{N}$, $e^{it\widetilde \Delta} f$ nearly coincides with the solution of the Schr\"odinger equation on $\mathbb{R}^d$ with initial data $f$, which is mostly supported on $B(0,tN)$ and has size $\sim \frac{1}{(tN)^{d/2}}$; {this is because the solution to this PDE propagates at a group velocity $\lesssim N$, hence for times $\ll \frac{1}{N}$, the difference between the torus and the whole space is negligible}. Therefore
	$$
	\| e^{it{\widetilde{\Delta}}} f \|_{L^p ([0,1] \times \mathbb{T}^d)}^p
	\gtrsim
	\int_{\frac{1}{N^2}}^{\frac{1}{N}} \int_{B(0,tN)} \frac{dx\, dt}{(tN)^{dp/2}}
	\gtrsim
	N^{\frac{dp}{2} - (d+2)} \qquad \mbox{if $p> \frac{2(d+2)}{d}$}.
	$$
	\item We want to find the ``refocusing time" with the same $f$. In other words, how big should $t$ be taken in order that $e^{it\widetilde \Delta} f$ be similar to $f$ itself? Or what is an ``almost period" of the flow?
	
	On a rectangular torus all possible $e^{it \Delta} f$ are periodic with a common period, forcing the refocusing time to be $O(1)$; clearly this is not the typical behaviour of a nonpositively curved manifold.
	
	The solution $e^{it{\widetilde{\Delta}}} f$ can be expressed as
	$$
	e^{it{\widetilde{\Delta}}} f (y) = N^{-d/2} \sum_{k \in \mathbb{Z}^d} \widehat{\chi} \left( \frac{k}{N} \right) e^{2\pi i \left[y\cdot k+ t \sum \alpha_{ij} k_i k_j \right] }
	$$
	Heuristically, for $e^{it{\widetilde{\Delta}}} f$ to be similar to $f$, we need that
	$$
	\forall k \in \mathbb{Z}^d \quad \mbox{with $|k| \lesssim N$}, \qquad \| t \sum_{ij} \alpha_{ij} k_i k_j \| \ll 1,
	$$
	where $\| x \|$ is the smallest distance of $x$ to an integer {(this is simply because this makes the time dependent summand in the complex exponential close to a integer multiple of $2\pi$)}. This will be achieved if $\| t \alpha_{ij} \| \ll \frac{1}{N^2}$ for all $i,j$. Since $\alpha_{11} =1$, we will look for $t \in \mathbb{N}$; and since $\alpha_{ij} = \alpha_{ji}$, this leaves $\frac{d^2 + d -2}{2}$ independent coefficients.
	
	By Dirichlet's and Khinchin's approximation theorems, for generic $(\alpha_{ij})$, and for any $\epsilon>0$, there exists an integer $q$ such that $N^{d^2 + d -2-\epsilon} \lesssim q \lesssim N^{d^2 + d -2}$ and
	$$
	\forall i,j, \qquad \| q \alpha_{ij} \| < \frac{1}{N^2}.
	$$
	Choosing $t = q$, we find an almost period $\sim N^{d^2 + d -2}$ up to subpolynomial factors.
	
	It is natural to expect that, at each refocusing time, a contribution similar to that found in (2) will occur. This suggests that, for any $\epsilon>0$,
	$$
	\| e^{it{\widetilde{\Delta}}} f \|_{L^p ([0,T] \times \mathbb{T}^d)} \gtrsim N^{\frac{d}{2} - \frac{d+2}{p}} \left[ \frac{T}{N^{d^2 + d -2-\epsilon}} \right]^{\frac{1}{p}} > N^{\frac{d}{2} - \frac{d^2 + 2d}{p} - \epsilon} T^{\frac{1}{p}}.
	$$
	\item Compared to the rectangular case, we also have new competitors, namely $n$-dimensional data, where $n \in \{ 1,\dots,d-1 \}$. Here we choose $f(x)=N^{n/2}\chi(Nx_1,\dotsc,Nx_n)$ to depend only on (say) the first $n$ variables, replicating the behavior on an $n$-dimensional torus. By the above, this give examples for which
	$$
	\| e^{it{\widetilde{\Delta}}} f \|_{L^p ([0,T] \times \mathbb{T}^d)} \gtrsim N^{\frac{n}{2}-\frac{n^2 + 2n}{p}-\epsilon} T^{\frac{1}{p}} \| f \|_{L^2(\mathbb{T}^d)}.
	$$
	These may dominate the contribution from (3), because the refocusing time grows rapidly with the dimension.
\end{enumerate}
Overall, this gives the conjecture~\eqref{theconjecture}.

\begin{rem}\label{fulvous-tree-duck}
When $p$ is an even integer, parts (1), (3) and (4) of the heuristic above can be made rigorous as follows (the remaining part is more straightforward).

Let $p\in 2\N$ and consider the lower bound on the number of solutions of the quadratic  Parsell-Vinogradov system in Parsell, Prendiville and Wooley~\cite{PPW}. There, we learn that the number of solutions $(k^{(1)},\dots,k^{(p)}) \in (\mathbb{Z}^d)^p$ of
$$
\forall \ell,m,n,\qquad \sum_{j = 1}^p (-1)^j k^{(j)}_\ell = 0 \quad \mbox{and} \quad \sum_{j = 1}^p (-1)^j k^{(j)}_m k^{(j)}_n = 0,
$$
which furthermore satisfy $|k^{(j)}| < N$, is
$$
\gtrsim N^{\frac{pd}{2}} + \sum_{j=1}^d N^{(p-1) j + d - j (j+2)} \geq N^{\frac{pd}{2}} + N^{pd-d(d+2)}
$$
(this is Theorem 1.2 in that paper; the notations there are related to ours by $2s=p$, $k=2$, while $d$ remains the same). Defining $f$ by its Fourier coefficients $\widehat{f}_k =  \mathbf{1}_{|k| < N}$,  the above bound implies that 
$$
\| e^{it\widetilde{\Delta}} f \|_{L^p([0,T]\times \mathbb{T}^d)}
\gtrsim (N^{\frac{d}{2}}+N^{d - \frac{d(d+2)}{p}}) T^{\frac{1}{p}} 
> \max(1,N^{\frac{d}{2} - \frac{d(d+2)}{p}}) T^{\frac{1}{p}} \| f \|_{L^2(\mathbb{T}^d)}
$$
for generic $\alpha_{ij}$ and large $T$. Considering lower dimensional examples, we find that
$$
\mbox{as $T \to \infty$}, \qquad \sup_f \frac{ \| e^{it {\widetilde{\Delta}}} f \|_{L^p ([0,T]  \times \mathbb{T}^d)} } {\| f \|_{L^2(\mathbb{T}^d)} } \gtrsim T^{1/p} \sum_{n=0}^d N^{\frac{n}{2} - \frac{n^2 + 2n}{p}}.
$$
This is consistent with the conjecture which was heuristically derived above.
\end{rem}

\section{Weyl sum estimates}

\label{section2}

\subsection{Statement of the results}

For $\chi$ a smooth, nonnegative function on $\mathbb{R}$, equal to $1$ on $B(0,\frac{1}{2})$ and $0$ on $B(0,1)^\complement$, we define the regularized fundamental solution
$$
K_N(t,x) = \sum_{n \in \mathbb{Z}^2} \chi \left( \frac{n_1}{N} \right) \chi \left( \frac{n_2}{N} \right) e^{2\pi i \left[x \cdot n+ t \sum_{i,j} \alpha_{ij} n_i n_j \right]}.
$$

We state without proof the following classical bound for short time, which does not use the genericity of the matrix $(\alpha_{ij})$.

\begin{lem} \label{dispersive}
If $|t| \lesssim \frac{1}{N}$, then 
$$
\forall x \in \mathbb{T}^2, \qquad |K_N(t,x)| \lesssim \min  \left( N^2 ,\frac{1}{t} \right).
$$
\end{lem}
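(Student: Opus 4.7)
The plan is to establish the two bounds $|K_N(t,x)| \lesssim N^2$ and $|K_N(t,x)| \lesssim 1/t$ separately; thanks to the first, the second need only be verified in the range $N^{-2} \lesssim t \lesssim N^{-1}$. The first is immediate from the triangle inequality, since the cutoff restricts the sum defining $K_N$ to $O(N^2)$ terms, each of modulus at most $1$.

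The natural route to the dispersive bound is Poisson summation. Setting $F(\xi) = \chi(\xi_1/N)\chi(\xi_2/N)\,e^{2\pi i t Q(\xi)}$, where $Q(\xi) = \sum_{i,j} \alpha_{ij} \xi_i \xi_j$, we have
\[
K_N(t,x) = \sum_{n \in \Z^2} F(n)\, e^{2\pi i x \cdot n} = \sum_{m \in \Z^2} \widehat{F}(m - x),
\]
and the problem reduces to pointwise estimates on $\widehat{F}(y)$. The phase $\phi(\xi) = t Q(\xi) - y \cdot \xi$ has unique critical point $\xi_\ast = A^{-1} y / (2t)$, with $A = (\alpha_{ij})$, and nondegenerate Hessian $2tA$ with $|\det(2tA)| \sim t^2$.

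I would then establish two pointwise bounds on $\widehat{F}$. First, completing the square and rescaling $\xi = \xi_\ast + \sqrt{|t|}\,\eta$ turns $\widehat{F}(y)$ into a phase factor times $|t|^{-1}$ times a smooth oscillatory Gaussian-type integral, giving the uniform bound $|\widehat{F}(y)| \lesssim 1/t$ regardless of the signature of $A$. Second, when $|y| \gg tN$, on the support $|\xi| \lesssim N$ of the cutoff one has $|\nabla\phi(\xi)| \geq |y| - 2|t|\|A\||\xi| \gtrsim |y|$, so repeated integration by parts yields rapid decay $|\widehat{F}(y)| \lesssim_k N^2 (tN/|y|)^k$ for every $k \in \N$.

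To assemble, note that since $tN \lesssim 1$, only a bounded number of $m \in \Z^2$ satisfy $|m - x| \leq C$ for a fixed large constant $C$, and each such term is bounded by $1/t$ via the uniform estimate. The remaining terms are controlled by the rapid decay: for $k$ large enough, $\sum_{|m| \geq C} N^2 (tN/|m|)^k \lesssim N^2 (tN/C)^k \lesssim 1 \leq 1/t$. Summing the two contributions yields $|K_N(t,x)| \lesssim 1/t$. The main technical subtlety is the integration-by-parts step: one must verify that derivatives of the slowly oscillating factor $e^{2\pi i tQ(\xi)}$ contribute only factors of $|t\,\nabla Q(\xi)| \lesssim tN \lesssim 1$ on the support of the cutoff, so that each step truly gains the factor $(tN/|y|)$ from the phase — here the hypothesis $tN \lesssim 1$ is essential.
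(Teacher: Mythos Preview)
The paper states this lemma without proof, merely calling it a classical consequence of basic Schr\"odinger dispersive properties. Your approach via Poisson summation and stationary/non-stationary phase is exactly the standard route, and the overall architecture is correct.

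There is, however, a genuine gap in the far-field summation when $t$ is close to $1/N$. Your non-stationary bound $|\widehat F(y)|\lesssim_k N^{2}(tN/|y|)^{k}$ --- obtained by integrating by parts against the linear phase $-y\cdot\xi$ while treating $e^{2\pi i tQ}$ as part of the amplitude --- is correct but too weak. For any \emph{fixed} $k$ the tail sum $\sum_{|m|\ge C} N^{2}(tN/|m|)^{k}$ is of order $N^{2}(tN)^{k}$, which for $tN\sim 1$ is $\sim N^{2}$, far exceeding the target $1/t\sim N$. Letting $k$ grow with $N$ does not obviously save you either, since the constants hidden in $\lesssim_k$ (from repeated differentiation of the oscillatory factor $e^{2\pi i tQ}$) can grow factorially.

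The fix is simple: integrate by parts with the \emph{full} phase $\phi(\xi)=tQ(\xi)-y\cdot\xi$ rather than just its linear part. After rescaling $\eta=\xi/N$ the phase becomes $\Psi(\eta)=tN^{2}Q(\eta)-Ny\cdot\eta$; on $|\eta|\le 1$ and for $|y|\ge C$ one has $|\nabla\Psi|\gtrsim N|y|$ while $\|\nabla^{2}\Psi\|\lesssim tN^{2}\lesssim N|y|$ and the amplitude $\chi(\eta_1)\chi(\eta_2)$ has all derivatives $O_k(1)$. Standard non-stationary phase then gives the sharper bound
\[
|\widehat F(y)|\lesssim_k N^{2}(N|y|)^{-k}=N^{2-k}|y|^{-k},
\]
and for any fixed $k\ge 3$ the tail $\sum_{|m-x|\ge C}N^{2-k}|m-x|^{-k}\lesssim N^{2-k}\le 1/t$. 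Equivalently, writing $\widehat F$ as the convolution of the explicit Gaussian fundamental solution (pointwise of size $|t|^{-1}$) with $\widehat g$ yields $|\widehat F(y)|\lesssim t^{-1}(tN/|y|)^{k}$, with the better prefactor $t^{-1}$ in place of $N^{2}$; this also sums.
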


{\begin{proof} This bound can be proved first on $\mathbb{R}^2$ by stationary phase, and then transferred to the torus by Poisson summation.\end{proof}}

For larger time, we obtain the following bounds.

\begin{prop}\label{minkowski} Consider a generic positive symmetric matrix $(\alpha_{ij})_{1 \leq i,j \leq 2}$, with eigenvalues $\lambda_1 < \lambda_2$, which satisfies
	\begin{equation}
	\label{annashummingbird}
	\forall \; i,j, \quad |\alpha_{ij}| \leq 2, \quad \mbox{and} \quad \lambda_1 > 1.
	\end{equation}
	Then
	\begin{itemize}
		\item[(i)]
		For any $t > \frac{1}{N}$ (and {recalling that} $\langle t \rangle = \sqrt{1+t^2}$)
		$$
		\sup_{x \in \mathbb{T}^2} |K_N(t,x)| \preceq N^{4/3} \langle t \rangle^{1/6}.
		$$
		\item[(ii)] For any $T \geq 1$, 
		$$
		\int_1^T \sup_{x \in \mathbb{T}^2} |K_N(t,x)|^4 \, dt \preceq N^4 T.
		$$
	\end{itemize}
\end{prop}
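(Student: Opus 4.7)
\medskip
\noindent
\textbf{Proof proposal for Proposition~\ref{minkowski}.}

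The plan is to reduce the pointwise size of $K_N$ to a lattice point counting problem via Weyl's differencing (squaring) trick, and then exploit the genericity of $(\alpha_{ij})$ through Minkowski's theorem on successive minima together with a Borel--Cantelli argument. First I would apply the squaring inequality
\[
|K_N(t,x)|^2 \leq C \sum_{|h_1|,|h_2| \lesssim N} \prod_{j=1}^{2}\min\Big(N,\; \big\lVert 2t\beta_j(h)\big\rVert^{-1}\Big),
\]
where $\beta_1(h)=\alpha_{11}h_1+\alpha_{12}h_2$ and $\beta_2(h)=\alpha_{12}h_1+\alpha_{22}h_2$. Note that the $x$--dependence drops out once we take absolute values, so the right-hand side already controls $\sup_{x\in\T^2}|K_N(t,x)|^2$.

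Next, I would decompose dyadically in the sizes $\gamma_j \in [N^{-1},1]$ of $\|2t\beta_j(h)\|$, reducing matters to estimating
\[
A_t(\gamma_1,\gamma_2) = \#\big\{\,h\in\Z^2 : |h|\leq N,\ \|2t\beta_j(h)\|\leq \gamma_j\ (j=1,2)\,\big\}.
\]
This count equals the number of points of the unimodular lattice
\[
\Lambda_t = \big\{(h_1,h_2,\,q_1-2t\beta_1(h),\,q_2-2t\beta_2(h)) : h\in\Z^2,\ q\in\Z^2\big\}\subset\R^4
\]
inside the symmetric box $B=[-N,N]^2\times[-\gamma_1,\gamma_1]\times[-\gamma_2,\gamma_2]$, of volume $\asymp N^2\gamma_1\gamma_2$. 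Minkowski's second theorem gives $A_t(\gamma_1,\gamma_2) \lesssim \max(1, \operatorname{vol}(B))/\prod \lambda_i$, where $\lambda_1\leq\dotsb\leq\lambda_4$ are the successive minima of $\Lambda_t$ with respect to $B$; equivalently, the count is controlled as soon as we can exclude $\lambda_1$ being pathologically small. Then (i) follows by optimising the dyadic bound against the trivial $N^2$ bound at the critical scale $\gamma_j \sim N^{-1/3}t^{-1/6}$, and (ii) follows by integrating in $t$: each factor $\int_1^T \min(N,\|2t\beta_j(h)\|^{-1})\,dt$ costs only $T\log N$ by a change of variables, and the remaining $h$--sum is absorbed into $N^4$.

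The genuinely hard step is ruling out pathologically small $\lambda_1$ for generic $\alpha$. I would phrase this as a Diophantine transversality statement: for each dyadic pair $(T_0,\gamma_1,\gamma_2)$ and each choice of a vector $(h,q)\in\Z^4$ with $|h|\leq N$, the set of $(\alpha_{ij})\in E$ for which $|q_i - 2t\beta_i(h)|\leq \gamma_i$ admits a solution $t\sim T_0$ has Lebesgue measure $\lesssim \gamma_1\gamma_2/(T_0 N^2)$ (the linear map $\alpha\mapsto (\beta_1(h),\beta_2(h))$ is nondegenerate for $h\ne 0$, so $(\alpha_{ij})$ is confined to a thin slab). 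Summing over $h,q,T_0$ in the regime where Minkowski alone does not already give the desired estimate, one gets a convergent total measure; Borel--Cantelli then yields a null set $F$ outside of which the required bounds on the successive minima, hence on $A_t(\gamma_1,\gamma_2)$, hold for all large $N$ and $t$. This is the step that forces the $\preceq$ (rather than $\lesssim$) in the conclusion, through the $N^\epsilon$ losses needed to make the measures summable.

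Finally, part (ii) benefits from a cleaner route: apply Weyl squaring twice to majorise $|K_N|^4$ pointwise by a sum of the form $\sum_{h,h'}\prod_j \min(N,\|2t\beta_j(h)\|^{-1})\min(N,\|2t\beta_j(h')\|^{-1})$, swap sum and integral, and observe that for each $h\neq 0$ the one-dimensional integrals $\int_1^T \min(N,\|2t\beta_j(h)\|^{-1})^2\,dt$ are $\preceq NT$ by a change of variables (using that $\beta_j(h)$ is a nonzero real number for generic $\alpha$, with quantitative lower bound provided by the Borel--Cantelli step above). Combining these one-dimensional integrations with the lattice count on the $h$--sum gives the advertised $N^4T$, completing the proof.
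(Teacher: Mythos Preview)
Your overall architecture---Weyl squaring, reduction to a lattice count, Minkowski minima, Borel--Cantelli---is exactly the paper's. But two of the steps you describe as routine are in fact the heart of the matter, and as written they do not go through.

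\textbf{Part (i).} You propose to control only the first minimum $\lambda_1$ of $\Lambda_t$ via a single vector $(h,q)$. The quantity that actually governs the count (via Davenport's lemma, which the paper invokes) is the product $M_1^t M_2^t$ of the first \emph{two} minima; a lower bound on $\lambda_1$ alone says nothing about $M_2^t$. Accordingly the paper works with a \emph{pair} of independent lattice vectors $(n,m)$ and $(n',m')$ satisfying the four inequalities $|t L_i(n)-m_i|\le \beta/N$ and $|t L_i(n')-m'_i|\le \gamma/(\beta N)$. This system is overdetermined in the three unknowns $\alpha_{11},\alpha_{12},\alpha_{22}$ (for $t$ essentially fixed), which forces a compatibility equation of the type $-n_1'm_1+n_1m_1'=n_2'm_2-n_2m_2'$. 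The ensuing measure bound involves $|n\times n'|^{-1}$ and $\gcd(n_2,n_2')$, and the summation over $n,n'$ requires a genuinely arithmetic argument (the $\Sigma_1,\Sigma_2$ splitting). Your single-vector slab estimate $\gamma_1\gamma_2/(T_0 N^2)$ is not correct as stated (two linear constraints on the three $\alpha_{ij}$, with $t$ free over a dyadic interval, do not confine $\alpha$ that tightly), and even a corrected version would not sum: you would be summing over $|h|\le N$ and $|q|\lesssim T_0 N$ a quantity that is at best $\sim \gamma_1\gamma_2/(T_0^2|h|^2)$, and this diverges by several powers of $N$.

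\textbf{Part (ii).} Your ``cleaner route'' writes $\int_1^T|K_N|^4\,dt \lesssim \sum_{h,h'}\int_1^T \prod_{j=1}^2 \min(N,\|2t\beta_j(h)\|^{-1})\min(N,\|2t\beta_j(h')\|^{-1})\,dt$ and then decouples the four factors. Each one-dimensional moment is indeed $\int_1^T \min(N,\|\cdot\|^{-1})^4\,dt\sim N^3 T$, so H\"older gives $N^3T$ per pair $(h,h')$; the sum over $h,h'\in[-2N,2N]^2$ then yields $N^7T$, three powers of $N$ too many. The loss is genuine: the four arguments $2t\beta_j(h),2t\beta_j(h')$ are highly correlated in $t$ and in $\alpha$, and no purely $t$-wise decoupling can see this. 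The paper's device is different: it writes $|K_N|^4\le P(t)^2$ and bounds $\iint P(t)^2\,d\alpha_{11}\,d\alpha_{22}$ \emph{pointwise in $t$}. The $\alpha_{11}$- and $\alpha_{22}$-integrals factor, and after writing $n_1=kp_1$, $n_1'=kp_1'$ with $\gcd(p_1,p_1')=1$, a careful count of compatible numerators yields $S_1(t)\preceq N/(|p_1|+|p_1'|)$, from which $\iint P(t)^2\,d\alpha_{11}\,d\alpha_{22}\preceq N^4$ follows. A Borel--Cantelli argument then transfers this to almost every fixed $\alpha$. In short, the averaging has to be done in $\alpha$, not in $t$.
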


\subsection{Proof of $(i)$ in Proposition~\ref{minkowski}}

The first step is to apply Weyl { differencing} to obtain the following lemma, whose proof we postpone for the moment.

\begin{lem} \label{merganser1} Denoting $L_j(n) = \sum \alpha_{ij} n_i$, for any $x \in \mathbb{T}^2$ and $t \in \mathbb{R}$,
$$
|K_N(t,x)|^2  \lesssim \sum_{r_1=-2N}^{2N} \sum_{r_2=-2N}^{2N} \prod_{j=1}^{2} \min\left( N, \frac{1}{\| 2 t L_j(r) \|} \right).
$$
\end{lem}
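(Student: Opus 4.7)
The proof I propose is a textbook Weyl differencing argument, the main point being to record it carefully in the present notation.

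First I would square and substitute. Writing $\chi(n/N)$ for the tensor product $\chi(n_1/N)\chi(n_2/N)$,
$$
|K_N(t,x)|^2
=\sum_{n,m\in\Z^2}\chi(n/N)\chi(m/N)\,e^{2\pi i[x\cdot(n-m)+t(Q(n)-Q(m))]},
$$
where $Q(n)=\sum_{i,j}\alpha_{ij}n_in_j$. Setting $r=n-m$ and using the symmetry $\alpha_{ij}=\alpha_{ji}$, one expands
$$
Q(r+m)-Q(m)=Q(r)+2\sum_{i,j}\alpha_{ij}r_im_j=Q(r)+2\sum_{j=1}^{2}L_j(r)m_j,
$$
so that
$$
|K_N(t,x)|^2
=\sum_{r\in\Z^2}e^{2\pi i[x\cdot r+tQ(r)]}
\sum_{m\in\Z^2}\chi\!\Big(\frac{r+m}{N}\Big)\chi\!\Big(\frac{m}{N}\Big)\,
e^{2\pi i(2t)\sum_{j}L_j(r)m_j}.
$$

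Next, I take absolute values: the factor $e^{2\pi i[x\cdot r+tQ(r)]}$ has modulus one and drops out, and the support of $\chi(\cdot/N)$ restricts the outer sum to $|r_1|,|r_2|\leq 2N$. The inner sum is a tensor product because both the amplitude $\chi((r+m)/N)\chi(m/N)$ and the exponential split coordinate by coordinate, so it factors as
$$
\prod_{j=1}^{2}\Bigl|\sum_{m_j\in\Z}\chi\!\Big(\frac{r_j+m_j}{N}\Big)\chi\!\Big(\frac{m_j}{N}\Big)\,
e^{2\pi i(2tL_j(r))m_j}\Bigr|.
$$

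Finally I invoke the classical one-dimensional van der Corput/summation-by-parts bound: for any smooth bump $\psi$ supported in an interval of length $\lesssim N$ and any $\alpha\in\R$,
$$
\Bigl|\sum_{m\in\Z}\psi(m)\,e^{2\pi i\alpha m}\Bigr|\lesssim \min\!\Big(N,\tfrac{1}{\|\alpha\|}\Big).
$$
Applied to $\psi(m_j)=\chi((r_j+m_j)/N)\chi(m_j/N)$ (whose derivative bounds are uniform in $r_j$) with $\alpha=2tL_j(r)$, each factor is $\lesssim \min(N,1/\|2tL_j(r)\|)$, and combining with the restriction $|r_1|,|r_2|\leq 2N$ gives the claimed bound.

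There is no real obstacle here: the only subtle point is that after the shift $n=r+m$ the amplitude $\chi((r+m)/N)\chi(m/N)$ is not itself a tensor in $(r,m)$, but this is harmless because we already took absolute values in $r$ and what matters is that it is a tensor product in the index $j=1,2$, which is immediate from the choice of $\chi$ as a product. The factor of $2$ inside $\|2tL_j(r)\|$ is simply the standard Weyl-differencing doubling coming from the cross term $2\sum_j L_j(r)m_j$.
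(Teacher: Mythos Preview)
Your argument is correct and is essentially the same Weyl differencing proof as in the paper, followed by Abel summation on each coordinate. The only cosmetic difference is that the paper changes variables to $(r,s)=(n-n',n+n')$ (which introduces a harmless parity restriction on $s$), whereas you use $(r,m)=(n-m,m)$; both substitutions factor the inner sum over the two coordinates and lead to the same bound $\min(N,1/\|2tL_j(r)\|)$ per factor.
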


By the above,
$$
|K_N(t,x)|^2  \lesssim \sum_{m_1,m_2} \# E^N_{m_1,m_2} \frac{N^2}{\langle m_1 \rangle \langle m_2 \rangle}
$$
where
$$
E_{m_1,m_2}^N = \{ n \in [-2N,2N]^2 \; \mbox{s.t.} \; \frac{m_j-1}{N} \leq \| tL_j(n) \| \leq \frac{m_j}{N} \; \mbox{for $j=1,2$} \}
$$
But since any $n,n'$ in $E_{m_1,m_2}^N$ are such that $|n-n'| < 4N$, and $\| t L_j(n-n') \| < \frac{2}{N}$, we find that
\begin{equation}\label{bowerbird}
|K_N(t,x)|^2 \preceq N^2 \left[ \# \{ n \in [-4N,4N]^2 \;\; \mbox{s.t.} \;\; \| t L_j(n) \| < \frac{2}{N} \} + 1 \right].
\end{equation}

We now apply Lemma 3 from~\cite{Davenport58} to get
$$
|K_N(t,x)|^2 \preceq \frac{N^2}{M_1^t  M_2^t},
$$
where $(M_i^t)$ are the Minkowski minima for the norm on $\mathbb{R}^{4}$ given by
$$
\mbox{if} \; (n,m) \in \mathbb{Z}^{4}, \quad F(n,m) = \max \left( \left|\frac{n_i}{N}\right|, N |t L_i(n)-m_i| \right).
$$
Recall that $M_k^t$ is the smallest real number $r$ such that the set $\{ (n,m) \in \mathbb{Z}^4 \; \mbox \; \mbox{such that} \; F(n,m) \leq r \}$ contains $k$ independent vectors.

\bigskip

Below, $t$ is such that $t \sim T \in 2^\mathbb{Z}$. We introduce dyadic scales $\beta$ and $\gamma$. By definition, $M^t_1 \leq \beta$ and $M_2^t \leq \frac{\gamma}{ \beta}$ if there exists $n,m,n',m' \in \mathbb{Z}^{2}$ such that 
\begin{subequations}
\begin{align}
& (n,m) \neq (0,0)\; \mbox{and} \; (n',m') \;  \mbox{not colinear to}\; (n,m)\label{baldeagle0}  \\
&|n_1|, |n_2| \leq \beta N \label{baldeagle1}  \\
&|n'_1|, |n'_2| \leq \frac{\gamma}{\beta}  N \label{baldeagle2} \\
&|m| \lesssim T \beta N \label{baldeagle3} \\
&|m'| \lesssim T \frac{\gamma}{\beta} N \label{baldeagle4}
\end{align}
\end{subequations}
and furthermore
\begin{subequations}
\begin{align}
& \left| t (\alpha_{11} n_1 + \alpha_{12} n_2 ) - m_1 \right| \leq \frac{\beta}{N} \label{goldfinch1} \\
& \left| t (\alpha_{12} n_1 + \alpha_{22} n_2 ) - m_2 \right| \leq \frac{\beta}{N} \label{goldfinch2} \\
& \left| t (\alpha_{11} n_1' + \alpha_{12} n_2' ) - m_1' \right| \leq \frac{\gamma}{\beta N} \label{goldfinch3} \\
& \left| t (\alpha_{12} n_1' + \alpha_{22} n_2') - m_2' \right| \leq \frac{\gamma}{\beta N} \label{goldfinch4}.
\end{align}
\end{subequations}
We are interested in the regime where $\frac{1}{N} < \beta < \sqrt \gamma \ll 1$ (the first inequality is needed for $n$ to be nonzero; the second one expresses the fact that $M_1 \leq M_2$; and the third one is related to the kernel bound we want to prove). Furthermore, there are no solutions of the above unless
\begin{equation}
\label{mallard}
\beta \gtrsim \frac{1}{NT}.
\end{equation}
Indeed, if $T \beta N \ll1$, we find $m=0$ by~\eqref{baldeagle3}, and then, by~\eqref{goldfinch1} and~\eqref{goldfinch2}, $|n| \lesssim \frac{\beta}{NT} \lesssim T\beta N \ll1$. Thus $(n,m)=0$, which is excluded.

Let
$$
E_{\beta,\gamma,N,T}^{n,n'} = \{ (\alpha_{ij})_{1\leq i,j \leq 2} \; \mbox{such that} ~\eqref{annashummingbird},~\mbox{and}~\eqref{goldfinch1}~\mbox{to} ~\eqref{goldfinch4} ~\mbox{hold for some $m,m' \in \mathbb{Z}^2$ and $t \sim T$} \}
$$
{(notice that any $m,m'$ appearing in the above definition automatically satisfy~\eqref{baldeagle3} and~\eqref{baldeagle4} as soon as $n,n'$ satisfy~\eqref{baldeagle1} and~\eqref{baldeagle2}).}

The next lemma provides an upper bound for the size of $E_{\beta,\gamma,N,T}^{n,n'}$; we postpone its proof for the moment.

\begin{lem} \label{merganser2} 
\begin{itemize}
\item[(i)] If $n\times n' = 0$, 
$$E_{\beta,\gamma,N,T}^{n,n'}= \emptyset.$$
\item[(ii)] If $n \times n' \neq 0$, assuming that $n_2' \neq 0$,
$$
|E_{\beta,\gamma,N,T}^{n,n'}| \preceq \frac{\gamma^2 N}{|n \times n'|} \left( 1 + \frac{T \gamma N \gcd(n_2,n_2')}{\beta |n_2'|} \right) \min \left( \frac{\beta}{|n|}, \frac{ \gamma}{\beta|n'|} \right).
$$
\end{itemize}
\end{lem}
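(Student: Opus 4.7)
For Part~(i), when $n\times n'=0$ I would derive a contradiction with~\eqref{baldeagle0}. Assuming $n\neq 0$, write $n'=(p/q)n$ in lowest terms; since $qL_j(n')=pL_j(n)$, combining \eqref{goldfinch1} with \eqref{goldfinch3} (and analogously \eqref{goldfinch2} with \eqref{goldfinch4}) yields
\[
qm_j'-pm_j=O\!\left(\frac{p\beta}{N}+\frac{q\gamma}{\beta N}\right)=O(\gamma),
\]
using $|p|\lesssim \gamma N/\beta$ and $|q|\lesssim \beta N$. Since the left-hand side is an integer and $\gamma\ll 1$, it must vanish, giving $(n',m')=(p/q)(n,m)$ and contradicting \eqref{baldeagle0}. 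Hence $E^{n,n'}_{\beta,\gamma,N,T}=\varnothing$.

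For Part~(ii), the starting observation is the identity $n_1'L_1(n)+n_2'L_2(n)=n_1L_1(n')+n_2L_2(n')$, valid for every $\alpha$. Taking the $(n_1',n_2',-n_1,-n_2)$-combination of \eqref{goldfinch1}--\eqref{goldfinch4} shows that the integer $\ell:=n_1'm_1+n_2'm_2-n_1m_1'-n_2m_2'$ is $O(\gamma)$, hence equal to zero. The four inequalities thus reduce to three independent linear conditions on $\alpha$ together with the divisibility $n_2'\mid (n_1m_1'+n_2m_2'-n_1'm_1)$ (equivalently, $m_2\in\mathbb{Z}$). Using the change of variables $\alpha\mapsto(L_1(n),L_1(n'),L_2(n'))$, whose Jacobian determinant equals $n_2'\Delta$ with $\Delta=n\times n'$, for fixed $t\sim T$ and each admissible integer triple $(m_1,m_1',m_2')$ the measure of $\alpha$ satisfying the three active inequalities is $\lesssim\gamma^2/(\beta T^3N^3|n_2'|\,|\Delta|)$. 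I would then sum over admissible triples via a linear-congruence count inside the box $|m_1|\lesssim T|n|$, $|m_1'|,|m_2'|\lesssim T|n'|$: pigeonholing on the residue of $n_2m_2'$ modulo $n_2'$, the density of admissible triples is $\gcd(n_2,n_2')/|n_2'|$, producing the main multiplicative factor $T\gamma N\gcd(n_2,n_2')/(\beta|n_2'|)$, while the additive $+1$ accounts for boundary or degenerate configurations.

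Finally, to pass from the $(\alpha,t)$-count to a pure $\alpha$-measure, one notes that each $\alpha\in E^{n,n'}_{\beta,\gamma,N,T}$ admits a $t$-window of length $\sim\min(\beta/|n|,\gamma/(\beta|n'|))/N$, determined by whichever of \eqref{goldfinch1} or \eqref{goldfinch3} is more restrictive; dividing by this length contributes the final factor in the claimed bound. The subtlest ingredient is the linear-congruence count: one has to arrange the pigeonhole so that precisely $\gcd(n_2,n_2')$ emerges---rather than a coarser common divisor also involving $n_1,n_1'$---and to handle the transition between the ``$+1$'' and the main term uniformly in the dyadic parameters $\beta,\gamma,N,T$.
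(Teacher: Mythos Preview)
Your Part~(i) is correct and essentially equivalent to the paper's: writing $n'=(p/q)n$ with $\gcd(p,q)=1$ is the same parametrization as the paper's $n=kp$, $n'=kp'$ with $k$ primitive, and both lead to the conclusion $qm'=pm$ (hence collinearity) from $\gamma\ll 1$.

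Part~(ii), however, has a genuine gap in the final step. You compute, for fixed $t$ and fixed admissible triple $(m_1,m_1',m_2')$, the $\alpha$-measure using only the three conditions \eqref{goldfinch1}, \eqref{goldfinch3}, \eqref{goldfinch4}; this yields a factor $\gamma/(\beta NT|n_2'|)$ in place of the sharper $\min(\beta/|n|,\gamma/(\beta|n'|))/(NT)$ that the lemma requires. You then try to recover the missing $\min$ factor via a $t$-window argument, but this is backwards: if each $\alpha\in E$ persists for a $t$-interval of length $\gtrsim\ell=\min(\beta/|n|,\gamma/(\beta|n'|))/N$, then Fubini gives
\[
|E|\;\lesssim\;\frac{1}{\ell}\int_{t\sim T}|A_t|\,dt,
\]
so the bound picks up a factor of $1/\ell\sim N\max(|n|/\beta,\beta|n'|/\gamma)$, which is the \emph{reciprocal} of the factor you want. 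A calculation in the regime $|n|\sim\beta N$, $|n'|\sim 1$ shows your bound can exceed the target by a large power of $N$, and one can check the resulting weaker estimate does not suffice for the subsequent summation over $(n,n')$.

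In the paper, the $\min(\beta/|n|,\gamma/(\beta|n'|))$ factor does \emph{not} come from the $t$-variable at all. After using \eqref{goldfinch1} and \eqref{goldfinch3} to confine $(\alpha_{11},\alpha_{12})$ (Jacobian $|n\times n'|$), one still has both \eqref{goldfinch2} and \eqref{goldfinch4} available to pin down $\alpha_{22}$, giving intervals of length $\sim\beta/(NT|n_2|)$ and $\sim\gamma/(\beta NT|n_2'|)$ respectively; taking the better of the two (and invoking the coordinate-swap symmetry) produces the $\min$. The time variable is then handled by a crude discretization into $O(N^2T)$ values of $t$, since a shift $|dt|\ll N^{-2}$ perturbs each condition only by a constant factor. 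In your scheme the fourth condition is spent entirely on the integrality constraint $m_2\in\mathbb{Z}$ and never re-enters the $\alpha$-measure, which is where the loss occurs.
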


Our aim is to estimate $\sum_{n,n'} | E_{\beta,\gamma,N,T}^{n,n'} | $. The cases where one of $n_1,n_2,n_1',n_2'$ is zero is easier to deal with, so we will omit them and assume below that $n_1n_2n_1'n_2'\neq 0$.

We split this sum into two pieces: by~Lemma~\ref{merganser2},
\begin{align*}
& \sum_{n,n'} | E_{\beta,\gamma,N,T}^{n,n'} | \\
& \preceq \sum_{n \times n' \neq 0} \frac{\gamma^2 N}{|n \times n'|} \left( 1 + \frac{T \gamma N \gcd(n_2,n_2')}{\beta |n_2'|} \right) \min \left( \frac{\beta }{|n|}, \frac{ \gamma}{\beta|n'|} \right)\\
& = \sum_{\substack{n \times n' \neq 0\\ \beta |n_2'| < T \gamma N \gcd(n_2,n_2')}} + \sum_{\substack{n \times n' \neq 0\\ \beta |n_2'| \geq T \gamma N \gcd(n_2,n_2')}} \dots \\
&  = \Sigma_1 + \Sigma_2
\end{align*}
(the summations above are always understood under the conditions~\eqref{baldeagle1} and~\eqref{baldeagle2})

\bigskip

\noindent \underline{Bound for $\Sigma_1$} It can be controlled by
$$
\gamma^3 TN^2 \sum_{n,n'} \frac{\gcd(n_2,n_2')}{|n_2'||n \times n'| |n_1|}.
$$
Now we have
$$
\sum_{n,n'} \frac{\gcd(n_2,n_2')}{|n_2'||n \times n'| |n_1|} = \sum_\lambda \frac{1}{\lambda} \sum_{\substack{n_2 = \lambda \nu_2 \\ n_2' = \lambda \nu_2' \\ \gcd(\nu_2,\nu'_2)=1}} \frac{1}{|n_1| |\nu'_2| |n_1 \nu'_2 - n_1' \nu_2 |} \preceq 1,
$$ due to divisor bounds,
which gives
$$
\Sigma_1 \preceq \gamma^3 T N^2.
$$

\bigskip

\noindent \underline{Bound for $\Sigma_2$} It is less than
$$
\sum_{n,n'}
\frac{\gamma^3 N}{\beta |n \times n'| |n_2'|}.
$$
We have
\begin{align*}
\sum_{n,n'}
\frac{1}{|n \times n'| |n_2'|} = \sum_{n_1,n_2'} \frac{1}{|n_2'|} \sum_{n_1',n_2} \frac{1}{|n_1 n_2' - n_2 n_1'|} \preceq \sum_{n_1,n_2'} \frac{1}{|n_2'|} \preceq \beta N.
\end{align*}
In other words,
$$
\Sigma_2 \preceq \gamma^3 N^2.
$$

\bigskip

We can now conclude the proof of Proposition~\eqref{minkowski}: we just showed that
$$
\sum_{n,n'} | E_{\beta,\gamma,N,T}^{n,n'} | \preceq \gamma^3 N^2 (1+T).
$$
Since $\frac{1}{N} < \beta < 1$, this implies that
$$
\sum_\beta \sum_{n,n'} | E_{\beta,\gamma,N,T}^{n,n'} | \preceq \gamma^3 N^2 (1+T).
$$
We now distinguish between different regimes:
\begin{itemize}
\item If $T>1$, set $\gamma = T^{-1/3-\delta} N^{-2/3-\delta}$, for an arbitrarily small $\delta$.
\item If $ T < 1$, set $\gamma = T^{-\delta} N^{-2/3-\delta}$, for an arbitrarily small $\delta$.
\end{itemize}
With these choices for $\gamma$ (which ensure that $\gamma \ll 1$), we obtain
$$
\sum_{n,n'}  \sum_{\beta,N,T} | E_{\beta,\gamma,N,T}^{n,n'} | < \infty.
$$
Therefore, by Borel-Cantelli, we learn that almost surely in $(\alpha_{ij})$,
$$
M_1^t M_2^t \succeq \gamma(t,N).
$$
This gives
$$
\forall t > \frac{1}{N} \; \mbox{and} \; x \in \mathbb{T}^d, \qquad |K_N(t,x)| \preceq N^{4/3} \langle T \rangle^{1/6}.
$$

\subsection{Proof of $(ii)$ in Proposition~\ref{minkowski}}

Just like in the proof of $(i)$, the first step is to appeal to Lemma~\ref{merganser1} to obtain that
$$
|K_N(t,x)|^2  \lesssim \sum_{n_1=-2N}^{2N} \sum_{n_2=-2N}^{2N} \prod_{j=1}^{2} \min\left( N, \frac{1}{\| t L_j(n) \|} \right).
$$
Denoting $P(t)$ for the above right-hand side
$$
P(t) = \sum_{n_1,n_2}  \prod_{j=1}^{2} \min\left( N, \frac{1}{\| t L_j(n) \|} \right),
$$
observe that
\begin{equation}
\label{greypartridge}
\begin{split}
\iint |P(t)|^2 \,d\alpha_{11} \,d\alpha_{22} & \lesssim
\sum_{\substack{n_1,n'_1 \\ n_2,n'_2}} \int \frac{1}{\| t(\alpha_{11} n_1 + \alpha_{12} n_2) \| + \frac{1}{N}} \frac{1}{\| t(\alpha_{11} n'_1 + \alpha_{12} n'_2) \| + \frac{1}{N}} \,d\alpha_{11} \\ 
& \qquad \int \frac{1}{\| t(\alpha_{12} n_1 + \alpha_{22} n_2) \| + \frac{1}{N}} \frac{1}{\| t(\alpha_{12} n'_1 + \alpha_{22} n'_2) \| + \frac{1}{N}} \,d\alpha_{22}\\
& = \sum_{\substack{n_1,n'_1 \\ n_2,n'_2}} S_1(t) S_2(t).
\end{split}
\end{equation}
{The case when either of $n_1,n_1',n_2,n_2'$ is zero can be dealt with in a similar fashion to the general case; therefore, we assume in the following that none of $n_1,n_1',n_2,n_2'$ vanishes.} Since $S_1$ and $S_2$ are symmetrical, we focuse on the former and write
$$
\left\{
\begin{array}{l}
n_1 = k p_1 \\
n_1' = k p_1'
\end{array}
\right.
\quad \mbox{with} \quad \operatorname{gcd}(p_1,p'_1) = 1,
$$
we claim that (uniformly in $n_2,n_2',\alpha_{12}$ and for $t \geq 1$)
\begin{equation}
\label{peregrinefalcon}
S_1(t) \preceq \frac{N}{ |p_1|+|p_1'|}.
\end{equation}
Coming back to~\eqref{greypartridge}, this implies that
$$
\iint |P(t)|^2 \,d\alpha_{11} \,d\alpha_{22} \preceq  \bigg(\sum_{p_1,p_1'}\frac{N}{|p_1|+|p_1'|}\sum_{k:|kp_1|\leq N,|kp_1'|\leq N}1\bigg)^2\lesssim N^4\bigg(\sum_{p_1,p_1'}\frac{1}{(|p_1|+|p_1'|)^2}\bigg)^2\preceq N^4,
$$
which in turn implies that
$$
\iint \int_1^T \sup_x |K_N(t,x)|^4 \, dt \,d\alpha_{11} \,d\alpha_{22} \preceq N^4 T,
$$
from which the desired result follows by a Borel-Cantelli type argument.

\bigskip

There remains to prove~\eqref{peregrinefalcon}. From now on, $n,n',t,\alpha_{12}$ are fixed. First decompose the arguments into integer and fractional part
$$
\left\{
\begin{array}{l}
t \alpha_{11} n_1 + t \alpha_{12} n_2 = \nu + \delta \\
t \alpha_{11} n'_1 + t \alpha_{12} n'_2 = \nu' + \delta'
\end{array}
\right.
\quad
\mbox{where}
\quad
\nu,\nu' \in \mathbb{Z} \quad \mbox{and} \quad -\frac{1}{2} \leq \delta,\delta' < \frac{1}{2}.
$$
Furthermore, define $\epsilon,\epsilon' \in 2^\mathbb{Z} \cap [\frac{1}{N},\frac{1}{2}]$ by $|\delta| \sim \epsilon$ if $|\delta| > \frac{1}{N}$; and $\epsilon \sim \frac{1}{N}$ if $|\delta| < \frac{1}{N}$ so that
$$
S_1(t) \lesssim \sum_{n_1,n_1'} \int \frac{1}{\epsilon \epsilon'} \,d\alpha_{11}.
$$
We now argue as follows:
\begin{itemize}
	\item For $\epsilon,\epsilon'$ fixed, $\nu$ and $\nu'$ satisfy
	$$
	| \nu p_1' - \nu' p_1 - C| < \epsilon |p_1'| + \epsilon' |p_1|,
	$$
	where $C$ is a constant dedending on $n$, $n'$, $\alpha_{12}$ and $t$. The number of $(\nu,\nu')$ satisfying this constraint is
	$$
	\preceq tk ( \epsilon |p_1'| + \epsilon' |p_1| + 1).
	$$
	Indeed, for each fixed value of $\nu p_1' - \nu' p_1$, $\nu$ is determined modulo $p_1$, and it ranges in an interval of size $\sim t|n_1|$, leaving $\lesssim \frac{t|n_1|}{|p_1|}=tk$ possibilities.  Finally, the number of possible values for $\nu p_1' - \nu' p_1$ is $\lesssim \epsilon |p_1'| + \epsilon' |p_1| + 1$.
	\item Furthermore, given $\nu,\nu',\epsilon,\epsilon'$, the measure of possible $\alpha_{11}$ is 
	$$
	\lesssim \min \left( \frac{\epsilon}{t|n_1|} , \frac{\epsilon'}{t|n_1'|} \right).
	$$
\end{itemize}
This leads to the bound
\begin{align*}
|S_1(t)| \preceq \sum_{\epsilon,\epsilon'} \frac{1}{\epsilon \epsilon'} tk ( \epsilon |p_1'| + \epsilon' |p_1| + 1) \min \left( \frac{\epsilon}{t |n_1|} , \frac{\epsilon'}{t|n_1'|} \right) \lesssim \sum_{\epsilon,\epsilon'} 1 + \frac{1}{\epsilon' |p_1|} \preceq \frac{N}{|p_1|},
\end{align*} the bound involving $p_1'$ following by symmetry.

\subsection{Proof of Lemma~\ref{merganser1}}
Squaring $K_N$, and using that $(\alpha_{ij})$ is symmetrical, gives
$$
|K_N(t,x)|^2 = \sum_{n \in \mathbb{Z}^2} \sum_{n' \in \mathbb{Z}^2}\chi \left( \frac{n_1}{N} \right) \chi \left( \frac{n_2}{N} \right) \chi \left( \frac{n_1'}{N} \right) \chi \left( \frac{n_2'}{N} \right) e^{2\pi i \left[ \sum x_i (n_i-n_i') + t \sum_{i,j} \alpha_{ij} (n_i -n_i') (n_j + n_j') \right]}.
$$
Changing variables to $r = n-n'$, $s = n+n'$, this becomes
$$
|K_N(t,x)|^2 = \sum_{r \in \mathbb{Z}^2} e^{2\pi i \sum r_i x_i} \sum_{s \in \mathbb{Z}^2}^* \chi \left( \frac{r_1+s_1}{2N} \right) \chi \left( \frac{r_2 + s_2}{2N} \right) \chi \left( \frac{s_1 - r_1}{2N} \right) \chi \left( \frac{s_2 - r_2}{2N} \right) e^{2\pi i  t \sum_{i,j} \alpha_{ij} r_i s_j},
$$
where $\sum_s^*$ means that the sum is restricted to these $s$ such that, for all $i$, $s_i$ and $r_i$ have the same parity. The second sum above factors into a sum over $s_1$ times a sum over $s_2$; we focus on the sum over $s_1$, which reads
$$
\sum_{s_1}^* \chi \left( \frac{r_1+s_1}{2N} \right) \chi \left( \frac{s_1 - r_1}{2N} \right) e^{2\pi i t s_1 \sum_{j} \alpha_{1j} r_j}.
$$
By Abel summation, the modulus of this sum is
$$
\dots \lesssim \min \left( N, \frac{1}{ \left\|2t \sum \alpha_{1j} r_j \right\|} \right).
$$
Overall, we find
$$
|K_N(t,x)|^2  \lesssim \sum_{r_1=-2N}^{2N} \sum_{r_2=-2N}^{2N} \prod_{j=1}^{2} \min\left( N, \frac{1}{\|2 t L_j(r) \|} \right),
$$
where $L_j(r) = \sum \alpha_{ij} r_i$. 

\subsection{Proof of Lemma~\ref{merganser2}} 
\underline{(i) The case $n \times n' = 0$.} We argue by contradiction and start by assuming that $E_{\beta,\gamma,N,T}^{n,n'}$ is not empty. Since $n$ and $n'$ are aligned, they can be written
$$
n = kp, \qquad n' = kp',
$$
where $k \in \mathbb{Z}^2$ has relatively prime coordinates, and $p,p' \in \mathbb{Z}$. The inequalities~\eqref{goldfinch1} to \eqref{goldfinch4} become
\bigskip
\begin{align*}
& \left| t (\alpha_{11} k_1 + \alpha_{12} k_2 ) - \frac{m_1}{p} \right| \leq \frac{\beta}{N|p|} \\
& \left| t (\alpha_{12} k_1 + \alpha_{22} k_2 ) - \frac{m_2}{p} \right| \leq \frac{\beta}{N|p|} \\
& \left| t (\alpha_{11} k_1 + \alpha_{12} k_2 ) - \frac{m_1'}{p'} \right| \leq \frac{\gamma}{\beta N|p'|} \\
& \left| t (\alpha_{12} k_1 + \alpha_{22} k_2) - \frac{m_2'}{p'} \right| \leq \frac{\gamma}{\beta N|p'|} .
\end{align*}
This implies that, on the one hand,
$$
\left| \begin{pmatrix} m_1/p \\ m_2 / p  \end{pmatrix} - \begin{pmatrix} m_1'/p' \\ m_2'/p' \end{pmatrix} \right| \lesssim \frac{\beta}{N|p|} + \frac{\gamma}{\beta N|p'|}.
$$
But, on the other hand, by definition of the Minkowski minima, $(m,n)$ and $(m',n')$ cannot be colinear, therefore
$$
\left| \begin{pmatrix} m_1/p \\ m_2 / p  \end{pmatrix} - \begin{pmatrix} m_1'/p' \\ m_2'/p' \end{pmatrix} \right| \geq \frac{1}{|pp'|}.
$$
The two above inequalities imply that
$$
1 \lesssim \frac{\beta|p'| }{N} + \frac{\gamma |p|}{\beta N}.
$$
But this leads to a contradiction since
$$
\frac{\beta|p'| }{N} + \frac{\gamma |p|}{\beta N} \leq \frac{\beta|n'| }{N} + \frac{\gamma |n|}{\beta N} \lesssim \gamma \ll 1.
$$

\bigskip

\noindent \underline{(ii) The case $n \times n' \neq 0$.} 
In order to estimate the size of $E_{\beta,\gamma,N,T}^{n,n'}$, we proceed in three steps.
\begin{itemize}
\item First freezing $t$, we note that the system~\eqref{goldfinch1} to ~\eqref{goldfinch4} is overdetermined in $(\alpha_{ij})$, which results in a compatibility condition on $m,m'$. To derive this compatibility condition, observe that {solving for $\alpha_{12}$ }by~\eqref{goldfinch1} and~\eqref{goldfinch3}  or~\eqref{goldfinch2} and~\eqref{goldfinch4} gives, respectively,
\begin{align*}
& \alpha_{12} = \frac{1}{t(n \times n')} (-n_1' m_1 + n_1 m_1') + O \left( \frac{\gamma}{T |n \times n'|} \right) \\
& \alpha_{12} = \frac{1}{t(n \times n')} (n_2' m_2 - n_2 m_2') + O \left( \frac{\gamma}{T |n \times n'|} \right).
\end{align*}
Since $\gamma \ll1$, these two equalities can only hold if
\begin{equation}
\label{bluejay}
- n_1' m_1 + n_1 m_1' = n_2' m_2 - n_2 m_2'.
\end{equation}
To estimate the number of $m,m'$ staisfying the above, note that, on the one hand, 
by~\eqref{baldeagle3},~\eqref{baldeagle4}, and~\eqref{mallard}, the number of possible choices for $m_1$ and $m_1'$ is $\lesssim \gamma(TN)^2$. On the other hand, the number of solutions $(m_2, m'_2)$ of~\eqref{bluejay} for $n,n',m_1,m_1'$ fixed is
$$
\lesssim 1 + \frac{T \gamma N \gcd(n_2,n_2')}{\beta |n_2'|}.
$$
Overall, the number of solutions of~\eqref{bluejay} in $(m,m')$ for $(n,n')$ fixed is thus
\begin{equation*}
\lesssim \gamma(TN)^2 \left( 1 + \frac{T \gamma N \gcd(n_2,n_2')}{\beta |n_2'|} \right).
\end{equation*}.
\item With $t$ still frozen, and now $m$ and $m'$ fixed, we use~\eqref{goldfinch1} and~\eqref{goldfinch3} to solve for $\alpha_{11}$ and $\alpha_{12}$. This gives a set of measure $\sim \frac{\gamma}{N^2 T^2} \frac{1}{|n \times n'|}$ . Next use~\eqref{goldfinch2} to solve for $\alpha_{22}$. This gives a set of measure $\sim \frac{\beta}{NT} \frac{1}{|n_2|}$. Symmetrically, one could use~\eqref{goldfinch4} to solve for $\alpha_{22}$, giving a set of measure $\sim \frac{\gamma}{\beta NT} \frac{1}{|n_2'|}$. By symmetry between the first and second coordinates, we get a bound
$\lesssim \frac{\gamma}{N^3T^3}\frac{1}{|n \times n'|} \min \left( \frac{\beta}{|n|}, \frac{ \gamma}{\beta|n'|} \right)$.

\item Now observe that if $t$ changes by an amount $dt \ll \frac{1}{N^2}$, the inequalities \eqref{goldfinch1} to \eqref{goldfinch4} need only be modified by a constant factor on the right-hand side. Since we want to cover the range $t \sim T$, it suffices to consider a number $O(N^2 T)$ of discrete times.
\end{itemize}

Overall, we find
$$
|E_{\beta,\gamma,N,T}^{n,n'}| \preceq \frac{\gamma^2 N}{|n \times n'|} \left( 1 + \frac{T \gamma N \gcd(n_2,n_2')}{\beta |n_2'|} \right) \min \left( \frac{\beta}{|n|}, \frac{ \gamma}{\beta|n'|} \right).
$$

\section{From Weyl sum estimates to Strichartz estimates}

\label{section3}

This section is dedicated to the proof of Theorem~\ref{thmweyl}.

\bigskip

\noindent
\underline{Step 1: decompositions of the kernel} Let $\phi$ be a smooth, real, non-negative function supported on $B(0,2)$ such that $\phi > 1$ on $B(0,1)$ and $\widehat{\phi} \geq 0$. For a number $A \in (0,\frac{1}{N})$ to be fixed later, decompose $\phi \left(\frac{t}{T}\right)K_N(t,x) $ into
\begin{align*}
\phi \left(\frac{t}{T}\right) K_N(t,x) = 
\underbrace{ \phi \left(\frac{t}{T}\right)\chi \left( \frac{t}{A} \right)K_N(t,x)}_{\displaystyle J_1(t,x)} 
& + \underbrace{\phi \left(\frac{t}{T}\right) \chi({Nt/2}) \left[ 1- \chi \left( \frac{t}{A}  \right)\right]K_N(t,x)}_{\displaystyle J_2(t,x)} \\
& + \underbrace{\phi \left(\frac{t}{T}\right) \left[ 1 - \chi({Nt/2}) \right]K_N(t,x)}_{\displaystyle J_3(t,x)}.
\end{align*}
By Lemma~\ref{dispersive} and Proposition~\ref{minkowski}, {and using the time support of $J_1$}, we find
\begin{align*}
& \| \widehat{J_1} \|_{L^\infty} \lesssim A \\
& \| J_2 \|_{L^\infty} \lesssim \frac{1}{A} \\
& \| J_3 \|_{L^\infty} \preceq N^{4/3} T^{1/6},
\end{align*}
{where the third bound holds for generic $\alpha_{ij}$.} Introducing the set
$$
S_\mu = \{ t \;\; \mbox{s.t.} \; t \geq 1 \; \mbox{and} \; \sup_x K_N(t,x) > \mu\},
$$
we learn from Proposition~\ref{minkowski} and Chebyshev's inequality that 
$$
|S_\mu| \preceq \frac{N^4 T}{\mu^4}.
$$
The above decomposition can be refined by letting
$$
J_3 = J_3 \mathbf{1}_{S_\mu} + J_3 (1-\mathbf{1}_{S_\mu}) = J_3' + J_3''.
$$
The bounds are now
\begin{align*}
& \| \widehat{J_3'} \|_{L^\infty} \preceq \frac{N^4T}{\mu^4} \\
& \| J_3'' \|_{L^\infty} \leq \mu,
\end{align*}
provided $\mu > N^{4/3}$ (in order to be able to estimate $\| J_3'' \|_{L^\infty([0,1]\times \mathbb{T}^2)}$).

\bigskip \noindent
\underline{Step 2: level set estimates.} We essentially follow the argument in Bourgain~\cite{B1}. Start with $f \in L^2 (\mathbb{T}^2)$ supported in Fourier on $B(0,N/2)$ and of norm 1: $\|f\|_{L^2(\mathbb{T}^2)} = 1$. Setting $F = e^{it \widetilde \Delta} f$, we want to estimate the size of 
$$
E_\lambda = \{ (x,t) \in \mathbb{T}^2 \times [0,T] \;\; \mbox{s.t.}  \;\; |F(x,t)| > \lambda \}.
$$
Set $\widetilde{F} = \frac{F}{|F|} {{1}\mkern-10mu{1}}_{E_\lambda}$ and $Q(k) = \sum_{ij} \alpha_{ij} k_i k_j$. We can bound, using successively Plancherel's theorem, the Cauchy Schwarz inequality, and again Plancherel's theorem,
\begin{align*}
\lambda^2 &|E_\lambda|^2  \lesssim \left[ \int_{\mathbb{T}^2 \times \mathbb{R}} \widetilde{F}(x,t) \overline{F(x,t) \phi \left( \frac{t}{T} \right)} \,\mathrm{d}x\,\mathrm{d}t \right]^2 \\
& = \left[ \sum_{k} \int \widehat{ \widetilde{F}}(\tau,k) \overline{T \widehat{f}_k \widehat{\phi}(T(\tau + Q(k)))} \chi \left( \frac{k_1}{N} \right)^{1/2}  \chi \left( \frac{k_2}{N} \right)^{1/2} \,\mathrm{d}\tau \right]^2 \\
& \leq \left[ \sum_{k} \int_{\mathbb{R}} \left| \widehat{\widetilde{F}} (\tau,k) \right|^2 T \widehat{\phi}(T(\tau + Q(k))) \chi \left( \frac{k_1}{N} \right)  \chi \left( \frac{k_2}{N} \right)  \,\mathrm{d}\tau \right] \left[ \sum_k |\widehat{f}_k|^2 \int T \widehat{\phi}(T(\tau + Q(k))) \,\mathrm{d}\tau \right] \\
& \lesssim \sum_k \int \left| \widehat{\widetilde{F}} (\tau,k) \right|^2 T \widehat{\phi} (T(\tau + Q(k)))\chi \left( \frac{k_1}{N} \right)  \chi \left( \frac{k_2}{N} \right) \,\mathrm{d}\tau\\
& = \int \left[ \left( K_N \phi \left(\frac{.}{T}\right) \right) * \widetilde{F} \right] (t,x) \overline{\widetilde{F}(t,x)} \,\mathrm{d}x\,\mathrm{d}t.
\end{align*}
Now using the first decomposition of Step 1,
\begin{align*}
\lambda^2 |E_\lambda|^2 & \lesssim \left< (J_1 + J _2 + J_3) * \widetilde{F}\,,\,\widetilde{F} \right> \\
& \lesssim \|\widehat{J_1}\|_{L^\infty} \|\widetilde{F}\|_{L^2}^2 + \left(\|J_2\|_{L^\infty}  + \|J_3\|_{L^\infty} \right) \|\widetilde{F}\|_{L^1}^2 \\
& \preceq A |E_\lambda| + \left( \frac{1}{A} +  N^{4/3} T^{1/6} \right) |E_\lambda|^2.
\end{align*}
Summarizing, we get if $A < \frac{1}{N}$
\begin{equation}
\label{levelsetestimate}
\lambda^2 |E_\lambda|^2 \preceq A |E_\lambda| + \left( \frac{1}{A} + N^{4/3} T^{1/6} \right) |E_\lambda|^2
\end{equation}
If we now resort to the refined decomposition of Step 1, we are led to
\begin{equation}
\label{levelsetestimate2}
\lambda^2 |E_\lambda|^2 \preceq \left( A + \frac{N^4 T}{\mu^4} \right) |E_\lambda| + \left( \frac{1}{A} + \mu \right) |E_\lambda|^2.
\end{equation}

\bigskip

\noindent
\underline{Step 3: from level set estimates to $L^p$ bounds.} 
Recall first that $\|F\|_{L^\infty(\mathbb{R} \times \mathbb{T}^d)} \lesssim N$ by the Sobolev embedding theorem. 
Choose next $\delta>0$. When estimating $|E_\lambda|$, three cases have to be distinguished:
\begin{itemize}
\item If $\lambda > N^{2/3+\delta} T^{1/12}$, then we choose $A = \frac{N^\delta}{ \lambda^{2}}$ (notice that $A < \frac{1}{N}$). The bound~\eqref{levelsetestimate} becomes then
$$
|E_\lambda| \preceq N^\delta \lambda^{-4}.
$$  
\item If $N^{2/3} < \lambda < N^{2/3+\delta} T^{1/12}$, then we choose $A = \frac{N^\delta}{\lambda^2}$, $\mu = \lambda^2 N^{-\delta/4}$, and appeal to~\eqref{levelsetestimate2} to obtain
$$
|E_\lambda| \lesssim N^\delta \left[ \lambda^{-4} + N^4 T \lambda^{-10} \right]{\lesssim N^{4+\delta} T\lambda^{-10}}.
$$
\item Finally, if $\lambda < N^{2/3}$, we rely on the Chebyshev inequality and the estimate $\|F\|_{L^{4}([0,T] \times \mathbb{T}^d)} \lesssim T^{1/4}$ (which follows from the $L^{4}$ bound of Bourgain-Demeter~\cite{BD2}) to obtain
$$
|E_\lambda| \lesssim T \lambda^{-4}.
$$
\end{itemize}
All in all, this gives for $4<p<10$
\begin{align*}
& \|F\|^p_{L^p([0,T] \times \mathbb{T}^d)} 
= p \int_{0}^{N} \lambda^{p-1} |E_\lambda|\,d\lambda \\
& \quad \preceq \int_0^{N^{2/3}} T \lambda^{p-5}\,d\lambda 
+ \int_{N^{2/3}}^{N^{2/3+\delta} T^{1/12}}N^\delta {N^4 T \lambda^{p-11}}\,d\lambda
+ \int_{N^{2/3+\delta} T^{1/12}}^{N} N^\delta \lambda^{p-5}\,d\lambda   \\
& \quad \preceq T(N^{2/3})^{p-4} + N^{p-4+\delta},
\end{align*}so Theorem ~\ref{thmweyl} is true.

\section{The counting argument through $\ell^2$ decoupling: {$p=14$}}

\label{section4}

We will prove Theorem~\ref{thmp8} for {$p \geq 14$}. {Note that the more general case $p\geq 10$ is proved in the next section, but the easier case here has a much simpler proof.} We want to estimate
$$
\| e^{it\widetilde{\Delta}} f \|_{L^p([0,T] \times \mathbb{T}^2)}^p
$$
for 
$$
f(y) = \sum \widehat{f}_k e^{2\pi i k \cdot y}.
$$ 
{By interpolation with $p=\infty$, we may assume $p=14$.} By splitting into dyadic scales, and absorbing the sum over scales in the subpolynomial factor, it suffices to consider the case where
$$
f(y) = \sum \widehat{f}_k {\mathbf{1}}_S(k) e^{2\pi i k \cdot y},
$$
where $S \subset B(0,N) \cap \mathbb{Z}^2$ and $\widehat{f}_k \sim 1$.

Our aim is to prove the following

\begin{thm}\label{14case}
With $f$ as above, and generically in $(\alpha_{ij})$,
$$
\| e^{it \widetilde{\Delta}} f \|_{L^p([0,T]\times \mathbb{T}^2)}^p
\preceq (N^4 + T) N^{p-8}|S|^{\frac{p}{2}}
\qquad \text{if }{p =14}.
$$
\end{thm}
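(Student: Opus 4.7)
Writing $p=2q$, Plancherel on $\mathbb{T}^2$ and direct integration over $[0,T]$ expand
\[
\|e^{it\widetilde{\Delta}}f\|_{L^p([0,T]\times\mathbb{T}^2)}^p
=\sum_{\substack{(k,\ell)\in S^{2q}\\k_1+\cdots+k_q=\ell_1+\cdots+\ell_q}}
\widehat f_{k_1}\cdots\overline{\widehat f_{\ell_q}}\cdot\frac{e^{2\pi iT\Omega}-1}{2\pi i\Omega},
\]
where $\Omega=\sum Q(k_i)-\sum Q(\ell_j)$. Since $|\widehat f_k|\sim1$ on $S$ and the integral factor is bounded by $\min(T,1/|\Omega|)$, the task reduces to estimating $\Sigma_T:=\sum\min(T,1/|\Omega|)$ over the tuples above.

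\textbf{Step 2 (Splitting and short-time BD).} Dyadically split $\Sigma_T$ by the size of $|\Omega|$. For $|\Omega|>1$ we have $1/|\Omega|\leq\min(1,1/|\Omega|)$, so the Bourgain--Demeter $\ell^2$-decoupling for the paraboloid applied on $[0,1]\times\mathbb{T}^2$ gives
\[
\sum_{|\Omega|>0}\min(1,1/|\Omega|)\leq\|e^{it\widetilde{\Delta}}f\|_{L^p([0,1]\times\mathbb{T}^2)}^p\lesssim N^{\epsilon+p-4}|S|^{p/2},
\]
which already accounts for the fixed $N^{p-4}|S|^{p/2}$ summand of the target. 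What remains is to estimate $T\cdot N_{\leq1/T}$ together with $\sum_{\delta}\delta^{-1}N_{\sim\delta}$ over dyadic $\delta\in[1/T,1]$, where $N_{\sim\delta}=\#\{(k,\ell):|\Omega|\in[\delta/2,\delta]\}$.

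\textbf{Step 3 (Counting via decoupling and Borel--Cantelli).} Write $\Omega=\sum_{i\leq j}\alpha_{ij}M_{ij}$ with $M=(M_{ij})\in\mathbb{Z}^3$ and $|M|\lesssim N^2$. The exact-resonance count $R(0)=\#\{(k,\ell)\in S^{2q}:\sum k_i=\sum\ell_j,\,M=0\}$ is the quadratic Parsell--Vinogradov count $J_{q,2,2}(N)$, and the Bourgain--Demeter decoupling for the Parsell--Vinogradov surface (in its sharp $\ell^2$ range precisely for $p\geq16$) yields $R(0)\lesssim N^{\epsilon+2p-10}$. A Cauchy--Schwarz argument on the associated convolution produces the autocorrelation inequality $R(M)\leq R(0)$ for every $M$. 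Finally, Borel--Cantelli applied to generic symmetric $(\alpha_{ij})$ (a Khinchin--Groshev type count) supplies
\[
\#\{M\in\mathbb{Z}^3\setminus\{0\}:|M|\lesssim N^2,\,|\alpha\cdot M|\leq\delta\}\lesssim N^{\epsilon}(1+\delta N^6),\qquad \delta\leq1.
\]
Combining these ingredients gives $N_{\sim\delta}\lesssim R(0)\,N^\epsilon\,\delta\,N^6$, so the dyadic contribution in Step~2 is $\sum_{\delta}\delta^{-1}N_{\sim\delta}\lesssim R(0)N^{6+\epsilon}\log T\lesssim N^{\epsilon+2p-4}$, while $T\cdot N_{\leq1/T}\lesssim TR(0)+R(0)N^{6+\epsilon}\lesssim TN^{\epsilon+2p-10}+N^{\epsilon+2p-4}$. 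Adding the contributions yields $\Sigma_T\lesssim(N^4+T)N^{p-8}|S|^{p/2}$, which completes the proof.

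\textbf{Main obstacle.} The crux is Step~3: one must weld together the Bourgain--Demeter exact-resonance count with the Diophantine Borel--Cantelli bound on near resonances via the autocorrelation inequality $R(M)\leq R(0)$. The threshold $p\geq16$ is precisely what places the Parsell--Vinogradov decoupling in its sharp $\ell^2$ range, keeping the exact-resonance cost $N^{2p-10}$ safely below the target $N^{2p-8}$.
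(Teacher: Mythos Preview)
Your overall scheme---expand, group by the integer triple $M=(A,B,C)$, bound the count $R(M)$ uniformly via Bourgain--Demeter, and sum over $M$ using the Diophantine genericity of $(\alpha_{ij})$---is exactly the route the paper takes. But there is a real gap in Step~3 that keeps the argument from closing.

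You bound $R(M)\le R(0)$ (correct, and a pleasant observation) and then $R(0)\le J_{q,2,2}(N)\lesssim N^{2p-10+\epsilon}$. This is true but discards the dependence on $|S|$: what your steps actually yield is
\[
\Sigma_T\ \lesssim\ N^{p-4+\epsilon}|S|^{p/2}\;+\;(N^4+T)\,N^{2p-10+\epsilon},
\]
not the claimed $(N^4+T)N^{p-8}|S|^{p/2}$. These agree only when $|S|\sim N^2$; for small $S$ the second term is far too large, and the theorem must hold for \emph{every} $S\subset B(0,N)\cap\mathbb Z^2$ (that is the whole point of the dyadic level-set reduction preceding the statement). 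The missing idea is to upgrade the per-$M$ bound to $R(M)\lesssim N^{p-8+\epsilon}|S|^{p/2}$: freeze $p/2-1$ of the points in $S$ and apply the Bourgain--Demeter estimate $|S^{p'}\cap\Omega^{p',N}_{a,b,A,B,C}|\lesssim N^{2p'-10}|S|$ to the remaining $p'=p/2+1\ge 8$ points. It is this freezing step---not any special range of Parsell--Vinogradov decoupling at $p=16$---that manufactures the $|S|^{p/2}$ factor and is the true reason for the threshold.

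Two smaller points. The Diophantine count in Step~3 should be $\lesssim 1+\delta N^{4}$, not $1+\delta N^{6}$: since $\alpha_{11}=1$, for each pair $(B,C)$ with $|B|,|C|\lesssim N^2$ the integer $A$ is essentially determined by $|A+B\alpha+C\beta|<\delta$. And the displayed inequality in Step~2,
\[
\sum_{|\Omega|>0}\min(1,1/|\Omega|)\ \le\ \|e^{it\widetilde\Delta}f\|_{L^p([0,1]\times\mathbb T^2)}^p,
\]
is not valid as written: the right-hand side is an oscillatory sum with complex weights $\widehat f_{k_1}\cdots\overline{\widehat f_{\ell_q}}\int_0^1 e^{2\pi it\Omega}\,dt$ and cannot be bounded below termwise. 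That step is in any case unnecessary: once you have the uniform bound $R(M)\lesssim N^{p-8}|S|^{p/2}$, the full range $|\Omega|\lesssim N^2$ is handled by the counting in Step~3.
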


\begin{proof}
Here and in the following, it will be convenient to denote
$$
\begin{pmatrix} \alpha_{11} & \alpha_{12} \\ \alpha_{12}&  \alpha_{22} \end{pmatrix} = \begin{pmatrix} 1 & \beta \\ \beta &  \alpha \end{pmatrix}.
$$
We define $\Omega^{p,N}_{a,b,A,B,C} $ to be the set of $(k_i,\ell_i) \in (\mathbb{Z}^2)^p$  such that $|k| < N$, $|\ell| < N$ and 
\begin{align}
\label{pinguin}
\sum_{i=1}^p (-1)^i k_i = a, \;\; \sum_{i=1}^p (-1)^i \ell_i = b, \;\; \sum_{i=1}^p (-1)^i k_i^2 = A, \;\;\sum_{i=1}^p (-1)^i \ell_i^2 = B,\;\;  \sum_{i=1}^p (-1)^i k_i \ell_i = C.
\end{align}
The result of Bourgain and Demeter~\cite{BD2} implies that
$$
|S^p \cap \Omega_{a,b,A,B,C}^{p,N}| \preceq N^{2p-10} |S| \qquad \mbox{for $p \geq 8$}.
$$
This implies that
\begin{equation}
\label{oriole}
|S^p \cap \Omega^{p, N}_{a,b,A,B,C}| \preceq N^{p-8} |S|^{\frac{p}{2}} \quad \mbox{{for $p=14$}}.
\end{equation}
Indeed, observe that one can first choose $k_i,\ell_i$ for $i = 1,\dots, \frac{p}{2} - 1$. There are $|S|^{\frac{p}{2}-1}$ possibilities. For the remaining $k_i,\ell_i$ (ie $i \geq \frac{p}{2}$, we use the estimate of Bourgain-Demeter, leading to a number of solutions $O(N^{p-8}|S|)$. Thus the total number of solutions is $O(N^{p-8}|S|^{\frac{p}{2}})$.

We can write
\begin{align*}
\| e^{it \widetilde{\Delta}} f \|_{L^p([0,T]\times \mathbb{T}^2)}^p
& = \sum_{A,B,C}  \sum_{(k_i,\ell_i) \in \Omega^{p,N}_{0,0,A,B,C}} \widehat{f}_{k_1, \ell_1} \dots \overline{\widehat{f}_{k_p, \ell_p}} {\mathbf{1}}_S(k_1, \ell_1) \dots  {\mathbf{1}}_S(k_p, \ell_p)
\frac{1- e^{2\pi iT(A+B\alpha + C \beta)}}{2\pi i(A+B\alpha + C \beta)} \\
& \lesssim \sum_{2^j > \frac{1}{T}} \sum_{|A + B \alpha + C \beta| \lesssim 2^{j}} 2^{-j} \sum_{\Omega^{p,N}_{0,0,A,B,C}} {\mathbf{1}}_S(k_1 \ell_1) \dots  {\mathbf{1}}_S(k_p \ell_p) \\
& = \sum_{2^j > \frac{1}{T}} \sum_{|A + B \alpha + C \beta| \lesssim 2^{j}} 2^{-j} |S^p \cap \Omega^{p,N}_{0,0,A,B,C}|.
\end{align*}
On the one hand, we can use~\eqref{oriole} to bound $|S^p \cap \Omega^{p,N}_{0,0,A,B,C}|$. On the other hand, by genericity of $\alpha$ and $\beta$,
$$
\sum_{|A + B \alpha + C \beta|<2^j} 1 \lesssim N^4 2^{j} + 1.
$$ {The above can be shown for example by summing in $(A,B,C)$ and integrating in $(\alpha,\beta)$ the characteristic function of $|A + B \alpha + C \beta|<2^j$, then applying Borel-Cantelli lemma.}
Overall, we find
$$
\| e^{it \widetilde{\Delta}} f \|_{L^p([0,T]\times \mathbb{T}^2)}^p \preceq \sum_{ \frac{1}{T}< 2^j \lesssim N^2} (N^4 + 2^{-j}) N^{p-8} |S|^{p/2} \preceq (N^4 + T) N^{p-8}|S|^{\frac{p}{2}}.
$$
\end{proof}

\section{The counting argument through Pall's bound: $p=8$}

\label{section5}

With the same ansatz for $f$ as in the previous section, we are going to prove the following theorem, which implies Theorem~\ref{thmp8} for $p=8$.

\begin{thm}\label{handsaw}
Generically in $(\alpha_{ij})$,
$$
\| e^{it \widetilde{\Delta}} f \|_{L^8([0,T]\times \mathbb{T}^2)}^8
\lesssim
|S^4|(N^4+NT).
$$
\end{thm}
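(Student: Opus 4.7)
The plan is to expand
$$
\|e^{it\widetilde{\Delta}}f\|_{L^8([0,T]\times\mathbb{T}^2)}^8 \lesssim \sum_{\substack{(k_i,\ell_i)_{i=1}^8\in S^8\\ \sum_i(-1)^i k_i=\sum_i(-1)^i\ell_i=0}} \min\bigl(T,|\Omega|^{-1}\bigr),
$$
with $\Omega = \alpha_{11}A+\alpha_{22}B+2\alpha_{12}C$ and $(A,B,C)=(\sum_i(-1)^i k_i^2,\sum_i(-1)^i\ell_i^2,\sum_i(-1)^i k_i\ell_i)$, exactly as was done in Section~\ref{section4}. Then, following the suggestion of Section~\ref{sec:count_arg}, I would sum trivially over the four odd-indexed pairs $(k_1,\ell_1),(k_3,\ell_3),(k_5,\ell_5),(k_7,\ell_7)\in S$, gaining a factor $|S|^4$ and leaving as parameters $a\in\Z^2$ and $\beta\in\R$ their common vector sum and their common quadratic-form sum. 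This reduces the theorem to the uniform estimate
$$
\sup_{a,\beta}\biggl[T\cdot M(a,\beta,1/T)+\sum_{1/T\leq 2^j\leq N^2}2^{-j}\, M(a,\beta,2^j)\biggr] \preceq N^4+NT,
$$
where
$$
M(a,\beta,\delta)=\#\Bigl\{(v_1,v_2,v_3,v_4)\in S^4:\ \textstyle\sum_i v_i=a,\ |\sum_i Q(v_i)-\beta|\leq \delta\Bigr\}
$$
and $Q$ is the quadratic form associated to $(\alpha_{ij})$. A dyadic check shows that this in turn follows once one establishes the pointwise bound $\sup_{a,\beta}M(a,\beta,\delta)\preceq N+\delta N^4$.

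To handle the dependence on $\beta$ I would invoke the symmetrisation identity emphasised in the introduction: for any integer $\kappa\geq 2$,
$$
M(a,\beta,\delta)\leq \Bigl(\#\bigl\{(x^{(1)},\dots,x^{(\kappa)})\in\mathcal{X}_a^\kappa\,:\,|f(x^{(1)})-f(x^{(j)})|\lesssim \delta,\ 2\leq j\leq\kappa\bigr\}\Bigr)^{1/\kappa},
$$
with $\mathcal{X}_a=\{x\in S^4:\sum_i v_i=a\}$ and $f(x)=\sum_i Q(v_i)$; the right-hand side no longer depends on $\beta$. Observing that $f(x)$ is a function only of the Gram matrix $G_x=\sum_i v_i v_i^T\in\Z^{2\times 2}_{\mathrm{sym}}$, the remaining problem becomes one of counting $4$-pair configurations with prescribed linear sum and (nearly) prescribed Gram matrix. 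Fixing one of the four pairs in each $x^{(j)}$ reduces matters further to a $3$-pair problem with prescribed Gram, i.e.\ to the system
$$
k_1^2+k_2^2+k_3^2=A,\qquad \ell_1^2+\ell_2^2+\ell_3^2=B,\qquad k_1\ell_1+k_2\ell_2+k_3\ell_3=C,
$$
whose exact count is furnished by Pall's formula \cite{PAL}. Summing Pall's divisor-theoretic output against the remaining linear constraint, and against the slab condition $|\alpha_{11}A+\alpha_{22}B+2\alpha_{12}C-\beta|\lesssim\delta$ (for which the genericity of $(\alpha_{ij})$ enters via a Borel--Cantelli argument on integer triples $(A,B,C)$), then controls $M$.

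The hard part is to carry out all these reductions quantitatively sharply enough to deliver exactly $N+\delta N^4$, with no parasitic losses. Each use of the $\kappa$-fold symmetrisation loses a root; each passage from $4$ pairs to $3$ pairs costs a factor $|S|$; and Pall's formula produces divisor sums that must be summed against the slab condition and the linear constraint. The delicate point, absent from the decoupling-based argument of Section~\ref{section4}, is to balance the ``diagonal'' $N$ contribution to $M$ (from arithmetic coincidences) against the ``continuous'' $\delta N^4$ contribution, presumably by choosing $\kappa$ adaptively in different dyadic ranges of $\delta$ and by repeatedly returning to the simple estimate $\#\{x:|g(x)-\nu|<\delta\}\leq N_\kappa^{1/\kappa}$ to swap an unfavourable parameter for a more tractable pairwise count.
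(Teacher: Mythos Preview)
Your outline matches the paper's at the macroscopic level: the target bound $\sup_{a,\beta}M(a,\beta,\delta)\preceq N+\delta N^4$ is exactly what the paper proves (combining \eqref{flamingo} and \eqref{buzzard}), the reduction to a $3$-pair Gram system is the right move, and Pall's formula together with a Borel--Cantelli argument for genericity are indeed the main tools. But two concrete pieces of your plan are either inaccurate or not yet a proof.

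First, the passage from four pairs to three. You write ``fixing one of the four pairs in each $x^{(j)}$''. Taken literally this costs a factor $|S|\le N^2$, which is fatal. The correct reduction (the paper's \eqref{elf_owl}--\eqref{sing_kookaburra}) uses the linear constraint $\sum_{i=1}^4 v_i=a$ to pass, by an \emph{invertible} integer change of variables, to an unconstrained $3$-pair system $\sum_{i=1}^3 (k_i'')^2=A'$, etc. No pair is ``fixed''; the linear constraint is absorbed for free.

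Second, and more seriously, the heart of the proof is not captured by your ``$\kappa$-fold symmetrisation with adaptive $\kappa$''. After Pall's Corollary~\ref{twite} gives $\#\{\cdots\}\preceq h$ with $h^2\mid\gcd(A',B',C')$, one must still bound
\[
\sum_{h}\ \sum_{\substack{U,V,W\lesssim N^2/h^2\\ |U+V\alpha+W\beta-\tau'/h^2|<2^j/h^2\\(U,V,W)\neq 0}} h
\]
uniformly in $\tau'$. The small-$h$ range $h\le N^{1/2}$ is easy, but $N^{1/2}\le h\le N$ is not. The paper handles it by \emph{cubing} the inner lattice-point count $\Phi(\alpha,\beta,\tau')$ and then splitting the resulting triple sum according to whether the three points $h_i^2(U_i,V_i,W_i)$ are collinear through the origin, collinear but not through the origin, or in general position (Lemma~\ref{gazunk} and the auxiliary Lemmas~\ref{zapdos}, \ref{cinnamologus}). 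The off-diagonal piece is then averaged over $(\alpha,\beta)$ and the determinant $\Delta$ from \eqref{billdad} controls the measure; this is where the Borel--Cantelli step actually lives. Your symmetrisation of $M$ itself, before invoking Pall, does not give access to this determinant structure, and I do not see how ``choosing $\kappa$ adaptively'' would reproduce it. Finally, note that the degenerate locus $A'B'=(C')^2$, where Pall's bound blows up, requires its own Borel--Cantelli argument (Lemma~\ref{8-variables-singular-case}); this case is absent from your sketch.
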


Before starting, we first record an elementary estimate, which will be useful in the proofs below: for $a,\cdots,f\in\mathbb{R}$ and $\varepsilon>0$, we have
\begin{equation}\label{elem}
\begin{split}\mathrm{meas}\left\{(x,y)\in[1,2]^2:|ax+by+c|<\varepsilon\right\}&\lesssim \frac{\varepsilon}{\max(|a|,|b|,|c|)},\\
\mathrm{meas}\left\{(x,y)\in[1,2]^2:|ax+by+c|<\varepsilon,\,|dx+ey+f|<\varepsilon\right\}&\lesssim\frac{\varepsilon^2}{\max(|ae-bd|,|af-cd|,|bf-ce|)}.
\end{split}
\end{equation} We only prove the second inequality, since the first one is similar and easier. Let the given set be $A$, define the set
\[B=\left\{(x,y,z)\in[1,4]^3:|ax+by+cz|<2\varepsilon,\,|dx+ey+fz|<2\varepsilon\right\},
\] then by fixing one of the variables we easily deduce that \[\mathrm{meas}_{\mathbb{R}^3}(B)\lesssim\frac{\varepsilon^2}{\max(|ae-bd|,|af-cd|,|bf-ce|)}.\] Moreover we have
\[\mathrm{meas}_{\mathbb{R}^3}(B)\geq \int_{1}^2\mathrm{meas}_{\mathbb{R}^2}(A_z)\,\mathrm{d}z,\quad A_z:=\left\{(x,y)\in[1,4]^2:|a\frac{x}{z}+b\frac{y}{z}+c|<\frac{2\varepsilon}{z},\,|d\frac{x}{z}+e\frac{y}{z}+f|<\frac{2\varepsilon}{z}\right\},
\] and that $zA\subset A_z$ for $z\in[1,2]$, so a bound for $\mathrm{meas}_{\mathbb{R}^3}(B)$ implies the same bound for $\mathrm{meas}_{\mathbb{R}^2}(A)$.

\subsection{Pall's formula}

Our technical tool will be a formula from a paper of Pall~\cite{PAL}. Let $A',B',C'\in \Z$. We apply Pall's formula~(43) to  the quadratic form $\phi(x,y) = A'x^2+B'y^2+2C'xy$. This gives
\begin{lem}\label{redpoll}
	If  $A'B'-C^{\prime 2}> 0$, so that $\phi$ is positive definite, then
 \begin{equation*}
\sum_{\substack{ \sum_{i=1}^3 (k''_i)^2 = A',\\\sum_{i=1}^3 (\ell''_i)^2 = B',\\  \sum_{i=1}^3 k''_i \ell''_i = C' }} 1
=24 \cdot 2^\nu \prod_{p\mid 2k\Delta} \chi(p)
\end{equation*}
where $k_i'',\ell_i''\in\mathbb{Z}$, $k = \gcd(A',B',C')$, $ \Delta = (A'B'-C')/k^2$, we let $\nu$ be the number of distinct odd prime factors of $k\Delta$
and $\chi$ is defined in the comments after~(43) and at the start of page~358 in~\cite{PAL}.
\end{lem}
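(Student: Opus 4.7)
\textbf{Proof proposal for Lemma \ref{redpoll}.}

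The plan is to recognize the left-hand side as a classical representation number for a binary quadratic form, and then invoke the explicit formula (43) of Pall \cite{PAL} with the correct choice of parameters. Concretely, for each $i \in \{1,2,3\}$ the pair $(k''_i,\ell''_i)\in\mathbb{Z}^2$ is a column, and stacking them into a $3\times 2$ matrix $M=(k''_i\mid \ell''_i)$, the three defining constraints
$$\sum_i (k''_i)^2 = A',\qquad \sum_i (\ell''_i)^2 = B',\qquad \sum_i k''_i\ell''_i = C'$$
are exactly the statement that the Gram matrix $M^{T}M$ equals $\begin{pmatrix} A' & C'\\ C' & B'\end{pmatrix}$. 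Thus the sum on the left counts the number of integral representations of the positive definite binary form $\phi(x,y)=A'x^2+2C'xy+B'y^2$ (of determinant $A'B'-C'^2>0$) by the ternary form $x_1^2+x_2^2+x_3^2$ acting on $\mathbb{Z}^3$.

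The next step is to identify this representation number with the one computed in Pall's paper. First I would read off the \emph{content} of $\phi$, namely $k=\gcd(A',B',2C')$, and the primitive form $\phi/k$ of determinant $\Delta=(A'B'-C'^2)/k^2$; these are the invariants appearing in the statement (the ``$C'$'' in the numerator of $\Delta$ in the typeset version is a typographical slip for $C'^2$). I would then locate Pall's formula (43), which gives the number of essentially distinct representations of such a binary form by the sum-of-three-squares form as a product of local densities, expressed in the form $2^\nu\prod_{p\mid 2k\Delta}\chi(p)$, where $\nu$ counts the distinct odd prime divisors of $k\Delta$ and $\chi(p)$ is the local factor Pall defines in the commentary following (43) and on page 358. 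The prefactor $24$ accounts for the automorphism group of $x_1^2+x_2^2+x_3^2$ (the signed permutations of coordinates, $|\mathrm{O}(3,\mathbb{Z})|=48$, divided appropriately since Pall's count is of $\mathrm{O}(3,\mathbb{Z})$-orbits and we want all representations with multiplicity).

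The only substantive checking to be done is bookkeeping: confirming that our normalization of $\phi$ (coefficient $2C'$ on the cross term, so that the Gram matrix is exactly $\bigl(\begin{smallmatrix}A'&C'\\C'&B'\end{smallmatrix}\bigr)$) matches the convention in Pall's (43), and that our counting of ordered triples $(k''_i,\ell''_i)$ corresponds to Pall's counting of class representatives times the size of the automorphism group. I expect the main obstacle to be purely notational: Pall's paper uses pre-war conventions (Gaussian versus Legendre normalization of binary forms, genus versus class counts, and a local factor $\chi(p)$ whose definition is spread across several pages). No genuinely new mathematics is required; once the dictionary between his notation and ours is fixed, the identity is a direct quotation of (43).
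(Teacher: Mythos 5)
Your proposal is correct and takes essentially the same route as the paper, which likewise just observes that the sum counts integral representations of the binary form $\phi(x,y)=A'x^2+2C'xy+B'y^2$ by the sum of three squares and quotes Pall's formula~(43) directly (the factor $24\cdot 2^{\nu}\prod\chi(p)$, including the $24$, is read off from Pall rather than reconstructed from an automorphism count). The only bookkeeping caveat is that the paper takes $k=\gcd(A',B',C')$, i.e.\ the gcd of the Gram-matrix entries, rather than the polynomial content $\gcd(A',B',2C')$ you suggest, and your reading of $\Delta=(A'B'-C'^{2})/k^{2}$ as correcting a typographical slip matches the intended statement.
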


For the reader's convenience we repeat the definition of $\chi$ in full here. Given a prime $p$, let $p^{u_1} $ be the largest power of $p$ dividing $k$, let  $p^{u_2-u_1}$ be the largest power of $p$ dividing $\Delta$ and write $\delta_1 = \left\lfloor \frac{u_1+1}{2}\right \rfloor$. 
If $p=2$ then $\chi(p)=0$ or 1 according to the cases set out in Pall's formula~(44). If $p\neq 2$ we let $\phi_1 = \phi/k$ and adopt the convention that, if $p\mathrel{\mid}\Delta$, then $\legendre{\phi_1}{p}=\legendre{\phi_1(x,y)}{p}$ for any $x,y\in\Z$ such that $\phi_1(x,y)$ is prime to $p$. We further define quantities $\kappa_1$ and $\kappa_2$ by
\begin{equation*}
\begin{tabular}{ccl}
 $\kappa_1$		&	$\kappa_2$
 \\
 1			&	$\frac{1}{2}+\frac{1}{4}\left( 1 + \legendre{-kp^{-u_1}}{p}\legendre{\phi_1}{p} \right)(u_2-u_1)$		&if $u_1$ and $u_2$ even,
 \\
 $\frac{1}{2}\left(1+ \legendre{-kp^{-u_1}}{p}\legendre{\phi_1}{p} \right)$	&	$\frac{1}{4}\left( 1 + \legendre{-kp^{-u_1}}{p}\legendre{\phi_1}{p} \right)(u_2+1-u_1)$	&{if $u_1$ even and $u_2$ odd},
 \\
$\frac{1}{2}\left(1+ \legendre{-k\Delta p^{-u_2}}{p}\legendre{\phi_1}{p} \right)$	&	0	&{if $u_1$ odd and $u_2$ even},
 \\
 $\frac{1}{2}\left(1+ \legendre{-\Delta p^{u_1-u_2}}{p}\right)$	&	0	&{if $u_1$ and $u_2$ odd}.
 \end{tabular}
\end{equation*}
Then we can set
\begin{equation}
\label{linnet}
\chi(p) = \kappa_1 (p^{\delta_1}-1)/(p-1) + \kappa_2 p^{\delta_1}.
\end{equation}

\begin{cor}\label{twite}
If $A'B'-C^{\prime 2}> 0$ and $A',B',C'\lesssim N^2$ then
$$
\sum_{\substack{ \sum_{i=1}^3 (k''_i)^2 = A',\\\sum_{i=1}^3 (\ell''_i)^2 = B',\\  \sum_{i=1}^3 k''_i \ell''_i = C' }} 1 \preceq h,
$$
where $h$ is the largest natural number such that
$
h^2 \mid \gcd(A',B',C')
$. 
\end{cor}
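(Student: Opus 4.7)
\emph{Proof sketch.} The plan is to apply Pall's formula (Lemma~\ref{redpoll}) and to bound each factor in the product over primes by the $p$-part of $h$, up to subpolynomial losses. Since $A',B',C'\lesssim N^2$ and $k=\gcd(A',B',C')$, we have $k\Delta = (A'B'-C'^2)/k \leq A'B'\lesssim N^4$, so the divisor bound gives $24\cdot 2^\nu \leq 24\,d(k\Delta) \preceq 1$. It therefore suffices to show $\prod_{p\mid 2k\Delta}\chi(p) \preceq h$.

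Fix an odd prime $p\mid k\Delta$ with $p^{u_1}\| k$ and $p^{u_2-u_1}\|\Delta$, and set $h_p := p^{\lfloor u_1/2\rfloor}$, so that the odd part of $h$ equals $\prod_{p\text{ odd}}h_p$. The key structural observation, which must be read off from the table defining $\kappa_1,\kappa_2$, is that $\kappa_2 = 0$ whenever $u_1$ is odd. If $u_1$ is odd we thus have $\chi(p) = \kappa_1(p^{\delta_1}-1)/(p-1)$ with $|\kappa_1|\leq 1$, and a geometric-series bound yields $\chi(p)\lesssim p^{\delta_1 - 1} = p^{(u_1-1)/2} = h_p$. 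If $u_1$ is even then $\delta_1 = u_1/2 = \lfloor u_1/2\rfloor$, so $p^{\delta_1} = h_p$; combining $|\kappa_1|\leq 1$ with the bound $|\kappa_2|\lesssim u_2 - u_1 + 1$ (also from the table) gives $\chi(p)\lesssim (u_2 - u_1 + 1)h_p$. In either case $\chi(p)\lesssim (u_2 - u_1 + 1)h_p$.

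Taking the product over odd primes dividing $k\Delta$ and using $\prod_p(u_2 - u_1 + 1)\leq d(k\Delta)\preceq 1$, we obtain
\[
\prod_{p\mid k\Delta,\,p\text{ odd}}\chi(p)\preceq\prod_{p\text{ odd}}h_p,
\]
which is the odd part of $h$. For $p=2$ the excerpt tells us $\chi(2)\in\{0,1\}$, and since $h_2 := 2^{\lfloor u_1(2)/2\rfloor}\geq 1$ we have $\chi(2)\leq h_2$. Combining these estimates yields $\prod_{p\mid 2k\Delta}\chi(p)\preceq h$, and the corollary follows.

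The main obstacle lies in the case analysis: one must verify from the table that $\kappa_2 = 0$ precisely when $u_1$ is odd. Without this structural fact, the potentially larger term $\kappa_2 p^{\delta_1} = \kappa_2 p^{(u_1+1)/2}$ would appear in the odd-$u_1$ case and cost an extra factor of $p$ per prime, destroying the $\preceq h$ bound. Once this observation is in hand, the remainder of the argument reduces to the divisor bound and a geometric-series estimate.
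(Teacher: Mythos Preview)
Your proof is correct and follows essentially the same approach as the paper: both apply Pall's formula, exploit the structural fact from the table that $\kappa_2=0$ when $u_1$ is odd (so that $\chi(p)\lesssim p^{\delta_1-1}=h_p$ in that case, while $\chi(p)\lesssim (u_2-u_1+1)p^{\delta_1}=(u_2-u_1+1)h_p$ when $u_1$ is even), and then absorb the factors $2^\nu$ and $\prod_p(u_2-u_1+1)=d(\Delta)$ via the divisor bound. The paper's write-up is more terse, but the argument is the same.
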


\begin{proof}
It is immediate from \eqref{linnet} and the preceding table that for odd $p$ we have
 $$
 \chi(p) \leq
 \begin{cases}
  2(u_2+1-u_1)p^{\delta_1} & u_1 \text{ even},
  \\
  2p^{\delta_1-1} & u_1 \text{ odd}.
 \end{cases}
 $$
Hence, where as in Lemma~\ref{redpoll} we let $\nu$ be the number of distinct odd prime factors of $k\Delta$,
\begin{align*}
\prod_{ p \mathrel{\mid} k\Delta } \chi(p)& \leq 2^\nu \left( \prod_{ p \mathrel{\mid} k\Delta } (u_2+1-u_1)\right)\left( \prod_{ p \mathrel{\mid} k\Delta } p^{\left\lfloor \frac{u_1}{2}\right\rfloor} \right)
\\
& = 2^\nu d(\Delta) h
\end{align*}
on recalling the definitions of $k$ and $u_1$ in Lemma~\ref{redpoll} and the subsequent comments. The result follows by the divisor bound.
\end{proof}

\subsection{Preliminaries}
We start with a discussion valid for any even $p$. Recall that
$$
\Omega^{p,N}_{a,b,A,B,C} = \{ (k_i,\ell_i)_{i=1,\dots,p} \in (\mathbb{Z}^2)^p \; \mbox{such that} \; |k_i| < N, \; |\ell_i| <N, \; \sum_{i=1}^p (-1)^i k_i = a, \text{etc.} \}.
$$
Define a version without the alternating signs,
$$
\Omega^{q,N,+}_{a,b,A,B,C} = \{ (k_i,\ell_i)_{i=1,\dots,q} \in (\mathbb{Z}^2)^q \; \mbox{such that} \; |k_i| < N, \; |\ell_i| <N, \; \sum_{i=1}^q k_i = a, \text{etc.} \}.
$$
Recall that
$$
\| e^{it \widetilde{\Delta}} f \|_{L^p([0,T]\times \mathbb{T}^2)}^p \lesssim \sum_{2^j > \frac{1}{T}} \sum_{|A + B \alpha + C \beta| \lesssim 2^{j}} 2^{-j} |S^p \cap \Omega^{p,N}_{0,0,A,B,C}|
$$
and observe (for $p/2$ even)
\begin{align}
\MoveEqLeft[2]\| e^{it \widetilde{\Delta}} f \|_{L^p([0,T]\times \mathbb{T}^2)}^{p}
\nonumber
\\
&\lesssim
\sum_{2^j > \frac{1}{T}} \sum_{\substack{a,b\lesssim N \\ A_i,B_i,C_i \lesssim N^2 \\ |(A_1-A_2) + (B_1-B_2) \alpha + (C_1-C_2) \beta| \lesssim 2^{j}}} 2^{-j} |S^{p/2} \cap \Omega^{p/2,N,+}_{a,b,A_1,B_1,C_1}| \cdot |S^{p/2} \cap \Omega^{p/2,N,+}_{a,b,A_2,B_2,C_2}|
\nonumber
\\
&\lesssim
\sum_{\frac{1}{T}< 2^j \lesssim N^2} 2^{-j} \lvert S^{p/2} \rvert \sup_{\substack{a,b\lesssim N \\ \tau\lesssim N^2} } \sum_{\substack{A_1,B_1,C_1 \lesssim N^2 \\ |A_1 + B_1 \alpha + C_1 \beta-\tau| < 2^{j}}}  \lvert\Omega^{p/2,N,+}_{a,b,A_1,B_1,C_1}\rvert.
\label{jacana}
\end{align} 

Restricting the discussion to $p=8$ from now on, we may write $k'_i = 4k_i-a$ and $\ell'_i=4\ell_i-b$ to obtain
\begin{equation}
\begin{gathered}
\sum_{i=1}^4 k_i = a, \;\; \sum_{i=1}^4 \ell_i = b, \;\; \sum_{i=1}^4 k_i^2 = A_1, \;\;\sum_{i=1}^4 \ell_i^2 = B_1,\;\;  \sum_{i=1}^4 k_i \ell_i = C_1
\\
\iff
\\
\sum_{i=1}^4 k'_i = 0, \;\; \sum_{i=1}^4 \ell'_i = 0, \;\; \sum_{i=1}^4 (k'_i)^2 = 16A_1-4a^2, \\
\sum_{i=1}^4 (\ell'_i)^2 = 16B_1-4b^2,\;\;  \sum_{i=1}^4 k'_i \ell'_i = 16C_1-4ab.
\end{gathered}
\label{elf_owl}
\end{equation}
So, writing
\begin{equation}
\begin{gathered}
k_1'' = k_2'+k_3',\;k_2''=k'_1+k'_3,\;k''_3=k_1'+k_2',\\
\ell_1'' = \ell_2'+\ell_3',\;\ell_2''=\ell'_1+\ell'_3,\;\ell''_3=\ell_1'+\ell_2',\\
A' = 16A_1-4a^2,\;B'=16B_1-4b^2,\; C'=16C_1-4ab,
\end{gathered}
\label{kookaburra}
\end{equation}
we get an injection
\begin{equation}
\Omega^{4,N,+}_{a,b,A_1,B_1,C_1}
\hookrightarrow
\{ (k''_i,\ell''_i)_{i=1,2,3} :
\sum_{i=1}^3 (k''_i)^2 = A', \;\;\sum_{i=1}^3 (\ell''_i)^2 = B',\;\;  \sum_{i=1}^3 k''_i \ell''_i = C'.\}
\label{sing_kookaburra}
\end{equation}
In particular, we may conclude from \eqref{jacana} that
\begin{align}
\| e^{it \widetilde{\Delta}} f \|_{L^8([0,T]\times \mathbb{T}^2)}^8
&\lesssim
\sum_{\frac{1}{T}< 2^j \lesssim N^2}
2^{-j} \lvert S^4\rvert \sup_{\tau'\lesssim N^2 }
\sum_{\substack{A',B',C' \lesssim N^2 \\ |A' + B' \alpha + C' \beta-\tau'| < 2^{j}  }}
\sum_{\substack{ \sum_{i=1}^3 (k''_i)^2 = A',\\\sum_{i=1}^3 (\ell''_i)^2 = B',\\  \sum_{i=1}^3 k''_i \ell''_i = C' }} 1
\nonumber
\\
& \preceq
\sum_{\frac{1}{T}< 2^j \lesssim N^2}
2^{-j}
\lvert S^4\rvert \sup_{\tau'\lesssim N^2 }
\Bigg(
(1+2^j)N +
\sum_{\substack{A',B',C' \lesssim N^2 \\ |A' + B' \alpha + C' \beta-\tau'| < 2^{j} \\ A'B'\neq 0 }}
\sum_{\substack{ \sum_{i=1}^3 (k''_i)^2 = A',\\\sum_{i=1}^3 (\ell''_i)^2 = B',\\  \sum_{i=1}^3 k''_i \ell''_i = C' }} 1
\Bigg)
\label{sandpiper}
\end{align}
 by  separating out the terms where $k'' = 0$ or $\ell''=0$ .

\subsection{The case $A'B'=(C')^2$}\label{sec:8_variables_singular_case}

The innermost sum in \eqref{sandpiper} is largest for values of $A', B', C'$ for which $A'B'=(C')^2$. We will need to deal with these degenerate terms separately. If $\sum_{i=1}^3 (k''_i)^2 \sum_{i=1}^3 (\ell''_i)^2 =(\sum_{i=1}^3 k''_i \ell''_i)^2 \neq 0$ then we must have
$$
k''_i = pz_i,\;\;\ell''_i=qz_i
$$
for some unique $z_i, p,q$ with  $p,q>0$ and $\gcd(p,q)=1$. Thus
\begin{align}
\sum_{\substack{A',B',C' \lesssim N^2 \\ |A' + B' \alpha + C' \beta-\tau'| < 2^{j} \\ A' B'\neq 0 \\ A'B'=(C')^2 }}
\sum_{\substack{ \sum_{i=1}^3 (k''_i)^2 = A',\\\sum_{i=1}^3 (\ell''_i)^2 = B',\\  \sum_{i=1}^3 k''_i \ell''_i = C' }} 1
&\leq
\sum_{\substack{p,q,m\in\N \\ \gcd(p,q) = 1 \\ p, q \lesssim N/\sqrt{m}
	 \\ |(p^2+ q^2 \alpha + pq \beta)m-\tau'| < 2^{j} }} \sum_{\substack{ z_i \\ z_1^2+z_2^2+z_3^2 = m}} 1
 \nonumber
 \\
 &\preceq
 \sum_{\substack{p,q,m\in\N\\ \gcd(p,q) = 1\\ mp^2, mq^2 \lesssim N^2
 		\\ |(p^2+ q^2 \alpha + pq \beta)m-\tau'| < 2^{j} }} \sqrt{m}.
 	\label{goshawk}
\end{align}
To finish this case, it is enough to involve the genericity of $\alpha,\beta$ as follows:
\begin{lem}\label{8-variables-singular-case}
	For almost all $\alpha,\beta$ and all $m,m',p,p',q,q'$ satisying $(mp^2,mq^2)\neq (m'(p')^2,m'(q')^2)$ and $mp^2,mq^2,m'(p')^2,m'(q')^2\leq N^2$ we have
	$$
	\lvert 
	m(p^2+q^2\alpha + pq\beta)
	-m'((p')^2+(q')^2\alpha + p'q'\beta )
	\rvert
	\succeq_{\alpha,\beta} N^{-2}.
	$$
\end{lem}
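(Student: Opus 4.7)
The plan is to apply the Borel--Cantelli lemma across dyadic scales of $N$. Fix $\epsilon>0$ and, for each dyadic $N$, let $B_{N,\epsilon}$ be the subset of the compact parameter region on which the claimed inequality fails at this $N$ with constant smaller than $N^{-2-\epsilon}$. The conclusion will follow if $\sum_N |B_{N,\epsilon}|<\infty$. Since we may absorb common factors to assume $\gcd(p,q)=\gcd(p',q')=1$, a union bound combined with the first inequality in \eqref{elem} reduces matters to showing
\[
\sum_{\text{pairs}}\frac{1}{M}\;\lesssim\;N^2 (\log N)^{3},\qquad
M:=\max\bigl(|mp^2-m'(p')^2|,\,|mq^2-m'(q')^2|,\,|mpq-m'p'q'|\bigr),
\]
after which $|B_{N,\epsilon}|\lesssim N^{-\epsilon}(\log N)^3$ is summable in $N$.

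I would split the sum according to whether the two primitive direction vectors agree. In the \emph{collinear case} $(p,q)=(p',q')$ one has $M=|m-m'|\max(p^2,q^2)$, and summing first over $m\neq m'$ and then over primitive $(p,q)$ with $\max(p,q)\leq N$ contributes at most $O(N^2\log N)$. In the \emph{non-collinear case} $(p,q)\neq (p',q')$ the triple $(mp^2-m'(p')^2,\,mq^2-m'(q')^2,\,mpq-m'p'q')$ is a nonzero element of the rank-two lattice $\Lambda\subset\Z^3$ spanned by the linearly independent vectors $v=(p^2,q^2,pq)$ and $v'=((p')^2,(q')^2,p'q')$ (independence following from primitivity and $(p,q)\neq(p',q')$). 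Minkowski's second theorem, applied in the $\ell^\infty$ norm (which is comparable to the Euclidean norm on any two-plane up to dimensional constants), gives
\[\sum_{m,m'}\frac{1}{\lVert mv-m'v'\rVert_\infty}\;\lesssim\;\frac{N^2}{\operatorname{covol}\Lambda},\]
while a direct computation $v\times v'=(pq'-p'q)(-qq',\,-pp',\,pq'+qp')$ yields $\operatorname{covol}\Lambda\gtrsim|\Delta|(pp'+qq')$ with $\Delta=pq'-p'q$.

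To finish, I sum over quadruples $(p,q,p',q')$ of primitive pairs via the change of variables $(p',q')\mapsto(k,l)=(pq'-p'q,\,pp'+qq')$, whose Jacobian is $p^2+q^2$, so the image is a sublattice of $\Z^2$ of index $p^2+q^2$. A dyadic decomposition in $(k,l)$ then bounds $\sum_{u'}1/(|\Delta|(pp'+qq'))$ by $(\log N)^2/(p^2+q^2)$ for each fixed primitive $u=(p,q)$, and a final summation using $\sum_{1\leq p,q\leq N}1/(p^2+q^2)\lesssim\log N$ produces a non-collinear contribution of order $N^2(\log N)^3$, as required. The main technical obstacle is the non-collinear case: identifying the lattice $\Lambda$, computing its covolume via the cross product, and controlling the divisor-type sum $\sum 1/(|\Delta|(pp'+qq'))$ through the above change of variables; the collinear case and the Borel--Cantelli wrap-up are routine.
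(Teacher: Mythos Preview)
Your argument is correct and takes a genuinely different route from the paper. The paper works with the weaker bound $|E|\lesssim N^{-2-\tau}\min\bigl(|mq^2-m'(q')^2|^{-1},|mp^2-m'(p')^2|^{-1}\bigr)$, organises the sum by the dyadic size $d$ of the larger of these two differences, and for each $d$ directly counts six-tuples with $|mp^2-m'(p')^2|\le d$ via an elementary estimate on the number of $(p,p')$ with $|mp^2-m'(p')^2|\le d$ for fixed $(m,m',q,q')$. Your approach instead fixes the primitive directions $(p,q),(p',q')$, treats the sum over $(m,m')$ through the rank-two lattice $\Lambda=\Z v+\Z v'\subset\Z^3$, and then sums over directions via the orthogonal change of variables $(p',q')\mapsto(\Delta,l)$. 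The paper's method is more hands-on; yours is more structural and makes the role of the covolume $|\Delta|\sqrt{(p^2+q^2)((p')^2+(q')^2)}$ transparent.

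One point deserves a word of justification. Your displayed inequality $\sum_{m,m'}1/\lVert mv-m'v'\rVert_\infty\lesssim N^2/\operatorname{covol}\Lambda$ is not a direct consequence of Minkowski's second theorem: for a rank-two lattice with successive minima $\lambda_1\le\lambda_2$ one has
\[
\sum_{0\neq w\in\Lambda,\ \lVert w\rVert\le R}\frac{1}{\lVert w\rVert}\ \lesssim\ \frac{R}{\lambda_1\lambda_2}+\frac{\log(\lambda_2/\lambda_1)}{\lambda_1},
\]
and the second term could in principle dominate. Here it does not, because $\lambda_2\le\max(\lVert v\rVert_\infty,\lVert v'\rVert_\infty)\le N^2$, whence $\lambda_1\gtrsim\operatorname{covol}\Lambda/N^2$ and the short-vector term is at most $\log N$ times the main term. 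With this remark your lattice step goes through (with one harmless extra $\log N$). The final estimate $\sum_{u'}1/(|\Delta|(pp'+qq'))\lesssim(\log N)^2/(p^2+q^2)$ is also correct; if you prefer an explicit check rather than a dyadic count on the rotated square sublattice, the identity $p\,l-q\,\Delta=p'(p^2+q^2)$ gives a clean partial-fractions proof.
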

\begin{proof} Fix $\tau>0$. Denoting $E_{m,p,q,m',p',q'} = \{ (\alpha,\beta) \; \mbox{such that} \; |m(p^2 + \alpha q^2 + pq \beta) - m'((p')^2 + \alpha (q')^2 + \beta p' q' )| < N^{-2-\tau} \}$, we get, by applying (\ref{elem}), that
$$
|E_{m,p,q,m',p',q'}| \leq N^{-2-\tau} \min \left( \frac{1}{mq^2-m'(q')^2} , \frac{1}{mp^2 - m' (p')^2} \right).
$$ Now we will calculate
\[\sum_{m,p,q,m',p',q'} |E_{m,p,q,m',p',q'}|\sim N^{-2-\tau}\sum_{1\leq d\lesssim N^2}\frac{1}{d}\sum_{\substack{(m,p,q,m',p',q'):\\|mq^2-m'(q')^2|=d\\|mp^2 - m' (p')^2|\leq d}}1.
\] For fixed $C$, when $(m,m',q,q')$ is also fixed, we have $1\leq p\lesssim N/\sqrt{m}$, and for each $p$, we have
\[\frac{\sqrt{\max(mp^2-d,0)}}{\sqrt{m'}}\leq p'\leq\frac{\sqrt{mp^2+d}}{\sqrt{m'}},
\] so the number of choices for $p'$ is at most
\[1+\frac{\sqrt{mp^2+d}-\sqrt{\max(mp^2-d,0)}}{\sqrt{m'}}\sim 1+\frac{d}{\sqrt{m'(mp^2+d)}}.
\] Therefore, for fixed $(m,m',q,q')$ we have
\[\sum_{(p,p'):|mp^2 - m' (p')^2|\leq d}1\lesssim\sum_{p\lesssim N/\sqrt{m}}\bigg(1+\frac{d}{\sqrt{m'(mp^2+d)}}\bigg)\preceq\frac{N}{\sqrt{m}}+\frac{d}{\sqrt{mm'}}.
\] Now by definition of $d$, we have 
\[\sum_{1\leq d\lesssim N^2}\frac{1}{d}\sum_{\substack{(m,m',q,q'):\\|mq^2-m'(q')^2|=d}}\frac{d}{\sqrt{mm'}}\lesssim \sum_{\substack{1\leq m,m'\lesssim N^2\\1\leq q\lesssim N/\sqrt{m},1\leq q'\lesssim N/\sqrt{m'}}}\frac{1}{\sqrt{mm'}}\lesssim \sum_{1\leq m,m'\lesssim N^2}\frac{1}{\sqrt{mm'}}\frac{N^2}{\sqrt{mm'}}\preceq N^2.
\] Moreover, when $m$ is fixed there are at most $N/\sqrt{m}$ choices for $q$, and when $(m,q)$ is fixed, there are most $O(N^{\varepsilon})$ choices for $(m',q')$ such that $|mq^2-m'(q')^2|=d$ due to divisor estimates, so
\[\sum_{1\leq d\lesssim N^2}\frac{1}{d}\sum_{\substack{(m,m',q,q'):\\|mq^2-m'(q')^2|=d}}\frac{N}{\sqrt{m}}\preceq \sum_{1\leq d\lesssim N^2}\frac{1}{d}\sum_{1\leq m\lesssim N^2}\frac{N}{\sqrt{m}}\frac{N}{\sqrt{m}}\preceq N^2.
\] This implies that
\[\sum_{m,p,q,m',p',q'} |E_{m,p,q,m',p',q'}|<\infty,
\]so the lemma of Borel-Cantelli then gives the result. {Namely, almost all $(\alpha,\beta)$ belongs to only finitely many $E_{m,p,q,m',p',q'}$, so the desired inequality holds with some constant depending on $(\alpha,\beta)$ by treating the finitely many sets individually.
} 
\end{proof}

By \eqref{goshawk} and the Lemma we get
\begin{equation}
\label{flamingo}
\sum_{\substack{A',B',C' \lesssim N^2 \\ |A' + B' \alpha + C' \beta-\tau'| < 2^{j} \\ A' B'\neq 0 \\ A'B'=(C')^2 }}
\sum_{\substack{ \sum_{i=1}^3 (k''_i)^2 = A',\\\sum_{i=1}^3 (\ell''_i)^2 = B',\\  \sum_{i=1}^3 k''_i \ell''_i = C' }} 1
\preceq
(2^jN^2+1)N
\end{equation}
which is enough for us.

\subsection{The case $A'B'\neq(C')^2$, main argument}

Recall the bound from Corollary~\ref{twite} above. Setting $h=h(A',B',C')$, $U = h^{-2} A'$, $V=h^{-2} B'$, and $W = h^{-2} C'$ we deduce from it that
\begin{equation}
\sum_{\substack{A',B',C' \lesssim N^2 \\ |A' + B' \alpha + C' \beta-\tau'| < 2^{j} \\ A'B'\neq(C')^2 }}
\sum_{\substack{ \sum_{i=1}^3 (k''_i)^2 = A',\\\sum_{i=1}^3 (\ell''_i)^2 = B',\\  \sum_{i=1}^3 k''_i \ell''_i = C' }} 1
\preceq
\sum_{h\lesssim N} 
\sum_{\substack{U,V,W \lesssim N^2/h^2 \\ |U + V \alpha + W \beta-\tau'/h^2| < 2^{j}/h^2  \\  UV-W^2 > 0 }} h.
\end{equation}
Observe that since $|U + V \alpha + W \beta|\succeq_{\alpha,\beta} h^4/N^4$ for all $U,V,W \lesssim N^2/h^2 $ not all zero (by {integrating in $\alpha$ and $\beta$ exploiting genericity, in the same way as in the proof of Theorem \ref{14case})}, the sum over those $h\leq N^{1/2}$ satisfies
\begin{equation*}
\sum_{h\leq N^{1/2}}
\sum_{\substack{U,V,W \lesssim N^2/h^2 \\ |U + V \alpha + W \beta-\tau'/h^2| < 2^{j}/h^2 \\ UV-W^2 > 0  }} h
\preceq
\sum_{h\lesssim N^{1/2}} (2^jN^4/h^6+1)h
\lesssim 2^j N^4+N.
\end{equation*}
This is satisfactory, and it remains to treat $ N^{1/2}\leq h \lesssim N$. Our proof does not use the full strength  of the condition $UV-W^2 > 0$ but only the weaker $(U,V,W)\neq (0,0,0)$; this should heuristically make no difference in the size of the sum and we do not know any way to take advantage of the full condition $UV-W^2 > 0$. The result which is required is then the following.
\begin{lem}\label{gazunk}
Let
 $$
 \Phi(\alpha,\beta, \tau') =
 \sum_{h\sim K} 
\sum_{\substack{U,V,W \lesssim N^2/h^2 \\ |U + V \alpha + W \beta-\tau'/h^2| < 2^{j}/h^2 \\  (U,V,W)\neq (0,0,0) }} 1
 $$
 For almost all $\alpha,\beta$ and any $\tau'\in\R$, $j\in \Z$, $K\in [N^{1/2},N]$ we have 
$$
\Phi(\alpha,\beta, \tau') \preceq_{\alpha,\beta} 2^j N^4/K+N/K.
$$
\end{lem}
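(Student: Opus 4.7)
The plan is to apply a differencing argument to strip the $\tau'$-dependence from $\Phi$, bound the resulting homogeneous count in expectation over $(\alpha,\beta)$ via the elementary measure bound \eqref{elem}, and conclude by Markov's inequality together with Borel--Cantelli.

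\emph{Differencing.} Write $S_h(\tau') := \{(U,V,W)\in\Z^3 : |V|,|W|\lesssim N^2/h^2,\, |U+\alpha V+\beta W-\tau'/h^2|<2^j/h^2\}$. If $S_h(\tau')\neq\emptyset$, pick any base point and observe that every element of $S_h(\tau')$ differs from it by some $\Delta\in\Z^3$ with $|\Delta_V|,|\Delta_W|\lesssim N^2/h^2$ and $|\Delta_U+\alpha\Delta_V+\beta\Delta_W|<2^{j+1}/h^2$. Hence $|S_h(\tau')|\leq 1+M_0(h)$, where $M_0(h)$ counts the nonzero such $\Delta$, and
$$
\Phi(\alpha,\beta,\tau')\leq K+\Psi(\alpha,\beta),\qquad \Psi(\alpha,\beta):=\sum_{h\sim K}M_0(h).
$$
The additive $K$ is absorbed by the target $2^jN^4/K$ in the relevant regime; in the extreme regime where it is not, one controls $\#\{h \sim K : S_h(\tau') \neq \emptyset\}$ separately by a divisor-type count of pairs $(h,U)$ with $|Uh^2-\tau'|<2^j$.

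\emph{First-moment estimate.} Split $M_0(h)$ according to whether $(\Delta_V,\Delta_W)=0$. The $(\Delta_V,\Delta_W)=0$ part is $(\alpha,\beta)$-independent and bounded by $\lesssim 2^j/h^2$ (indeed zero when $2^j<h^2/2$), which sums over $h$ to $\lesssim 2^j/K\leq 2^jN^4/K$. For $(\Delta_V,\Delta_W)\neq 0$, the first line of \eqref{elem} gives, for each nonzero $\Delta$,
$$
\mathrm{meas}\{(\alpha,\beta)\in[1,2]^2 : |\Delta_U+\alpha\Delta_V+\beta\Delta_W|<2^{j+1}/h^2\}\lesssim \frac{2^j/h^2}{\max(|\Delta_U|,|\Delta_V|,|\Delta_W|)}.
$$
Summing over $\Delta\in\Z^3\setminus 0$ with $|\Delta|_\infty\lesssim N^2/h^2$, using $\sum_{0<|\Delta|_\infty\lesssim X}\max(|\Delta_U|,|\Delta_V|,|\Delta_W|)^{-1}\lesssim X^2$ in $\Z^3$ with $X=N^2/h^2$, we obtain $\mathbb{E}_{(\alpha,\beta)}[M_0(h)]\preceq 2^jN^4/h^6$, and therefore $\mathbb{E}[\Psi]\preceq 2^jN^4/K^5$.

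\emph{Conclusion via Markov and Borel--Cantelli.} Markov's inequality yields $\mathrm{Prob}[\Psi>2^jN^4/K]\lesssim K^{-4}\leq N^{-2}$ for $K\geq N^{1/2}$. A union bound over the $O((\log N)^2)$ dyadic scales of $K\in[N^{1/2},N]$ and of $2^j$ gives total exceptional measure $\preceq N^{-2+\epsilon}$ at scale $N$, which is summable over dyadic $N$. Borel--Cantelli then gives $\Psi(\alpha,\beta)\preceq_{\alpha,\beta}2^jN^4/K$ for almost every $(\alpha,\beta)$, uniformly in $(K,j)$; combined with the differencing step this yields the lemma. The main delicate point is uniformity in the continuous parameter $\tau'$: the differencing step is precisely designed to bypass this by producing a $\tau'$-independent majorant $\Psi$, avoiding the wasteful alternative of discretising $\tau'$ and union-bounding over it.
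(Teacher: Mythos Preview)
Your first-moment bound on the homogeneous piece $\Psi$ is correct and clean: the within-$h$ differencing kills $\tau'$, the measure estimate \eqref{elem} gives $\mathbb{E}[M_0(h)]\lesssim 2^jN^4/h^6$, and Markov plus Borel--Cantelli yields $\Psi\preceq_{\alpha,\beta}2^jN^4/K$. The problem is the additive term.

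Your inequality $\Phi\leq K+\Psi$ only proves $\Phi\preceq K+2^jN^4/K$, whereas the lemma demands $\Phi\preceq N/K+2^jN^4/K$. These disagree precisely when $K^2\gg 2^jN^4+N$, i.e.\ for $K>N^{1/2}$ and $2^j<K^2/N^4$; for instance at $K=N$, $2^j=N^{-3}$ your bound is $N$ while the target is $O(1)$. This range is not avoidable: the paper's reduction uses the lemma down to $2^j\sim N^{-3}$ and over the full range $K\in[N^{1/2},N]$, and the eventual $NT$ term in Theorem~\ref{handsaw} is sensitive to exactly this endpoint.

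Your suggested repair --- a divisor count of pairs $(h,U)$ with $|Uh^2-\tau'|<2^j$ --- does not control $\#\{h:S_h(\tau')\neq\emptyset\}$. Non-emptiness of $S_h(\tau')$ only says $|h^2U+h^2(\alpha V+\beta W)-\tau'|<2^j$ for some $(U,V,W)\neq 0$, and since $|h^2V|,|h^2W|\lesssim N^2$ the term $h^2(\alpha V+\beta W)$ ranges over an interval of length $\sim N^2$. Thus $Uh^2$ is only pinned to $|Uh^2-\tau'|\lesssim N^2$, which carries no divisor structure and gives back at best $\sim N^2/K$ pairs, still far too many.

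The paper addresses exactly this difficulty. Before the proof it explains that squaring (cross-$h$ differencing) is also insufficient, and then it \emph{cubes} $\Phi$: with three indices $h_0,h_1,h_2$ one can eliminate $\tau'$ and still retain \emph{two} linear constraints, so the sharper second line of \eqref{elem} applies. The collinear configurations (the analogue of your ``one point per $h$'' diagonal) are then isolated and handled by Lemma~\ref{zapdos}, while the generic term is Lemma~\ref{cinnamologus}. Your one-constraint differencing cannot access that extra saving, and that is why the additive $K$ survives.
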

Combining this lemma with the previous two bounds will show that for generic $\alpha,\beta$ we have
\begin{equation}
	\label{buzzard}
	\sum_{\substack{A',B',C' \lesssim N^2 \\ |A' + B' \alpha + C' \beta-\tau'| < 2^{j} \\ A'B'\neq(C')^2 }}
	\sum_{\substack{ \sum_{i=1}^3 (k''_i)^2 = A',\\\sum_{i=1}^3 (\ell''_i)^2 = B',\\  \sum_{i=1}^3 k''_i \ell''_i = C' }} 1
	\preceq_{\alpha,\beta}
	2^j N^4+N.
\end{equation}
To motivate the proof of the lemma, we sketch some attacks on the problem based on the proof of \eqref{flamingo}. Imitating Lemma~\ref{8-variables-singular-case}, one might try to show that for generic $(\alpha, \beta)$, all $h,h'\sim K$, $U, U',V,V',W,W' \lesssim N^2/h^2 $, and some appropriate $\rho$ we have
\begin{equation}\label{boobrie}
\left\lvert h^2(U + V \alpha + W \beta)-h^{\prime 2} (U'+V'\alpha+W'\beta)\right\rvert
\gtrsim_{\alpha,\beta} \rho
\end{equation}
whenever the left-hand side is nonzero. It would follow that the number of values of $h^2(U + V \alpha + W \beta)$ such that $\left\lvert h^2(U + V \alpha + W \beta)-\tau'\right\rvert< 2^j$ is at most $\preceq_{\alpha,\beta}2^j/\rho$. Taking an optimistic view, we might get a bound $\preceq_{\alpha,\beta}2^j K/\rho$ for the sum in Lemma~\ref{gazunk}.

What value of $\rho$ might be possible? The left-hand side of \eqref{boobrie} is of the form $G+H\alpha+J\beta$ for integers $G,H,J\lesssim N^{2}$. It is elementary that if $K\sim N^{1/2}$, any triple of integers $G,H,J\lesssim_\epsilon N^{2-\epsilon}$ will occur for some $h,h',U,U'$, etc. So if $K\sim N^{1/2}$ then there are $h,h',U,U',$ etc.\@ with
$$
\left\lvert h^2(U + V \alpha + W \beta)-h^{\prime 2} (U'+V'\alpha+W'\beta)\right\rvert \lesssim_\epsilon N^{-4+\epsilon}.
$$
Thus $\rho\lesssim_\epsilon N^{-4+\epsilon}$ in this case. The optimistic bound $2^j K/\rho$ for the sum in Lemma~\ref{gazunk} is then of size at least $\gtrsim_\epsilon 2^j K N^{4-\epsilon}$. But this is bigger than the bound in (\ref{buzzard}) by a factor of $KN^{-\epsilon}$.

We can try to refine the argument: rather than prove $\left\lvert h^2(U + V \alpha + W \beta)-h^{\prime 2} (U'+V'\alpha+W'\beta)\right\rvert < \rho$ has no solutions, we can seek a bound for the number of solutions. More precisely, let
\begin{equation*}\label{eblis}
\Sigma(\alpha,\beta,\tau')
=
\sum_{h,h'\sim K} 
\sum_{\substack{|U + V \alpha + W \beta-\tau'/h^2| < 2^{j}/h^2  \\ |U' + V' \alpha + W' \beta-\tau'/h^{\prime 2}| < 2^{j}/h^{\prime 2} \\
		 h^2(U,V,W)\neq h^{\prime 2}(U',V', W')
}} 1
\end{equation*}
and write
\begin{equation}
	\bigg( \sum_{h\sim K} 
	\sum_{\substack{U,V,W \lesssim N^2/h^2 \\ |U + V \alpha + W \beta-\tau'/h^2| < 2^{j}/h^2  }} h\bigg)^2
	\lesssim K^2\Sigma(\alpha,\beta,\tau')+
	\sum_{h,h'\sim K} 
	\sum_{\substack{|U + V \alpha + W \beta-\tau'/h^2| < 2^{j}/h^2  \\ |U' + V' \alpha + W' \beta-\tau'/h^{\prime 2}| < 2^{j}/h^{\prime 2} 
			\\
			 h^2(U,V,W)= h^{\prime 2}(U',V', W')
		}} hh'.
	\label{whipoorwill}
\end{equation}
The second term captures the diagonal contribution from $h^2(U,V,W) = h^{\prime 2}(U',V', W')$, which would otherwise cause problems later; we might hope that 
 this term is negligible. We would then need to bound $\Sigma(\alpha,\beta,\tau')$ for almost all $\alpha,\beta$, uniformly in $\tau'$. We can eliminate $\tau'$ using the same idea as Lemma~\ref{8-variables-singular-case}: we observe that $\Sigma(\alpha,\beta,\tau')\leq\Sigma'(\alpha,\beta) $, where we set
$$
\Sigma'(\alpha,\beta)
=
\sum_{h,h'\sim K} 
\sum_{\substack{\left\lvert h^2(U + V \alpha + W \beta)-h^{\prime 2} (U'+V'\alpha+W'\beta)\right\rvert < 2^j \\
		  h^2(U,V,W)\neq h^{\prime 2}(U',V', W')
 }} 1.
$$
Using the Borel-Cantelli lemma we find that for generic $\alpha,\beta$ we have
\begin{multline*}
\Sigma'(\alpha,\beta)
\preceq_{\alpha,\beta}
\int
\Sigma'(\alpha',\beta') 
\, d\alpha' d\beta' =
\sum_{h,h'\sim K} \sum_{
h^2(U,V,W)\neq h^{\prime 2}(U',V', W')  }
\\
\operatorname{measure}\big\{ (\alpha', \beta') : \left\lvert h^2(U + V \alpha' + W \beta')-h^{\prime 2} (U'+V'\alpha'+W'\beta')\right\rvert < 2^j
\big\}.
\end{multline*}
The condition $
h^2(U,V,W)\neq h^{\prime 2}(U',V', W')$ removes the terms for which the measure above is largest, which could otherwise have dominated the sum. But even for typical $h,h',U,U',\dotsc$ this measure will have size $\gtrsim 2^j/N^2$. Consquently the best upper bound for $\Sigma'(\alpha,\beta)$ which we could hope to prove is
$$
\Sigma'(\alpha,\beta)
\preceq_{\alpha,\beta}
2^j N^{12}/K^{10}.
$$
Via \eqref{whipoorwill} this would lead to a bound of at best $2^{j/2} N^6/K^4$ for the sum in Lemma~\ref{gazunk}, which is not sufficient. For example when $K=N^{1/2}$ and $2^j=N^{-3}$ that bound is at least $N^{5/2}$, while (\ref{buzzard}) requires a bound of $\lesssim_{\alpha, \epsilon}N^{1+\epsilon}$.

The proof we give of Lemma~\ref{gazunk} is nonetheless very close to the sketch above. Instead of the square in \eqref{whipoorwill} we will take the cube of the sum we hope to bound. We will split off a kind of ``diagonal contribution", and the remaining term will be estimated by eliminating $\tau'$ and applying the Borel-Cantelli lemma.

\begin{proof}[Proof of Lemma~\ref{gazunk}]
We may assume $2^j\geq N^{-3}$, since otherwise the term $N/K$ on the right hand side dominates so the case can be treated as if $2^j\sim N^{-3}$. We take the cube:
	\begin{equation*}
	 \Phi(\alpha,\beta, \tau')^3
	=
	\sum_{h_0,h_1,h_2\sim K} 
	\sum_{\substack{U_i,V_i,W_i \lesssim N^2/K^2\;\;(i=0,1,2) \\ |U_i + V_i\alpha + W_i \beta-\tau'/h_i^2| < 2^{j}/h_i^2  \;\;(i=0,1,2)\\ (U_i,V_i,W_i)\neq (0,0,0)   \;\;(i=0,1,2)}} 1.
	\end{equation*}
	We split the terms according to whether the $h_i^2(U_i,V_i,W_i)$ lie on a line through the origin, or else on a line which is not through the origin, or neither. Write $\times$ for the vector product of two column vectors. Observe that the $h_i^2(U_i,V_i,W_i)$ are collinear iff the quantity
	\begin{equation}
		\label{billdad}
		\Delta = \left(h_2^2(U_2,V_2,W_2)-h_0^2(U_0,V_0,W_0)\right)^T\times \left(h_1^2(U_1,V_1,W_1)-h_0^2(U_0,V_0,W_0)\right)^T
	\end{equation}
	vanishes.
	So the promised splitting of the sum is
	\begin{align}
	\label{marshharrier}
 \Phi(\alpha,\beta, \tau')^3\lesssim{}&
	\sum_{h_0,h_1,h_2\sim K} 
	\sum_{\substack{U_i,V_i,W_i \lesssim N^2/K^2\;\;(i=0,1,2) \\ \left\lvert h_i^2(U_i + V_i\alpha + W_i \beta)-\tau'\right\rvert < 2^{j}  \;\;(i=0,1,2) \\
			h_i^2(U_i,V_i,W_i) =  \lambda_i(U_0,V_0, W_0) \;\; (\lambda_i \in \Q) \\ (U_0,V_0,W_0)\neq (0,0,0)  }}
	1
	\\
	&+
	\sum_{h_0,h_1,h_2\sim K} 
	\sum_{\substack{U_i,V_i,W_i \lesssim N^2/K^2\;\;(i=0,1,2) \\ \left\lvert h_i^2(U_i + V_i\alpha + W_i \beta)-\tau'\right\rvert < 2^{j}  \;\;(i=0,1,2) \\
			(U_0,V_0,W_0)^T\times  (U_1,V_1, W_1)^T \neq 0 \\ \Delta = 0 }}
	\nonumber
 1
 	\\
&+
 \sum_{h_0,h_1,h_2\sim K} 
 \sum_{\substack{U_i,V_i,W_i \lesssim N^2/K^2\;\;(i=0,1,2)  \\ \left\lvert h_i^2(U_i + V_i\alpha + W_i \beta)-\tau'\right\rvert < 2^{j}  \;\;(i=0,1,2) \\
 		 \Delta \neq 0 }}
 1.
 \nonumber
	\end{align}

	The first term on the right-hand side includes the diagonal contribution, when the $h_i^2(U_i,V_i,W_i)$ are all equal. Letting $(\hat{U}_0,\hat{V}_0,\hat{W}_0)=(U_0,V_0,W_0)/\gcd(U_0,V_0,W_0)$, this first term is bounded by
	\begin{multline*}
	\sum_{h_0 \sim K} 
	\sum_{\substack{U_0,V_0,W_0 \lesssim N^2/K^2 \\ \left\lvert h_0^2(U_0 + V_0\alpha + W_0 \beta)-\tau'\right\rvert < 2^{j}  \\ (U_0,V_0,W_0)\neq (0,0,0) 
	 }}
 \left(
	\sum_{\substack{ \lvert y(\hat{U}_0+\hat{V}_0\alpha +\hat{W}_0\beta)-\tau'| < 2^{j}  \;\;(y\in \Z) }}
	1
	\right)^2
	\\
	\leq
	\sum_{h_0 \sim K} 
	\sum_{\substack{U_0,V_0,W_0 \lesssim N^2/K^2 \\ \left\lvert h_0^2(U_0 + V_0\alpha + W_0 \beta)-\tau'\right\rvert < 2^{j}  \\ (U_0,V_0,W_0)\neq (0,0,0) 
	}}
	\left(
	\frac{2^{j+1}}{ \lvert \hat{U}_0+\hat{V}_0\alpha +\hat{W}_0\beta\rvert }	
	\right)^2 + 1.
	\end{multline*}
	For generic $\alpha,\beta$ we have $\lvert \hat{U}_0+\hat{V}_0\alpha +\hat{W}_0\beta\rvert\succeq_{\alpha,\beta}K^4/N^4$ and so the sum on the last line is
	\begin{equation}\label{turul}
	\preceq_{\alpha,\beta}
	\left( \frac{ 2^{2j} N^8}{K^8} + 1 \right)
	\Phi(\alpha,\beta, \tau').
	\end{equation}
	This will suffice for the first term. A variation of the same argument produces the following lemma, which is enough to treat the second term in \eqref{marshharrier}.
\begin{lem}\label{zapdos}
	For generic $\alpha,\beta$, $2^j\geq N^{-3}$ and $\tau'\in\R$, $K\in[N^{1/2},N]$ and $m,c\in \Z^3$ with $\lvert m \rvert, \lvert c\rvert \lesssim N^2$, $\gcd(m_1,m_2,m_3)=1$ and  $m^T\times c^T\neq 0$, we have
	$$
	\sum_{h_2\sim K} 
	\sum_{\substack{U_2,V_2,W_2 \lesssim N^2/K^2  \\ \left\lvert h_2^{ 2}(U_2 + V_2 \alpha + W_2 \beta)-\tau'\right\rvert < 2^j\\ 
			h_2^{ 2}(U_2 ,V_2 , W_2 ) = mx+c \;\;(x\in\Z) }}
	1
	\preceq_{\alpha,\beta}
	 \frac{2^j  N^4}{K^2} + 1.
	$$
\end{lem}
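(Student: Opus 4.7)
The plan is to exploit the constraint $h_2^2(U_2,V_2,W_2) = mx+c$ to reduce the sum to one over pairs $(h_2, x)$. Setting $\mu = m_1+m_2\alpha+m_3\beta$ and $\nu = c_1+c_2\alpha+c_3\beta$, the inequality $|h_2^{2}(U_2+V_2\alpha+W_2\beta)-\tau'|<2^j$ becomes $|x\mu+\nu-\tau'|<2^j$, which restricts $x$ to an interval of length $\lesssim 2^j/|\mu|$. For each fixed $h_2\sim K$, I would then determine which $x\in\Z$ produce $(mx+c)/h_2^2\in\Z^3$: since $\gcd(m_1,m_2,m_3)=1$, one can pick $a_i\in\Z$ with $\sum_i a_im_i=1$, and then the three congruences $m_i x \equiv -c_i \pmod{h_2^2}$ force $x \equiv -\sum_i a_ic_i\pmod{h_2^2}$, placing $x$ in a single residue class modulo $h_2^2$. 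A necessary compatibility condition is that $h_2^2 \mid m_ic_j-m_jc_i$ for all $i,j$, equivalently $h_2^2 \mid g_0 := \gcd\bigl((m\times c)_1,(m\times c)_2,(m\times c)_3\bigr)$.

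The crucial observation is that $m\times c\neq 0$ together with $|m|,|c|\lesssim N^2$ forces $1\leq g_0\lesssim N^4$, so by the divisor bound the number of admissible $h_2\sim K$ is $\preceq 1$. For each such $h_2$, the count of valid $x$ in the prescribed residue class intersected with an interval of length $\lesssim 2^j/|\mu|$ is $\lesssim 2^j/(|\mu|h_2^2)+1 \lesssim 2^j/(|\mu|K^2)+1$. It remains to bound $|\mu|$ from below generically. A standard Borel-Cantelli/Khinchin-Groshev argument, dyadically estimating the measure of $(\alpha,\beta)\in[-2,2]^2$ with $|m_1+m_2\alpha+m_3\beta|<\delta$ for primitive $m\in\Z^3$ of size $\sim M$ (which totals $\lesssim \delta M^2$, summable in $M$ for $\delta=M^{-2-2\epsilon}$), gives for generic $(\alpha,\beta)$ the bound $|\mu|\succeq_{\alpha,\beta} 1/|m|^2\succeq 1/N^4$ uniformly in primitive $m$ with $|m|\lesssim N^2$. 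Multiplying everything together yields the claimed $\preceq 2^j N^4/K^2 + 1$.

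The main technical obstacle I foresee is ensuring that the Borel-Cantelli exceptional set of $(\alpha,\beta)$ is a single null set independent of $m,c,\tau',j,K$ and of $N$: this requires reducing degenerate cases (where one or two components of $m$ vanish) to one-dimensional Khinchin-type statements, and taking a countable union over the dyadic scales of $N$. All other parameters are automatically uniform once $|\mu|$ is controlled for every primitive $m$, and the hypothesis $2^j\geq N^{-3}$ is not actually used in the core estimate (it should only be needed to reduce the general case of the lemma to this regime).
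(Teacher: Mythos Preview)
Your proposal is correct and follows essentially the same route as the paper: both arguments show that the congruence $mx+c\equiv 0\pmod{h_2^2}$ forces $h_2^2\mid\gcd\bigl((m\times c)_1,(m\times c)_2,(m\times c)_3\bigr)$, so $h_2$ has $\preceq 1$ choices by the divisor bound, and then for each such $h_2$ the primitivity of $m$ places $x$ in a single residue class modulo $h_2^2$, giving $\lesssim 2^j/(|\mu|K^2)+1$ choices with the generic lower bound $|\mu|\succeq_{\alpha,\beta}|m|^{-2-\epsilon}$. Your derivation of the divisibility constraint via $m_j(m_ix+c_i)-m_i(m_jx+c_j)\equiv 0$ is a bit more direct than the paper's, which first records that $\gcd(x,h_2^2)\mid\gcd(c_1,c_2,c_3)$ and $\frac{h_2^2}{\gcd(x,h_2^2)}\mid\frac{m\times c}{\gcd(x,h_2^2)}$, but the substance is the same.
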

We also need a lemma to treat the third term in \eqref{marshharrier}; this will be proved in the next section using a Borel-Cantelli argument.
	\begin{lem}\label{cinnamologus}
		Define		
		\begin{equation*}
		\Sigma_0(\alpha,\beta)
		=	\sum_{h_0,h_1,h_2\sim K} 
		\sum_{\substack{U_i,V_i,W_i \lesssim N^2/K^2\;\;(i=0,1,2)  \\ \left\lvert h_i^2(U_i + V_i \alpha + W_i \beta)-h_0^2 (U_0+V_0\alpha+W_0\beta)\right\rvert < 2^j \;\;(i=1,2)\\  \Delta \neq 0 }}
		1.
		\end{equation*}
	For generic $\alpha,\beta$, $2^j\geq N^{-3}$ and $K\in[N^{1/2},N]$ we have $$
	\Sigma_0(\alpha,\beta)
		\preceq_{\alpha,\beta}
		1+\frac{2^{3j}N^{12}}{K^{3}}
		.
		$$
	\end{lem}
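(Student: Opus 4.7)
The plan is to follow the Borel--Cantelli strategy of Lemma~\ref{8-variables-singular-case}: bound the expectation $\int_{[1,2]^2}\Sigma_0(\alpha,\beta)\,d\alpha\,d\beta$ by (a constant multiple of) the target $1+2^{3j}N^{12}/K^3$, and then combine Chebyshev's inequality with a dyadic summation over $N$, over $K\in[N^{1/2},N]$ and over $2^j\in[N^{-3},N^2]$. Since only $O((\log N)^2)$ pairs $(K,2^j)$ are admissible for each $N$, any spare factor $N^{-\epsilon}$ in the Chebyshev estimate is sufficient for Borel--Cantelli to deliver the claim for almost every $(\alpha,\beta)$.

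For the expectation bound, I would apply the second inequality in \eqref{elem} to the two linear constraints in $(\alpha,\beta)$ which appear in the definition of $\Sigma_0$, namely
$$
\bigl|(h_0^2U_0-h_i^2U_i)+(h_0^2V_0-h_i^2V_i)\alpha+(h_0^2W_0-h_i^2W_i)\beta\bigr|<2^j\qquad(i=1,2).
$$
The three $2\times 2$ determinants appearing in \eqref{elem} are exactly the three coordinates of the cross-product vector $\Delta$ defined in \eqref{billdad}, so each configuration with $\Delta\neq 0$ contributes a measure $\lesssim 2^{2j}/|\Delta|_\infty$, and
$$
\int_{[1,2]^2}\Sigma_0\,d\alpha\,d\beta\;\lesssim\;2^{2j}\sum_{\substack{1\leq D\lesssim N^4\\D\text{ dyadic}}}\frac{N(D)}{D},
$$
where $N(D)$ counts admissible tuples $(h_i,U_i,V_i,W_i)_{i=0,1,2}$ with $|\Delta|_\infty\sim D$.

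To bound $N(D)$, I would switch to the integer vectors $\vec v_i:=h_i^2(U_i,V_i,W_i)\in h_i^2\Z^3$ with $|\vec v_i|\lesssim N^2$, for which $\Delta=(\vec v_2-\vec v_0)\times(\vec v_1-\vec v_0)$. Fixing $\vec v_0$ and $\vec v_1\neq \vec v_0$, the condition $|\Delta|_\infty\sim D$ confines $\vec v_2$ to a cylinder around the line $\vec v_0+\R(\vec v_1-\vec v_0)$ of transverse radius $\sim D/|\vec v_1-\vec v_0|$ and length $\lesssim N^2$. Counting the number of points of $h_2^2\Z^3$ in such a cylinder, summing over $\vec v_1$ using the shifted-lattice harmonic estimate $\sum_{\vec v\neq 0}|\vec v|^{-2}\preceq N^2/K^6$, then over $\vec v_0$, and finally over the $O(K^3)$ triples $(h_0,h_1,h_2)\in[K,2K]^3$, should yield a bound of the shape $N(D)\preceq N^{12}/K^9+D^2N^{10}/K^{15}$. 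Inserting this into the dyadic sum and using the hypotheses $K\geq N^{1/2}$ and $2^j\geq N^{-3}$ is exactly what is needed to make every term $\preceq 1+2^{3j}N^{12}/K^3$, completing the expectation bound.

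The main obstacle lies in the cylinder count. When $\vec v_1-\vec v_0$ happens to be parallel to a short vector of $h_2^2\Z^3$, the cylinder can contain an unusually long arithmetic progression along its axis, producing a naive axial contribution of size $\sim N^2/(K^2|(\vec v_1-\vec v_0)'|)$, where $(\,\cdot\,)'$ denotes the primitive part. If summed crudely this would yield a term of size $\sim N^{14}/K^{11}$, which is \emph{not} absorbed into $1+2^{3j}N^{12}/K^3$ throughout the allowed ranges. The key technical step is therefore to sum the axial contribution carefully using the arithmetic of primitive directions (in the spirit of the $\gcd(n_2,n_2')$ bookkeeping in Lemmas~\ref{merganser2} and~\ref{8-variables-singular-case}), so that it is subsumed into the main term $N^{12}/K^9$ rather than producing a spurious larger contribution.
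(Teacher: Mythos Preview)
Your overall strategy is exactly that of the paper: reduce by Borel--Cantelli to bounding $\int \Sigma_0\,d\alpha\,d\beta$, apply \eqref{elem} to obtain $2^{2j}/|\Delta|$ per configuration, then count configurations with $|\Delta|$ in dyadic ranges. You have also correctly located both the target bound $N(D)\preceq N^{12}/K^9 + D^2N^{10}/K^{15}$ and the obstacle, namely the axial contribution which, summed trivially, gives a spurious $N^{14}/K^{11}$ that is \emph{not} dominated by $2^{3j}N^{12}/K^3$ in the range $2^j\in[N^{-3},N^{-2}]$.

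The gap is that you have not executed the axial sum; you only assert that gcd bookkeeping will absorb it into $N^{12}/K^9$. This is true, but it is essentially the whole proof. The paper carries it out coordinate-wise rather than via the cylinder picture: after permuting so that $T:=|h_1^2U_1-h_0^2U_0|$ dominates all coordinates of $\vec v_i-\vec v_0$, it introduces dyadic parameters
\[
H=\gcd(h_1^2U_1-h_0^2U_0,\,h_1^2V_1-h_0^2V_0),\qquad G=\gcd(h_1^2,\,h_0^2U_0,\,h_0^2V_0),
\]
so that the direction of $\vec v_1-\vec v_0$ has ``primitive denominator'' $\sim T/H$. The count of $(U_2,V_2,W_2)$ with $|\Delta|\le M$ is then shown to be
\[
\lesssim 1+\frac{N^2H}{K^2T}+\frac{M}{K^2}+\frac{N^2HM}{K^4T}+\frac{M^2N^2}{T^2K^6},
\]
the second term being precisely your axial contribution. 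The parameter $G$ is needed when summing back over $V_1,W_1$: the congruence forced by $H$ is only modulo $H/G$, which costs a factor $(GT/(HK^2)+1)^2$, and one must also control how many $h_1$ have $\gcd(h_1^2,h_0^2U_0)\gtrsim G$. Summing over all four dyadic parameters $T,M,G,H$ and maximising yields $\int\Sigma_0\preceq (N^{12}/K^9+N^{14}/K^{15})\,2^{2j}$, which collapses to $N^{12}/K^9\cdot 2^{2j}$ since $K\ge N^{1/2}$. This is exactly the outcome you predicted, but the geometric cylinder language does not by itself make the cancellation visible; the explicit $G,H$ arithmetic is where the work lies.
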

	Inserting \eqref{turul} and the two lemmas into \eqref{marshharrier} gives
	\begin{equation*}
 \Phi(\alpha,\beta, \tau')^3
	\preceq
	\left( \frac{ 2^{2j} N^8}{K^8} + 1 \right)  \Phi(\alpha,\beta, \tau')+
	\left( \frac{ 2^{j} N^4}{K^2} + 1 \right)
	\Phi(\alpha,\beta, \tau')^2+1
	+\frac{2^{3j}N^{12}}{K^{3}},
	\end{equation*}
	from which we obtain
	$$
	\Phi(\alpha,\beta, \tau')
	\preceq_{\alpha,\beta}
	\frac{ 2^{j} N^4}{K^4}+
	\frac{ 2^{j} N^4}{K^2}+1
	+\frac{2^{j}N^{4}}{K}.
	$$
	This proves the lemma.
\end{proof}

\subsection{The case $A'B'\neq(C')^2$, auxiliary lemmas}
	
	\begin{proof}[Proof of Lemma~\ref{zapdos}]
		Suppose $mx+c \equiv 0 \pmod{ h_2^{ 2}}$. From this we draw two conclusions. First,
		$$
		\gcd(x,h_2^{ 2}) \mid \gcd(c_1,c_2,c_3).
		$$
		Second,
		$mx\times\left(\frac{c}{\gcd(x,h_2^{ 2})}\right) \equiv 0 \mod{h_2^{ 2}}$ 
	and so
	$$
	\frac{m\times c}{\gcd(x,h_2^{ 2})} \equiv 0 \mod{\frac{h_2^{ 2}}{\gcd(x,h_2^{ 2})}}.
	$$
	From these and the fact that $\lvert m \rvert, \lvert c\rvert \lesssim N^2$ and  $m^T\times c^T\neq 0$, we find that $\gcd(x,h_2^{ 2})$ and $h_2^{ 2}$ are  determined up to $\preceq 1$ possibilities by $m$ and $c$. Thus the sum in the lemma is 
	$$
	\preceq
	\max_{h_2\sim K} 
	\sum_{\substack{U_2,V_2,W_2 \lesssim N^2/K^2\;\;(i=0,1,2)  \\ \left\lvert h_2^{ 2}(U_2 + V_2 \alpha + W_2 \beta)-\tau'\right\rvert < 2^j\\ 
			h_2^{ 2}(U_2 ,V_2 , W_2 ) = mx+c \;\;(x\in\Z) }}
	1.
	$$
Let $x_0\in\mathbb{Z}$ be fixed such that $mx_0+c\equiv 0\pmod {h_2^2}$ (such $x_0$ exists, otherwise the sum will be zero). Let $x'=x-x_0$ and $c'=c+mx_0$, then we have $mx'\equiv 0\pmod{h_2^2}$. As $\gcd(m_1,m_2,m_3)=1$, we conclude that $x' \equiv 0\pmod{h_2^{ 2}}$. Then the expression above is bounded by
	$$
	\max_{h_2\sim K} 
	\sum_{\substack{
			x' \equiv 0\pmod{h_2^{ 2}}
			\\ 
			\left\lvert x'(m_1+m_2\alpha+m_3\beta) + (c'_1+c'_2\alpha+c'_3\beta) -\tau'\right\rvert < 2^{j}  }}
	1
	\lesssim
	\max_{h_2\sim K} \left( \frac{2^j}{\lvert h_2^{ 2} (m_1+m_2\alpha+m_3\beta) \rvert}+1 \right).
	$$
	Now for generic $\alpha, \beta$ this is
		\begin{equation*}
		\preceq_{\alpha,\beta} \frac{2^j \lvert m \rvert^{2+\epsilon}}{K^2}+1
		\end{equation*}
from which the claim follows.	
	\end{proof}
	
	\begin{proof}[Proof of Lemma~\ref{cinnamologus}]
		By letting $K$ and $N$ range over powers of 2, the Borel-Cantelli lemma implies that for almost all $\alpha,\beta$ we have
\begin{align*}
\Sigma_0(\alpha,\beta)
\preceq_{\alpha,\beta}{}&1+
\int
\Sigma_0(\alpha',\beta') 
\, d\alpha' d\beta'
\\
{}={}&1+
\sum_{h_0,h_1,h_2\sim K} 
\sum_{\substack{U_i,V_i,W_i \lesssim N^2/K^2\;\;(i=0,1,2)  \\  \Delta \neq 0 }}
\\
&\operatorname{measure}\big\{ (\alpha', \beta') :
\left\lvert h_i^2(U_i + V_i \alpha' + W_i \beta')-h_0^2 (U_0+V_0\alpha'+W_0\beta')\right\rvert < 2^j\;\; (i=1,2)
\big\}.
\end{align*}
In light of (\ref{elem}) and \eqref{billdad}, this implies
\begin{equation}\label{lyrebird}
\Sigma_0(\alpha,\beta)
\preceq_{\alpha,\beta} 1+
\sum_{h_0,h_1,h_2\sim K} 
\sum_{\substack{U_i,V_i,W_i \lesssim N^2/K^2\;\;(i=0,1,2)  \\   \Delta \neq 0 }}
2^{2j} \left\lvert \Delta  \right\rvert^{-1}.
\end{equation}
How often can $\Delta$ be small? Let
\begin{equation}\label{titmouse}
H=\gcd(h_1^2 U_1 - h_0^2 U_0,  h_1^2 V_1 - h_0^2 V_0) ,\quad G=\gcd(h_1^2,h_0^2U_0,h_0^2V_0),\quad
T=|h_1^2 U_1 - h_0^2 U_0|.
\end{equation}
After permuting $U,V,$ and $W$ if necessary, and also swapping $(U_1,V_1,W_1)$, $(U_2,V_2,W_2)$ if necessary, we will have
\begin{equation}\label{lapwing}
\begin{gathered}
	\frac{\gcd(h_1^2 U_1 - h_0^2 U_0,  h_1^2 W_1 - h_0^2 W_0)}{ \gcd( h_1^2,h_0^2 U_0,h_0^2 W_0)}\geq\frac{\gcd(h_1^2 U_1 - h_0^2 U_0,  h_1^2 V_1 - h_0^2 V_0)}{ \gcd( h_1^2,h_0^2 U_0,h_0^2 V_0)}=\frac{H}{G},
	\\
	\quad
	|h_2^2 U_2 - h_0^2 U_0|\leq T,
\quad
	|h_i^2 V_i - h_0^2 V_0|
\leq T,\quad
	|h_i^2 W_i - h_0^2 W_0|\leq T\quad (i=1,2).
\end{gathered}
\end{equation}
In particular, if $\Delta\neq 0$ then it follows from this that $T>0$ and $H>0$. Observe that
\begin{multline*}
	\Delta = 
	h_2^2
	\begin{pmatrix}
	(h_1^2W_1-h_0^2W_0)V_2+(h_0^2V_0-h_1^2V_1) W_2\\
	(h_1^2U_1-h_0^2U_0) W_2+(h_0^2W_0-h_1^2W_1)U_2\\
	(h_1^2V_1-h_0^2V_0)U_2+(h_0^2U_0-h_1^2U_1) V_2 	
	\end{pmatrix}
	+ Q(h_0,h_1,U_0,U_1,V_0,V_1,W_0,W_1)
\end{multline*}
for some triple of forms $Q$.
Suppose for a moment we are given $h_i\sim K$ ($i=0,1,2$) and $U_i,V_i,W_i$ ($i=0,1$) satisfying \eqref{titmouse} and \eqref{lapwing}. One can deduce from the formula above that
\begin{equation*}
\sum_{\substack{ U_2,V_2,W_2 \lesssim N^2/K^2 \\ \left\lvert h_2^2(U_2, V_2 ,W_2)-h_0^2 (U_0,V_0,W_0)\right\rvert \leq T \\  \lvert \Delta \rvert \leq M }} 1
\lesssim
1+\frac{N^2H}{K^2T}+\frac{M}{K^2}+\frac{N^2HM}{K^4T}+\frac{M^2N^2}{T^2K^6}.
\end{equation*}
In fact, if $M/TK^2\gtrsim 1$, then there are at most $N^2/K^2$ choices for $U_2$; when $U_2$ is fixed, from $|\Delta|\leq M$ we get that there is at most $M/TK^2$ choices for each of $V_2$ and $W_2$. This gives a total of \[\frac{N^2}{K^2}\bigg(\frac{M}{TK^2}\bigg)^2=\frac{M^2N^2}{T^2K^6}\] choices. 

Now if $M/TK^2\ll 1$, then for fixed $U_2$ there is at most $1$ choice for each of $V_2$ and $W_2$. Moreover, as $|\Delta|\leq M$, there are at most $M/K^2+1$ choices for the quantity
\[(h_1^2V_1-h_0^2V_0)U_2+(h_0^2U_0-h_1^2U_1) V_2,
\] so there are at most $M/K^2+1$ choices for the residue
\[U_2\pmod \xi,\quad \xi:=\frac{h_0^2U_0-h_1^2U_1}{\gcd(h_1^2V_1-h_0^2V_0,h_0^2U_0-h_1^2U_1)}.
\] Now $U_2\lesssim N^2/K^2$ and $|\xi|\sim T/H$, so the number of choices for $U_2$ is at most
\[\bigg(\frac{M}{K^2}+1\bigg)\bigg(\frac{N^2H}{K^2T}+1\bigg)\lesssim 1+\frac{N^2H}{K^2T}+\frac{M}{K^2}+\frac{N^2HM}{K^4T}.
\]

Clearly the same bound holds if we permute $U,V$ and $W$ in \eqref{titmouse}, and also if we put $\sim$ in place of the equalities in \eqref{titmouse}. Inserting this into \eqref{lyrebird} we find
\begin{align*}
	\Sigma_0(\alpha,\beta)
	\preceq_{\alpha,\beta}{}&
	1+
	\sum_{\substack{T,M,G,H\in 2^\Z \\ 1\leq T\leq N^2\\ 1\leq M \leq T^2\\ 1\leq G\leq K^2 \\ G\leq H \leq T}}
	\sum_{\substack{h_0,h_2\sim K \\ U_0\lesssim N^2/K^2 }}
		\sum_{\substack{h_1\sim K  \\  \gcd(h_1^2,h_0^2 U_0) \gtrsim G}}
	\sum_{\substack{ U_1\lesssim N^2/K^2  \\ 
	|h_1^2 U_1 - h_0^2 U_0| \sim T }}
\sum_{V_0,W_0\lesssim N^2/K^2 }
		 \\&
	\sum_{\substack{V_1,W_1 \lesssim N^2/K^2  \\ 
		|h_1^2 V_1 - h_0^2 V_0| \lesssim T \\ |h_1^2 W_1 - h_0^2 W_0| \lesssim T\\
	  \gcd(h_1^2 U_1 - h_0^2 U_0,  h_1^2 V_1 - h_0^2 V_0) \sim H \\
	  \gcd( h_1^2,h_0^2 U_0,h_0^2 V_0) \sim G\\
 	\frac{\gcd(h_1^2 U_1 - h_0^2 U_0,  h_1^2 W_1 - h_0^2 W_0)}{ \gcd( h_1^2,h_0^2 U_0,h_0^2 W_0)}\gtrsim H/G
}}
	\sum_{\substack{ U_2,V_2,W_2 \lesssim N^2/K^2 \\ \left\lvert h_2^2(U_2, V_2 ,W_2)-h_0^2 (U_0,V_0,W_0)\right\rvert \leq T \\  \lvert \Delta \rvert \leq M }} 2^{2j}M^{-1}
	\\
	{}
	\preceq{}&
	1+
	\sum_{\substack{T,M,G,H\in 2^\Z \\ 1\leq T\leq N^2\\ 1\leq M \leq T^2\\ 1\leq G\leq K^2 \\ G\leq H \leq T}}
	N^2
	\frac{K}{G^{1/2}} 
	\left(\frac{T}{K^2}+1\right) \frac{N^4}{K^4}
	\\
	&\left(\frac{GT}{HK^2}+1\right)^2
	\left(1+\frac{HN^2}{TK^2} +\frac{M}{K^2}+\frac{N^2HM}{K^4T}+\frac{M^2N^2}{T^2K^6}\right)
	2^{2j}M^{-1}.
\end{align*}
 Here we will explain the bound in the summation in $(V_1,W_1)$; the bounds in all other summations are straightforward. First consider $V_1$; as $(h_0,h_1,U_0,U_1,V_0,W_0)$ is fixed, the four gcd's in the summation have $\preceq 1$ choices, so we may assume they are fixed. Let
\[\xi= \gcd(h_1^2 U_1 - h_0^2 U_0,  h_1^2 V_1 - h_0^2 V_0)\sim H, \qquad\gcd(\xi,h_1^2)=\gcd( h_1^2,h_0^2 U_0,h_0^2 V_0)\sim G,
\]then we have $h_1^2V_1-h_0^2V_0\equiv 0\pmod\xi$. This implies that the residue of $V_1$ modulo $\xi/\gcd(\xi,h_1^2)\sim H/G$ is fixed; as also $|h_1^2 V_1 - h_0^2 V_0| \lesssim T$, the number of choices for $V_1$ will be at most $GT/HK^2+1$. For $W_1$ the bound is similar, except that instead of $\xi/\gcd(\xi,h_1^2)$ we have 
\[\frac{\eta}{\gcd(\eta,h_1^2)}=\frac{\gcd(h_1^2 U_1 - h_0^2 U_0,  h_1^2 W_1 - h_0^2 W_0)}{ \gcd( h_1^2,h_0^2 U_0,h_0^2 W_0)},\qquad \eta:=\gcd(h_1^2 U_1 - h_0^2 U_0,  h_1^2 W_1 - h_0^2 W_0),
\] which is not less than $H/G$ by our assumptions.

Maximising the summand on the right-hand side over $G$ and $M$, we find it is bounded by
$$
N^2
K
\left(\frac{T}{K^2}+1\right) \frac{N^4}{K^4}
\left(\frac{T^2}{H^{1/2}K(H+K^2)^{3/2}}+1\right)
\left(
1+\frac{HN^2}{TK^2}+\frac{N^6}{T^2K^6}
\right)
2^{2j}
$$
and maximising this over $H$, it in turn is bounded by
$$
N^2
K
\left(\frac{T}{K^2}+1\right) \frac{N^4}{K^4}
\left(
\frac{N^2 T^{3/2}}{(T^{3/2}+K^3)K^3}
+\frac{T N^2}{K^5}+\frac{T^2}{K^4}+\frac{N^6}{K^{10}}
+\frac{N^2}{K^2}
+\frac{N^6}{T^2K^6}
\right)
2^{2j}
$$
which, considering the cases when $T\in\{1, K^2, N^2\}$, is bounded by
$$
\bigg(\frac{N^{12}}{K^9}+\frac{N^{14}}{K^{15}}\bigg)2^{2j}\lesssim \frac{N^{12}}{K^9}2^{2j}\lesssim\frac{N^{12}}{K^3}2^{3j}
$$
using the fact that $2^j\geq N^{-3}$ and $N^{1/2}\leq K\leq N$.
\end{proof}
\subsection{Completing the case $p=8$}

Combining  \eqref{sandpiper} with \eqref{flamingo} and \eqref{buzzard} shows that
$$
\lVert e^{it\tilde{\Delta}}f \rVert^8_{L^8([0,T]\times \mathbb{T}^2)}
\lesssim
\lvert S^4\rvert \Big((NT+N) +(N^3 +NT)+(N^4+NT)\Big)
$$
which proves the case $p=8$ of Theorem~\ref{handsaw}.

\section{The counting argument through Pall's bound: $p=10$}

\label{section6}

With the same ansatz for $f$ as in Section~\ref{section4}, we will prove

\begin{thm}
Generically in $(\alpha_{ij})$,
$$
\| e^{it \widetilde{\Delta}} f \|_{L^{10}([0,T]\times \mathbb{T}^2)}^{10}
\lesssim
|S^5|(N^6+TN^2).
$$
\end{thm}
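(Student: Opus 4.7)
The plan is to adapt the strategy of Section~\ref{section5}. With $\alpha=\alpha_{22}$ and $\beta=\alpha_{12}$, the derivation leading to \eqref{jacana} (now applied with $p/2=5$) yields
\[
\|e^{it\widetilde{\Delta}}f\|_{L^{10}([0,T]\times\mathbb{T}^2)}^{10}
\lesssim |S^5|\sum_{1/T<2^j\lesssim N^2} 2^{-j}
\sup_{a,b,\tau}\sum_{\substack{A,B,C\lesssim N^2 \\
|A+B\alpha+C\beta-\tau|<2^j}}
|\Omega^{5,N,+}_{a,b,A,B,C}|.
\]
Since $\sum_{2^j>1/T} 2^{-j}(2^jN^6+N^2)\preceq N^6+TN^2$, it would suffice to prove that, for generic $(\alpha_{ij})$,
\[
\sum_{\substack{A,B,C \\ |A+B\alpha+C\beta-\tau|<2^j}}|\Omega^{5,N,+}_{a,b,A,B,C}|\preceq 2^jN^6+N^2,
\]
uniformly in $a,b,\tau$.

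To prove this, I would fix one vector $v_5=(k_5,\ell_5)\in S$ and reduce to the $p=8$ analysis:
\[
|\Omega^{5,N,+}_{a,b,A,B,C}|\leq \sum_{v_5\in S}|\Omega^{4,N,+}_{a-k_5,\,b-\ell_5,\,A-k_5^2,\,B-\ell_5^2,\,C-k_5\ell_5}|.
\]
The injection \eqref{kookaburra}--\eqref{sing_kookaburra} from Section~\ref{section5} combined with Corollary~\ref{twite} then bounds the summand by $\preceq h^*(v_5)$ in the main case, where $h^*(v_5)$ denotes the largest integer whose square divides $\gcd(A^*,B^*,C^*)$ for
\begin{align*}
A^* &= 16A-4a^2+8ak_5-20k_5^2,\\
B^* &= 16B-4b^2+8b\ell_5-20\ell_5^2,\\
C^* &= 16C-4ab+4bk_5+4a\ell_5-20k_5\ell_5.
\end{align*}
Swapping the order of summation, the claim reduces to two uniform estimates: $\sum_{v_5\in S}h^*(v_5)\preceq N^2$ for each fixed $(A,B,C)$, together with the strip-size bound $\#\{(A,B,C)\in\mathbb{Z}^3\cap [-CN^2,CN^2]^3:|A+B\alpha+C\beta-\tau|<2^j\}\preceq 2^jN^4+1$; the latter follows by a Borel--Cantelli argument for generic $(\alpha,\beta)$, in the spirit of Lemma~\ref{8-variables-singular-case}.

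The key inequality $\sum_{v_5\in S}h^*(v_5)\preceq N^2$ would follow from Abel summation:
\[
\sum_{v_5\in S}h^*(v_5)=\sum_{h=1}^{O(N)}\#\{v_5\in S : h^2\mid\gcd(A^*,B^*,C^*)\}.
\]
The crucial observation is that $A^*$ is a quadratic polynomial in $k_5$ alone and $B^*$ a quadratic polynomial in $\ell_5$ alone. Hence for each $h$ whose square does not divide the leading coefficient $20$, the congruences $h^2\mid A^*$ and $h^2\mid B^*$ have $O(h^\epsilon)$ solutions modulo $h^2$ by the divisor bound (with a $O(h^{1+\epsilon})$ variant when the relevant discriminant vanishes), so that
\[
\#\{v_5\in S : h^2\mid\gcd(A^*,B^*)\}\preceq \bigl(N/h^2+1\bigr)^2 h^{\epsilon};
\]
for the finitely many small $h$ where the leading-coefficient cancellation is not controlled, we use the trivial bound $\#\{v_5\}\leq |S|\leq N^2$ instead. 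Summing over $h\in[1,O(N)]$ yields $\sum_{v_5}h^*(v_5)\preceq N^2$, as claimed.

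The main obstacle will be the singular case $A^*B^*=(C^*)^2$, which requires a separate treatment analogous to that in Subsection~\ref{sec:8_variables_singular_case}: the singular condition pins $v_5$ to a one-dimensional subvariety carrying $\lesssim N$ integer points, on each of which Pall's singular formula gives a count $\lesssim N$, for a total contribution $\lesssim N^2$, which is absorbed in the same bound. Because no delicate trilinear estimate like Lemma~\ref{gazunk} or Lemma~\ref{cinnamologus} is needed---only the elementary divisor bound on a quadratic congruence, supplemented by genericity of $(\alpha,\beta)$---the $p=10$ analysis is indeed much simpler than that of $p=8$.
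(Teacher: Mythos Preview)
Your approach has the right shape (peel off one vector and apply Pall to the remaining four), and this is indeed what the paper does as well. But the paper first performs a rational linear change of variables: after shifting to $a=b=0$, it uses the $\Q$-equivalence $x_1^2+\cdots+x_4^2+(x_1+\cdots+x_4)^2\sim x_1^2+x_2^2+x_3^2+5x_4^2$ to write the system as $\sum_{i\leq 3}\mu_i^2+5x^2=A'$, $\sum_{i\leq 3}\lambda_i^2+5y^2=B'$, $\sum_{i\leq 3}\mu_i\lambda_i+5xy=C'$. The main-term count then goes through the \emph{mixed} equation: for fixed $h$ and fixed $(A',B',C')$, first let $W$ range ($\preceq N^2/h^2$ choices), read off $xy=(C'-h^2W)/5$, and get $(x,y)$ in $\preceq 1$ ways by the divisor bound; this gives the crucial $\preceq N^2/h^2$, whence $\sum_h h\cdot N^2/h^2\preceq N^2$.

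Your execution bypasses the diagonalization and tries to control $\sum_{v_5}h^*(v_5)$ through the two \emph{diagonal} congruences $h^2\mid A^*(k_5)$ and $h^2\mid B^*(\ell_5)$ alone. This has a real gap. The number of roots of a monic quadratic modulo $h^2$ is not $O(h^\epsilon)$ uniformly: for $x^2\equiv D\pmod{h^2}$ one only has $\preceq\sqrt{\gcd(D,h^2)}$ solutions, which can be as large as $h$ whenever $h^2\mid D$, not merely when $D=0$. Concretely, the discriminant of $A^*$ is $256(5A-a^2)$, and for any fixed $a$ there are many pairs $(A,h)$ with $h^2\mid 5A-a^2$ and $h$ large. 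Even granting the corrected layer-cake identity $h^*=\sum_{h:\,h^2\mid\gcd}\phi(h)$ (your displayed equality is false as written), the best uniform bound one can extract from $A^*,B^*$ alone is of the type
\[
\#\{v_5:\ h^2\mid A^*,\ h^2\mid B^*\}\ \lesssim\ (N^2/h^2+1)^2,
\]
coming from counting attained values; summing $h\cdot(N^2/h^2+1)^2$ over $h\le N$ already produces a term of size $N^4$, which is fatal. The point is that $C^*$ must be used, and in your parametrization $C^*=-20k_5\ell_5+4bk_5+4a\ell_5+16C-4ab$ carries linear cross terms that prevent the clean ``read off $xy$, apply the divisor bound'' step. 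The paper's diagonalization is precisely what strips those linear terms and makes the $xy$ argument work.

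A secondary issue: your treatment of the singular locus $A^*B^*=(C^*)^2$ is too optimistic. This locus is a (possibly degenerate) conic in $(k_5,\ell_5)$, and the blanket claim ``$\lesssim N$ integer points'' is not automatic; in the paper the analogous degenerate case is dispatched differently, by parametrising the rank-one solutions as $\mu_i=pz_i$, $\lambda_i=qz_i$ and then bounding a concrete Diophantine count by the divisor bound.

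In short, the strategy is correct but the implementation needs the diagonalization (or an equivalent device that makes the mixed constraint usable); without it the key inequality $\sum_{v_5}h^*(v_5)\preceq N^2$ does not follow from the diagonal congruences alone.
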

This implies Theorem~\ref{thmp8} for $p=10$.

\bigskip

Following the same argument as in Section~\ref{section5}, we have by \eqref{jacana} that
$$
\| e^{it \widetilde{\Delta}} f \|_{L^p([0,T])}^{10}\lesssim
\sum_{2^j > \frac{1}{T}} 2^{-j} \lvert S^5\rvert \sup_{a,b\lesssim N, \tau\lesssim N^2 }\sum_{\substack{A_1,B_1,C_1 \lesssim N^2 \\ |A_1 + B_1 \alpha + C_1 \beta-\tau| < 2^{j}}}  \lvert\Omega^{5,N,+}_{a,b,A_1,B_1,C_1}\rvert
$$
where $\Omega^{5,N,+}_{a,b,A_1,B_1,C_1}$ is the set of solutions of size less than $N$ to 
$$
\sum_{i=1}^5 k_i = a, \;\; \sum_{i=1}^5 \ell_i = b, \;\; \sum_{i=1}^5 k_i^2 = A_1, \;\;\sum_{i=1}^5 \ell_i^2 = B_1,\;\;  \sum_{i=1}^5 k_i \ell_i = C_1.
$$
After shifting the variables we can assume $a,b=0$. Over $\Q$ the form $x_1^2+x_2^2+x_3^2+x_4^2+(x_1+x_2+x_3+x_4)^2$ is equivalent to $x_1^2+x_2^2+x_3^2+5x_4^2$, {namely
\begin{multline*}x_1^2+x_2^2+x_3^2+x_4^2+(x_1+x_2+x_3+x_4)^2\\=(x_1+x_2+x_4/2)^2+(x_2+x_3+x_4/2)^2+(x_1+x_3+x_4/2)^2+5(x_4/2)^2.\end{multline*}} So under some linear change of variables, each integral solution to the equations above gives us an integral solution to
$$
\sum_{i=1}^3 k_i^2+5k_4^2 = A', \;\;\sum_{i=1}^3 \ell_i^2+5\ell_4^2 = B',\;\;  \sum_{i=1}^3 k_i \ell_i +5k_4\ell_4= C'
$$
for some fixed, integral $A',B',C'$. That is,
\begin{align*}
\| e^{it \widetilde{\Delta}} f \|_{L^{10}([0,T])}^{10}
&\lesssim
\sum_{2^j > \frac{1}{T}} 2^{-j} \lvert S^5\rvert \sup_{\tau'\lesssim N^2 }
\sum_{\substack{A',B',C' \lesssim N^2 \\ |A' + B' \alpha + C' \beta-\tau'| < 2^{j}  }}
\sum_{\substack{ \sum_{i=1}^3 \mu_i^2 +5\mu_4^2 = A',\\\sum_{i=1}^3 \lambda_i^2+5\lambda_4^2 = B',\\  \sum_{i=1}^3 \mu_i \lambda_i +5\mu_4\lambda_4= C' }} 1
\\
& \preceq 
\sum_{2^j > \frac{1}{T}} 2^{-j} \lvert S^5\rvert \sup_{\tau'\lesssim N^2 }
\sum_{\substack{A',B',C' \lesssim N^2 \\ |A' + B' \alpha + C' \beta-\tau'| < 2^{j}  }}
\sum_{x,y\lesssim N}
\sum_{\substack{ \sum_{i=1}^3 \mu_i^2 = A'-5x^2,\\\sum_{i=1}^3 \lambda_i^2 = B'-5y^2,\\  \sum_{i=1}^3 \mu_i \lambda_i = C'-5xy }} 1.
\end{align*}
The idea is to use Pall's result to bound the innermost sum. First we deal with degenerate cases. The first degenerate case is when $A'=5x^2$ or $B'=5y^2$; without loss of generality we treat $B'=5y^2$ which gives a contribution of
$$
\sum_{(m,x,y):\lvert m + 5x^2+ 5y^2 \alpha+ 5xy\beta - \tau' \rvert<2^j}
\sum_{(\mu_i):\sum_{i=1}^3 \mu_i^2 = m} 1
\preceq
(2^jN^4+1) N,
$$
by the Diophantine condition $|i+j\alpha+k\beta | \gtrsim N^{-4}$ if $i,j,k \lesssim N^2$, with $(i,j,k) \neq (0,0,0)$. This is more than satisfactory. 

Next we consider the contribution from $(\sum_{i=1}^3 \mu_i^2)(\sum_{i=1}^3 \lambda_i^2)=(\sum_{i=1}^3 \mu_i\lambda_i)^2\neq 0$. In this case there are unique $z_i,p,q$ with $p,q\in\N$ and $\gcd(p,q)=1$ such that $\mu_i=pz_i$, $\lambda_i=qz_i$. These terms give a contribution of
\begin{multline*}
\sum_{\substack{A',B',C' \lesssim N^2 \\ |A' + B' \alpha + C' \beta-\tau'| < 2^{j}  }}
\sum_{\substack{mp^2+5x^2=A',\\ mq^2+5y^2=B',\\ mpq+5xy = C'}}
\sum_{\sum_{i=1}^3 z_i^2 = m} 1
\\
\lesssim
(2^jN^4+1)N\sup_{A',B',C' \lesssim N^2}
\lvert\{ (m,p,q,x,y) : mp^2+5x^2=A',\; mq^2+5y^2=B',\; mpq+5xy = C' \}\rvert.
\end{multline*}
We claim the last supremum is $ \preceq N$. Indeed we can choose $x$ arbitrarily, get $\preceq 1$ choices for $m$ and $p$ by the divisor bound, and then as $mp\neq 0$ by assumption, we get that $(q,y)$ lies on the intersection of an ellipse and a line and we're done. This gives us a contribution of $\lesssim (2^jN^4+1) N^2$, as desired.

Finally we treat the case $0\neq (\sum_{i=1}^3 \mu_i^2)(\sum_{i=1}^3 \lambda_i^2)\neq(\sum_{i=1}^3 \mu_i\lambda_i)^2$. By Pall, the contribution is
\begin{equation*}
\sum_{\substack{A',B',C' \lesssim N^2 \\ |A' + B' \alpha + C' \beta-\tau'| < 2^{j}  }}\sum_{h\lesssim N}\sum_{\substack{ h^2U+5x^2=A\\ h^2V+5y^2=B\\ h^2W+5xy = C }} h.
\end{equation*}
We get
$$
\lvert\{ (U,V,W,x,y) : h^2U+5x^2=A,\; h^2V+5y^2=B,\; h^2W+5xy = C \}\rvert
\preceq \frac{N^2}{h^2}
$$
by {choosing the values of $W$ and $xy$, and then using the divisor bound. Note that if $xy=0$, say $x=0$, then $(U,W)$ is uniquely fixed, and we can get the same bound by choosing the values of $V$ and $y^2$.} This gives us
\begin{equation*}
\sum_{\substack{A',B',C' \lesssim N^2 \\ |A' + B' \alpha + C' \beta-\tau'| < 2^{j}  }}\sum_{h\lesssim N}\sum_{\substack{ h^2U+5x^2=A\\ h^2V+5y^2=B\\ h^2W+5xy = C }} h
\preceq
(2^jN^4+1) N^2
\end{equation*}
and we're done.

\appendix
{\section{Higher dimensions: the result of Guo and Zhang}}

{
The case $k=2$ of the main result of Guo and Zhang~\cite{GZ} is
\begin{multline*}
\idotsint\limits_{\substack{0\leq\beta_{i}\leq 1\\0\leq\alpha_{ij}\leq 1}} \bigg\lvert
\sum_{\substack{ x \in \mathbb{Z}^d \\ |x|\leq M }}
e\Big( \sum_{i,j=1}^d \alpha_{ij} x_i x_j + \sum_{i=1}^d \beta_i x_i\Big)\bigg\rvert^{2s} \,d\alpha_{11} \, d\alpha_{12}\dotsm d\alpha_{dd} \,d\beta_1\dotsm d\beta_d
\\
\lesssim_\epsilon
N^\epsilon
\bigg(
N^{ds}
+\sum_{j=1}^d
N^{(2s-1)j+d-j(j+2)}
\bigg),
\end{multline*}
and the right-hand side is seen to be $\lesssim N^\epsilon ( N^{ds}+ N^{2ds-d(d+2)})$. 
We sketch the argument by which \eqref{jubjub} follows from this  bound. More precisely, if we define
\begin{equation*}
	Z(M)=
\#\Big\{
x^{(1)},\dotsc,x^{(2s)}
\in \mathbb{Z}^d
:
|x^{(i)}|\leq M,
\,
\sum_{i=1}^{2s} (-1)^i x^{(i)}_j x^{(i)}_k=0
\;\; (1\leq j,k\leq d)
\Big\},
\end{equation*}
then we have
\begin{equation}
	\label{bat}
	Z(M)
\lesssim_\epsilon
M^{ds+d+\epsilon}+ M^{2ds-d(d+1)+\epsilon}.
\end{equation}
Indeed this follows from the result of Guo-Zhang on summing over the possible values of $\sum_{i=1}^{2s} (-1)^i x^{(i)}$. We will deduce \eqref{jubjub} from this last estimate.
 }

{
For generic $(\alpha_{ij})$ we have
\begin{equation}\label{greatauk}
I_s(T) \lesssim_{\alpha} N^\epsilon T^{\epsilon} \idotsint\limits_{1\leq\alpha_{ij}\leq 2} I_s(T)\,d\alpha_{11} \, d\alpha_{12}\dotsm d\alpha_{dd},
\end{equation}
by estimating the measure of the set of exceptional $\alpha$ for which this fails, and taking the union over  $T,N$ of the form $2^k$.
 For $T\geq 2$ the right-hand side above is
\begin{equation}
\asymp
N^\epsilon
T^{1+\epsilon}
\idotsint\limits_{0\leq\alpha_{ij}\leq 1}
\sup_{x\in \T^d}\left\lvert
K^{(d)}_N(1,x)\right\rvert^{2s} \, d\alpha_{11} \, d\alpha_{12}\dotsm d\alpha_{dd},
\label{hamerkop}
\end{equation}
using the fact that the last integrand is 1-periodic in each $\alpha_{ij}$. Now, by the same argument as \eqref{bowerbird}, we have
\[
|K_N^{(d)}(1,x)|^2 \preceq N^d \left[ \# \{ n \in [-4N,4N]^d \;\; \mbox{s.t.} \;\; \| L_j(n) \| < \frac{2}{N} \} + 1 \right].
\]
The idea is that we can bound the right-hand side by some kind of exponenetial sum which looks very much like $|K_N^{(d)}(0,0)|^2$.
To motivate this strategy requires some inspection of the proof of \eqref{bowerbird}, but the argument itself is short. We observe that the right-hand side in the last display is
\[
\lesssim
N^d \sum_{{w},{n}\in \Z^d} e^{-\frac{2}{N^2}|n|^2-\frac{N^2}{8}\left|w-4( \alpha_{ij})n \right|^2}
\]
and, by Poisson summation in $w$, this is 
\[
=
2^d
\sum_{n,m \in \mathbb{Z}^d }
e^{-\frac{2}{N^2}|n|^2-\frac{2}{N^2}|m|^2}
e\bigg(-4  \sum_{ij} \alpha_{ij} n_im_j \bigg).
\]
Changing variables to $x=n-m, y=n+m$ we have shown that
\[
\sup_{x\in \T^d}\left\lvert
K^{(d)}_N(1,x)\right\rvert^2
\preceq
\sum_{\substack{ x,y \in \mathbb{Z}^d \\ x\equiv y \,(2)} }
e^{-\frac{|x|^2+|y|^2}{N^2}}
e\bigg( \sum_{ij} \alpha_{ij} x_i x_j - \sum_{ij} \alpha_{ij} y_i y_j  \bigg).
\]
Substituting this into \eqref{hamerkop} we obtain
\begin{align*}
I_s(T)
&\preceq_{\alpha}
T
\sum_{\substack{ x^{(i)},y^{(i)} \in \mathbb{Z}^d ,\, x^{(i)}\equiv y^{(i)} \,(2) \\ \sum_{i=1}^s x^{(i)}_jx^{(i)}_k = \sum_{i=1}^s y^{(i)}_jy^{(i)}_k  \, (1\leq j,k\leq d) } }
e^{-\sum_i\frac{|x^{(i)}|^2+|y^{(i)}|^2}{N^2}}
\\&\lesssim
T\sum_{\substack{M\in 2^\Z\\ M \geq N}}
e^{-M^2/N^2} Z(M).
\end{align*}
This together with \eqref{bat} proves \eqref{jubjub}. 
}

We would have an optimal bound for $Z(M)$ if we could replace the $ds+d$ in \eqref{bat}  and hence \eqref{jubjub} with $ds$. This would be predicted by a square-root cancellation heuristic standard in the circle method, and it seems likely that it can be proved as follows. We can intepret $Z(M)$ as a count of $d$-tuples of integer points $y\in \Z^{2s}$ with norm at most $M$ and whose span $V$ is contained in the hypersurface $\sum (-1)^i y^{(i)}=0$. Given $\dim V$, we can count the possible spans $V$, for example using the work of Franke-Manin-Tschinkel~\cite{FMT}, and then we can count $d$-tuples of points in each space $V$.

\end{document}